\newtheorem{theorem}{Theorem}[section]
\newtheorem{lemma}[theorem]{Lemma}
\newtheorem{proposition}[theorem]{Proposition}
\newtheorem{corollary}[theorem]{Corollary}
\newtheorem{remark}[theorem]{Remark}
\newcommand{\mc}[1]{{\mathcal #1}}
\newcommand{\mf}[1]{{\mathfrak #1}}
\newcommand{\bb}[1]{{\mathbb #1}}
\newcommand{\bs}[1]{{\boldsymbol #1}}
\newcommand{\<}{\langle}
\renewcommand{\>}{\rangle}
\newcommand{\x}{\!\otimes}
\begin{document}

\title[$W$-Sobolev spaces]{$W$-Sobolev spaces: Theory, Homogenization and Applications}

\author [A.B. Simas] {Alexandre B. Simas}
\author [F.J. Valentim]{Fábio J. Valentim}
\address{\hfill\break\indent
IMPA \hfill\break\indent
Estrada Dona Castorina 110, \hfill\break\indent
J. Botanico, 22460 Rio
de Janeiro, Brazil\\
Tel: +55 21 2529 5215}
\email{alesimas@impa.br\and valentim@impa.br}

\thanks{Research supported by CNPq}

\noindent\keywords{Sobolev spaces, Elliptic equations, Parabolic equations, Homogenization, Hydrodynamic limit}

\subjclass[2000]{46E35, 35J15, 35K10, 35B27, 35K55}

\begin{abstract}
Fix strictly increasing right continuous functions with left limits $W_i:\bb R \to \bb R$, $i=1,\ldots,d$, and let $W(x) = \sum_{i=1}^d W_i(x_i)$ for $x\in\bb R^d$. We construct the $W$-Sobolev spaces, which consist of functions $f$ having weak generalized gradients $\nabla_W f = (\partial_{W_1} f,\ldots,\partial_{W_d} f)$. Several properties, that are analogous to classical results on Sobolev spaces, are obtained. $W$-generalized elliptic and parabolic equations are also established, along with results on existence and uniqueness of weak solutions of such equations. Homogenization results of suitable random operators are investigated. Finally, as an application of all the theory developed, we prove a hydrodynamic limit for gradient processes with conductances (induced by $W$) in random environments.
\end{abstract}

\maketitle

\section{Introduction}
\label{sec1}
The space of functions that admit differentiation in a weak sense has been widely studied in the mathematical literature. The usage of such spaces provides a wide application to the theory of partial differential equations (PDE), and to many other areas of pure and applied mathematics.
These spaces have become associated with the name of the late Russian mathematician S. L. Sobolev, although their origins predate his major contributions to their development in the late 1930s. In theory of PDEs, the idea of Sobolev space allows one to introduce the notion of weak solutions whose existence, uniqueness, regularities, and well-posedness are based on tools of functional analysis. 

In classical theory of PDEs, two important classes of equations are: elliptic and parabolic PDEs. They are second-order PDEs, with some constraints (coerciveness) in the higher-order terms. The elliptic equations typically model the flow of some chemical quantity within some region, whereas the parabolic equations model the time evolution of such quantities. Consider the following particular classes of elliptic and parabolic equations:
\begin{equation}\label{elipara}
\sum_{i=1}^d \partial_{x_i}\partial_{x_i} u(x) = g(x),\quad\hbox{and}\quad \left\{\begin{array}{c}
\partial_tu(t,x) = \sum_{i=1}^d \partial_{x_i}\partial_{x_i} u(t,x),\\
u(0,x) = g(x),
\end{array}\right.
\end{equation}
for $t\in(0,T]$ and $x\in D$, where $D$ is some suitable domain, and $g$ is a function. Sobolev spaces are the natural environment to treat equations like \eqref{elipara} - an elegant exposition of this fact can be found in \cite{E}.

Consider the following generalization of the above equations:
\begin{equation}\label{elipara2}
\sum_{i=1}^d \partial_{x_i}\partial_{W_i} u(x) = g(x),\quad\hbox{and}\quad \left\{\begin{array}{c}
\partial_tu(t,x) = \sum_{i=1}^d \partial_{x_i}\partial_{W_i} u(t,x),\\
u(0,x) = g(x),
\end{array}\right.
\end{equation}
where $\partial_{W_i}$ stands for the generalized derivative operator, where, for each $i$, $W_i$ is a one-dimensional strictly increasing (not necessarily continuous) function. Note that if $W_i(x_i) = x_i$, we obtain the equations in \eqref{elipara}. This notion of generalized derivative has been studied by several authors in the literature, see for instance, \cite{dyn2,feller,lo1,m,mckean}. We also call attention to \cite{dyn2} since it provides a detailed study of such notion.  The equations in \eqref{elipara2} have the same physical interpretation as the equations in \eqref{elipara}. However, the latter covers more general situations. For instance, \cite{TC} and \cite{v} argue that these equations may be used to model a diffusion of particles within a region with membranes induced by the discontinuities of the functions $W_i$. Unfortunately, the standard Sobolev spaces are not suitable for being used as the space of weak solutions of equations in the form of \eqref{elipara2}.

One of our goals in this work is to define and obtain some properties of a space, which we call $W$-\emph{Sobolev space}. This space lets us formalize a notion of weak generalized derivative in such a way that, if a function is $W$-differentiable in the strong sense, it will also be differentiable in the weak sense, with their derivatives coinciding. Moreover, the $W$-Sobolev space will coincide with the standard Sobolev space if $W_i(x_i) = x_i$ for all $i$. With this in mind, we will be able to define weak solutions of equations in \eqref{elipara2}. We will prove that there exist weak solutions for such equations, and also, for some cases, the uniqueness of such weak solutions. Some analogous to classical results of Sobolev spaces are obtained, such as Poincaré's inequality and Rellich-Kondrachov's compactness theorem.

Besides the treatment of elliptic and parabolic equations in terms of these $W$-Sobolev spaces, we are also interested in studying \textit{Homogenization} and \textit{Hydrodynamic Limits}. The study of homogenization is motivated by several applications in mechanics, physics, chemistry and engineering. For example, when one studies the thermal or electric conductivity in heterogeneous materials, the macroscopic properties of crystals or the structure of polymers, are typically described in terms of linear or non-linear PDEs for medium with periodic or quasi-periodic structure, or, more generally, stochastic. 

We will consider stochastic homogenization. In the stochastic context, several works on homogenization of operators with random coefficients have been published (see, for instance, \cite{papa,pr} and references therein). In homogenization theory, only the stationarity of such random field is used. The notion of stationary random field is formulated in such a manner that it covers many objects of non-probabilistic nature, e.g., operators with periodic or quasi-periodic coefficients.

The focus of our approach is to study the asymptotic behavior of effective coefficients for a family of random difference schemes, whose coefficients can be obtained by the discretization of random high-contrast lattice structures. In this sense, we want to extend the theory of homogenization of random operators developed in \cite{pr}, as well as to prove its main Theorem (Theorem 2.16) to the context in which we have weak generalized derivatives.

Lastly, as an application of all the theory developed for $W$-Sobolev spaces, elliptic operators, parabolic equations and homogenization, we prove a hydrodynamic limit for \textit{gradient processes with conductances in random environments}. Hydrodynamic limit for gradient processes with conductances have been obtained in \cite{TC} for the one-dimensional setup and in \cite{v} for the $d$-dimensional setup. However, with the tools developed in our present article, the proof of the hydrodynamic limit on a more general setup (in random environments) turns out to be simpler and much more natural. Furthermore, the proof of this hydrodynamic limit also provides an existence theorem for the generalized parabolic equations such as the one in \eqref{elipara2}.

The hydrodynamic limit allows one to obtain a description of the thermodynamic characteristics (e.g., temperature, density, pressure, etc.) of infinite systems assuming that the underlying dynamics is stochastic and follows the statistical mechanics approach introduced by Boltzmann. More precisely, it allows one to deduce the macroscopic behavior of the system from the microscopic interaction among particles. We will consider a microscopic dynamics consisting of random walks on the lattice submitted to some local interaction, the so-called interacting particle systems introduced by Spitzer \cite{spitzer}, see also \cite{liggett}. Therefore, this approach justifies rigorously a method often used by physicists to establish the partial differential equations that describe the evolution of the thermodynamic characteristics of a fluid, and thus, the existence of weak solutions of such PDEs can be viewed as one of the goals of the hydrodynamic limit.

The random environment we considered is governed by the coefficients of the discrete formulation of the model (the process on the lattice). It is possible to obtain other formulations of random environments, for instance, in \cite{fjl} they proved a hydrodynamic limit for a gradient process with conductances in a random environment whose randomness consists of the random choice of the conductances. The hydrodynamic limit for a gradient process without conductances on the random environment we are considering was proved in \cite{gj}. We would like to mention that in \cite{f} a process evolving on a percolation cluster (a lattice with some bonds removed randomly) was considered and the resulting process turned out to be non-gradient. However, the homogenization tools facilitated the proof of the hydrodynamic limit, which made the proof much simpler than the usual proof of hydrodynamic limit for non-gradient processes (see for instance \cite[Chapter 7]{kl}).

We now describe the organization of the article. In Section \ref{sec2} we define the $W$-Sobolev spaces and obtain some results, namely, approximation by smooth functions, Poincaré's inequality, Rellich-Kondrachov theorem (compact embedding), and a characterization of the dual of the $W$-Sobolev spaces. In Section \ref{sec3} we define the $W$-generalized elliptic equations, and what we call by weak solutions. We then obtain some energy estimates and use them together with Lax-Milgram's theorem to conclude results regarding existence, uniqueness and boundedness of such weak solutions. In Section \ref{sec4} we define the $W$-generalized parabolic equations, their weak solutions, and prove uniquenesses of these weak solutions. Moreover, a notion of energy is also introduced in this Section. Section \ref{sec5} consists in obtaining discrete analogous results to the ones of the previous sections. This Section serves as preamble for the subsequent sections. In Section \ref{sec6} we define the random operators we are interested and obtain homogenization results for them. Finally, Section \ref{aplicacao-limite} concludes the article with an application that is interesting for both probability and theoretical physics, which is the hydrodynamic limit for a gradient process in random environments. This application uses results from all the previous sections and provides a proof for existence of weak solutions of $W$-generalized parabolic equations.

\section{$W$-Sobolev spaces}
\label{sec2}
This Section is devoted to the definition and derivation of properties of the $W$-Sobolev spaces. We begin by introducing some notation, stating some known results, and giving a precise definition of these spaces in subsection 2.2. Subsection 2.3 contains the proof of an approximation result. Poincaré's inequality, Rellich-Kondrachov theorem and a characterization of the dual space of these Sobolev spaces are also obtained. 

Denote by $\bb T^d = ({\bb R}/{\bb Z})^d = [0, 1)^d$ the $d$-dimensional torus, and by
$\bb T^d_N=(\bb Z/N\bb Z)^d = \{0,\ldots,N-1\}^d$ the $d$-dimensional discrete torus with $N^d$
points.

Fix a function $W: \bb R^d \to \bb R$ such that
\begin{equation}
\label{w}
W(x_1,\ldots,x_d) = \sum^d_{k=1}W_k(x_k),
\end{equation}
where each $W_k: \bb R \to \bb R$ is a \emph{strictly increasing} right continuous function with left
limits (c\`adl\`ag),
 periodic in the sense that for all $u\in \bb R$
 $$W_k(u+1) - W_k(u) = W_k(1) - W_k(0).$$  \\
 
Define the generalized derivative $\partial_{W_k}$ of a function $f:\bb T^d \to \bb R$ by
\begin{equation}
\label{f004}
\partial_{W_k} f (x_1,\!\ldots\!,x_k,\ldots, x_d) = \lim_{\epsilon\rightarrow 0}
\frac{f(x_1,\!\ldots\!,x_k +\epsilon,\ldots, x_d)
-f(x_1,\!\ldots\!,x_k,\!\ldots\!, x_d)}{W_k(x_k+\epsilon) -W_k(x_k)}\;,
\end{equation} 
when the above limit exists and is finite. If for a function $f:\bb T^d\to\bb R$ the generalized derivatives $\partial_{W_k}$ exist for all $k$, denote the generalized gradient of $f$ by
$$\nabla_W f = \left(\partial_{W_1}f,\ldots,\partial_{W_d}f\right).$$

Consider the operator $\mc L_{W_k}: \mc D_{W_k}\subset L^2(\bb T) \rightarrow \bb R$ given by
\begin{equation}
\label{f008}
\mc L_{W_k}f\;=\;  \partial_{x_k} \, \partial_{W_k} \, f,\;
\end{equation}
whose domain $\mc D_{W_k}$ is completely characterized in the following proposition:
\begin{proposition}
\label{dominiodw}
The domain $\mc D_{W_k}$ consists of all functions $f$ in $L^2(\bb T)$ such
that
\begin{equation*}
f(x) \;=\; a \;+\; b W_k(x) \;+\; \int_{(0,x]} W_k(dy) \int_0^y \mf f(z) \, dz
\end{equation*}
for some function $\mf f$ in $L^2(\bb T)$ that satisfies
\begin{equation*}
\int_0^1 \mf f(z) \, dz \;=\; 0\quad\hbox{~and~} \quad
\int_{(0,1]} W_k(dy) \Big\{ b + \int_0^y \mf f(z) \, dz \Big\} \;=\;0\; .
\end{equation*}
\end{proposition}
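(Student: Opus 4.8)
The plan is to prove both inclusions by integrating the defining identity $\partial_{x_k}\partial_{W_k} f = \mf f$ twice: once against Lebesgue measure $dz$ and once against the Lebesgue--Stieltjes measure $W_k(dy)$. The two integral side-conditions will turn out to be exactly the requirements that $\partial_{W_k} f$ and $f$ descend to periodic functions on the torus $\bb T$, i.e.\ the self-adjoint boundary conditions appropriate to $\mc L_{W_k}$.

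For necessity, suppose $f \in \mc D_{W_k}$ and set $\mf f := \mc L_{W_k} f \in L^2(\bb T)$. Since $\mf f = \partial_{x_k}(\partial_{W_k} f)$, a first integration in the $x_k$ variable gives $\partial_{W_k} f(x) = b + \int_0^x \mf f(z)\,dz$, where $b := \partial_{W_k} f(0)$. Because $\partial_{W_k} f$ must be well defined on $\bb T$, it has to be periodic, and comparing the identity at $x=0$ and $x=1$ forces $\int_0^1 \mf f(z)\,dz = 0$, the first constraint. I would then recover $f$ from $\partial_{W_k} f$ by integrating against $W_k(dy)$, using the fundamental-theorem-of-calculus statement for the $W_k$-derivative: from $df = (\partial_{W_k} f)\,dW_k$ one obtains $f(x) = a + \int_{(0,x]} W_k(dy)\,\{b + \int_0^y \mf f(z)\,dz\} = a + b\,W_k(x) + \int_{(0,x]} W_k(dy)\int_0^y \mf f(z)\,dz$ after absorbing $-b\,W_k(0)$ into the constant $a$. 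Finally, periodicity $f(0)=f(1)$ on $\bb T$, together with $\int_{(0,1]} W_k(dy) = W_k(1)-W_k(0)$, yields the second constraint.

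For sufficiency, given $\mf f \in L^2(\bb T)$ satisfying the two conditions, I would define $f$ by the displayed formula and verify that $f \in \mc D_{W_k}$ with $\mc L_{W_k} f = \mf f$. Writing $G(y) := \int_0^y \mf f(z)\,dz$, which is continuous, the increment $f(x+\epsilon)-f(x) = b\,\{W_k(x+\epsilon)-W_k(x)\} + \int_{(x,x+\epsilon]} G(y)\,W_k(dy)$ divided by $W_k(x+\epsilon)-W_k(x)$ converges, as $\epsilon \to 0$, to $b + G(x)$, since $G$ is continuous and the total mass of $W_k(dy)$ over $(x,x+\epsilon]$ is precisely the denominator; hence $\partial_{W_k} f(x) = b + G(x)$ and $\partial_{x_k}\partial_{W_k} f = \mf f \in L^2(\bb T)$. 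The first constraint makes $\partial_{W_k} f$ periodic and the second makes $f$ periodic, so both are genuine functions on $\bb T$ and $f \in L^2(\bb T)$, placing $f$ in $\mc D_{W_k}$.

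The main obstacle I anticipate is making the two integration steps rigorous in the c\`adl\`ag setting, that is, establishing the fundamental theorem of calculus for the generalized derivative $\partial_{W_k}$ when $W_k$ carries atoms. One must check that the pointwise difference-quotient limit in \eqref{f004} is compatible with reconstructing $f$ as a Stieltjes integral against $W_k(dy)$, handling the jumps of $W_k$ correctly and confirming that the recovered density $\mf f$ indeed lies in $L^2(\bb T)$; the boundary bookkeeping that produces the two side conditions also requires care, since $W_k$ itself is not periodic, only its increments are.
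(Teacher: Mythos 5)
First, note that the paper does not actually prove Proposition \ref{dominiodw}: it is quoted from \cite{TC}, where (following the classical treatment of Feller's generalized second-order operators in \cite{feller,m,dyn2}) the domain $\mc D_{W_k}$ is essentially \emph{defined} by the integral representation, so the ``necessity'' half of your argument has no counterpart in the source. Your sufficiency half is sound and is the standard computation: with $G(y)=\int_0^y \mf f(z)\,dz$ continuous, the difference quotient of $f$ converges to $b+G(x)$ from both sides (the one-sided limit through an atom of $W_k$ also gives $G(x)$ because the atom's mass appears in both numerator and denominator), the two side conditions are exactly periodicity of $\partial_{W_k}f$ and of $f$, and $\partial_{x_k}\partial_{W_k}f=\mf f\in L^2(\bb T)$.

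The necessity half, however, contains a real gap that you flag but do not close, and it is the entire mathematical content of the statement once $\mc D_{W_k}$ is read as ``all $f$ for which the two pointwise limits exist and $\partial_{x_k}\partial_{W_k}f\in L^2(\bb T)$''. Your ``first integration'' asserts $\partial_{W_k}f(x)=b+\int_0^x\mf f(z)\,dz$; this is a fundamental-theorem-of-calculus statement and does not follow from $\partial_{x_k}(\partial_{W_k}f)=\mf f$ holding almost everywhere (the Cantor function is differentiable a.e.\ with derivative $0$ yet is not constant). One needs either everywhere-differentiability of $\partial_{W_k}f$ combined with the classical theorem that an everywhere-differentiable function with integrable derivative is absolutely continuous, or one must build absolute continuity into the definition of the domain. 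The ``second integration'', $f(x)-f(0)=\int_{(0,x]}\partial_{W_k}f\,dW_k$, is a second FTC-type statement, now for the Stieltjes derivative $d/dW_k$, and is likewise nontrivial when $W_k$ has atoms; pointwise existence of the limit in \eqref{f004} at every $x$ does not by itself reconstruct $f$ as a Stieltjes integral. Either prove these two lemmas or, as in \cite{TC}, take the representation as the definition of $\mc D_{W_k}$, in which case only your (correct) sufficiency computation is needed to check that $\mc L_{W_k}$ is well defined there.
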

The proof of Proposition \ref{dominiodw} and further details can be found in \cite{TC}.  Furthermore, they also proved that these operators have a  countable complete orthonormal system of eigenvectors, which we denote by $\mc A_{W_k}$. Then, following \cite{v},
$$\mc A_W\;=\;\{f: \bb T^d \rightarrow \bb R;f(x_1,\ldots , x_d)=\prod^{d}_{k=1}{ f_k(x_k)}, f_k \in  \mc A_{W_k } \},$$
where $W$ is given by \eqref{w}. 

We may now build an operator analogous to $\mc L_{W_k}$ in $\bb T^d$. For a given set $\mc A$, we denote by $span(\mc A)$ the linear subspace generated by $\mc A$. Let $\bb D_W = span(\mc A_W)$, and define the operator $\bb L_W:\bb D_W \to L^2(\bb T^d)$ as follows: for $f =\prod^d_{k=1}{f_k}\in \mc A_W$,
\begin{equation}
\label{eq31}
\bb L_W(f)(x_1,\ldots x_d)=\sum^{d}_{k =1} \prod^{d}_{j=1, j\neq k}
{f_j(x_j)}\mc L_{W_k}f_k(x_k),
\end{equation}
and extend to $\bb D_W$ by linearity. It is easy to see that if $f\in \bb D_W$ 
\begin{equation}
\label{f002}
\bb L_W f=\sum^{d}_{k =1}\mc L_{W_k}f,
\end{equation}
where the application of $\mc L_{W_k}$ on a function $f:\bb T^d\to\bb R$ is the natural one, i.e., it considers $f$ only as a function of the $k$th coordinate, and keeps all the remaining coordinates fixed. 

Let, for each $k=1,\ldots,d$, $f_k\in \mc A_{W_k}$ be an eigenvector
of $\mc L_{W_k}$ associated to the eigenvalue $\lambda _k$. Then $f=\prod^d_{k=1}{f_k}$ belongs to $\bb D_W$ and is an eigenvector of
 $\bb L_W$ with eigenvalue $\sum^d_{k=1}{\lambda _k}$. Moreover, \cite{v} proved the following result: 
 
\begin{lemma}
\label{f17}
The following statements hold:

\renewcommand{\theenumi}{\alph{enumi}}
\renewcommand{\labelenumi}{{\rm (\theenumi)}}

\begin{enumerate}
\item The set $\bb D_W$ is dense in $L^2(\bb T^d)$;
\item The operator $\bb L_W : \bb D_W \to L^2(\bb T^d)$ is symmetric and
  non-positive:
\begin{eqnarray*}
\< -\bb L_W f , f\> \;\ge\; 0,
\end{eqnarray*}
where $\<\cdot,\cdot\>$ is the standard inner product in $L^2(\bb T^d)$.
\end{enumerate}
\end{lemma}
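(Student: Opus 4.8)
The plan is to handle the two assertions separately, in both cases reducing matters to the one--dimensional operators $\mc L_{W_k}$ and their eigenbases $\mc A_{W_k}$.

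For (a) I would use the tensor--product structure of $L^2(\bb T^d)$. Under the identification $L^2(\bb T^d)\cong L^2(\bb T)\x\cdots\x L^2(\bb T)$, the elements of $\mc A_W$ are precisely the elementary tensors $f_1\x\cdots\x f_d$ with $f_k\in\mc A_{W_k}$. First I would record orthonormality: by Fubini,
\[
\Big\langle \prod_{k=1}^d f_k,\ \prod_{k=1}^d g_k\Big\rangle \;=\; \prod_{k=1}^d \int_{\bb T} f_k(x_k)\,g_k(x_k)\,dx_k \;=\;\prod_{k=1}^d \langle f_k,g_k\rangle,
\]
which equals $1$ when the two products coincide factor by factor and $0$ otherwise, since each $\mc A_{W_k}$ is orthonormal. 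For completeness I would take $h\in L^2(\bb T^d)$ orthogonal to every element of $\mc A_W$ and show $h=0$: fixing $f_2,\dots,f_d$ and letting $f_1$ range over $\mc A_{W_1}$, completeness of $\mc A_{W_1}$ forces the partial integral of $h$ against $f_2\x\cdots\x f_d$ to vanish for almost every first coordinate; iterating over the remaining coordinates yields $h=0$ a.e. Here the countability of each $\mc A_{W_k}$ is what lets me discard a single common null set at each stage. Thus $\mc A_W$ is a complete orthonormal system and $\bb D_W=span(\mc A_W)$ is dense, which is (a). (Alternatively one may simply invoke the standard fact that tensor products of orthonormal bases form an orthonormal basis of the Hilbert tensor product.)

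For (b) I would combine (a) with the observation, recalled just before the statement, that each $\varphi=\prod_k f_k\in\mc A_W$ is an eigenvector of $\bb L_W$ with eigenvalue $\Lambda_\varphi=\sum_{k=1}^d\lambda_k$, where $\lambda_k$ is the eigenvalue of $f_k$ under $\mc L_{W_k}$; since the $f_k$ are real, these eigenvalues are real. Writing $f=\sum_\varphi c_\varphi\varphi$ and $g=\sum_\varphi e_\varphi\varphi$ as finite combinations over the orthonormal family $\mc A_W$, orthonormality gives
\[
\langle \bb L_W f,\, g\rangle \;=\; \sum_\varphi c_\varphi\, e_\varphi\, \Lambda_\varphi \;=\; \langle f,\, \bb L_W g\rangle ,
\]
the middle expression being symmetric in $f$ and $g$; this is the symmetry. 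Taking $g=f$ gives $\langle -\bb L_W f,\, f\rangle=\sum_\varphi c_\varphi^2\,(-\Lambda_\varphi)$, so non--positivity of $\bb L_W$ reduces to the claim that every eigenvalue satisfies $\Lambda_\varphi\le 0$, i.e. $\lambda_k\le 0$ for all $k$.

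The heart of the matter, and the step I expect to require the most care, is this one--dimensional non--positivity. I would derive it from the description of $\mc D_{W_k}$ in Proposition \ref{dominiodw} via an integration by parts on $\bb T$, where periodicity kills the boundary terms: for $u\in\mc D_{W_k}$,
\[
\langle \mc L_{W_k} u,\, u\rangle \;=\; \int_{\bb T}\big(\partial_{x}\partial_{W_k}u\big)\,u\,dx \;=\; -\int_{\bb T}\big(\partial_{W_k}u\big)^2\,dW_k \;\le\; 0 ,
\]
using the change--of--variables identity $\partial_x u\,dx=(\partial_{W_k}u)\,dW_k$. Applying this to a normalized eigenvector $u\in\mc A_{W_k}$ gives $\lambda_k=\langle \mc L_{W_k}u,u\rangle\le 0$, hence $\Lambda_\varphi\le0$ and finally $\langle -\bb L_W f,f\rangle\ge 0$, completing (b). (This quadratic--form identity, contained in the one--dimensional theory of \cite{TC}, also re-proves the symmetry directly, since its right--hand side is symmetric in its two arguments.)
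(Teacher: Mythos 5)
The paper does not actually prove Lemma \ref{f17}: it is quoted from \cite{v}. So there is no internal proof to compare against line by line; what can be said is that your argument is correct, self-contained, and follows the route one would expect the cited reference to take. Part (a) is the standard fact that tensor products of complete orthonormal systems form a complete orthonormal system of $L^2(\bb T)\x\cdots\x L^2(\bb T)\cong L^2(\bb T^d)$, and your iterated-Fubini completeness argument is sound (the countability remark about discarding null sets stage by stage is exactly the point that needs saying). Part (b) correctly reduces symmetry and non-positivity to the one-dimensional eigenvalues via the eigenbasis $\mc A_W$, using that elements of $\bb D_W$ are \emph{finite} combinations. The one step you flag as delicate --- the identity
\begin{equation*}
\int_{\bb T}\big(\partial_{x}\partial_{W_k}u\big)\,u\,dx \;=\; -\int_{\bb T}\big(\partial_{W_k}u\big)^2\,dW_k
\end{equation*}
for $u\in\mc D_{W_k}$ --- is precisely \cite[Lemma 3.1(b)]{TC}, which the paper itself invokes in the proof of Lemma \ref{mudavari}; indeed Lemma \ref{mudavari} with $g=f$ gives $\<\bb L_W f,f\>=-\sum_i\int(\partial_{W_i}f)^2\,d(x^i\x W_i)\le 0$ directly, so your non-positivity argument is consistent with, and subsumed by, the paper's own toolkit. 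The only caveat is that your derivation of this identity ``by integration by parts, with periodicity killing the boundary terms'' is a sketch: the honest justification goes through the representation of $u$ in Proposition \ref{dominiodw} and the two integral constraints there, which is what \cite{TC} carries out; citing that lemma (as the paper does) closes the gap cleanly.
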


\subsection{The auxiliary space} 
Let $L^2_{x^k\x W_k}(\bb T^d)$ be the Hilbert space of
measurable functions $H: \bb T^d\to\bb R$ such that
\begin{equation*}
\int_{\bb T^d}d(x^k\x W_k) \, H (x)^2 \;<\; \infty,
\end{equation*}
where $d(x^k\x W_k)$  represents the product measure in $\bb T^d$
obtained from Lesbegue's measure in $\bb T^{d-1}$ and the measure induced by $W_k$ in $\bb T$:
$$d(x^k\x W_k)\;=\;dx_1\cdots dx_{k-1}\;dW_k\; dx_{k+1}\cdots dx_d.$$
Denote by $\< H,G \>_{x^k\x W_k}$ the inner product of $L^2_{x^k\x W_k}(\bb T^d)$:
\begin{equation*}
\< H,G \>_{x^k\x W_k} \;=\; \int_{\bb T^d} d(x^k\x W_k) \, H(x) 
\, G(x)\;,
\end{equation*}
 and by $\|\cdot\|_{x^k\x W_k}$ the norm induced by this inner product.
\begin{lemma}\label{mudavari}
Let $f,g\in \bb D_W$, then for $i=1,\ldots,d$,
$$\int_{\bb T^d} \big(\partial_{x_i}\partial_{W_i} f(x)\big)g(x)\ dx \;=\; -\int_{\bb T^d} (\partial_{W_i}f)(\partial_{W_i}g)d(x^i\x W_i).$$
In particular,
\begin{equation*}
\int_{\bb T^d} \bb L_W f(x)g(x)\ dx \;=\; -\sum_{i=1}^d \int_{\bb T^d} (\partial_{W_i}f)(\partial_{W_i}g)d(x^i\x W_i).
\end{equation*}
\end{lemma}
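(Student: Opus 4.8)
The plan is to reduce everything to a one-dimensional integration by parts in the $i$th coordinate and then invoke Fubini's theorem. Since both sides of the asserted identity are bilinear in $(f,g)$ and $\bb D_W = span(\mc A_W)$, it suffices to treat $f,g\in\mc A_W$, i.e. products $f=\prod_{k=1}^d f_k$ and $g=\prod_{k=1}^d g_k$ with $f_k,g_k\in\mc A_{W_k}$; equivalently, one may simply keep $f,g\in\bb D_W$ and note that, for each fixed choice of the remaining variables $x^i$, the maps $x_i\mapsto f(x)$ and $x_i\mapsto g(x)$ lie in $\mc D_{W_i}$ (a linear space). Granting the single-index identity, the final display follows at once by summing over $i=1,\ldots,d$ and using \eqref{f002}, which gives $\bb L_W f=\sum_{i=1}^d\partial_{x_i}\partial_{W_i}f$.

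The core is the one-dimensional statement: for $f,g\in\mc D_{W}$ on $\bb T$,
\[
\int_{\bb T}(\partial_x\partial_W f)\,g\,dx\;=\;-\int_{\bb T}(\partial_W f)(\partial_W g)\,dW.
\]
First I would use Proposition \ref{dominiodw} to record the regularity produced by membership in $\mc D_W$. Writing $f(x)=a+bW(x)+\int_{(0,x]}W(dy)\int_0^y\mf f(z)\,dz$, one reads off that $F:=\partial_W f=b+\int_0^x\mf f(z)\,dz$ is \emph{absolutely continuous} in $x$ with $\partial_x F=\mf f$, so that $\partial_x\partial_W f=\mf f$ and $dF=\mf f\,dx$. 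Analogously, from the representation of $g$ one obtains $dg=(\partial_W g)\,dW$ as a Stieltjes differential. Hence
\[
\int_{\bb T}(\partial_x\partial_W f)\,g\,dx=\int_{(0,1]}g\,dF,
\]
and I would apply the Stieltjes integration-by-parts formula to the pair $(F,g)$, where $F$ is continuous and $g$ is c\`adl\`ag of bounded variation. Because $F$ is continuous the jump corrections vanish and $\int_{(0,1]}g\,dF+\int_{(0,1]}F\,dg=[Fg]_0^1$; since $f,g$ are periodic and $W$ has periodic increments, both $F=\partial_W f$ and $g$ are $1$-periodic, so the boundary term $F(1)g(1)-F(0)g(0)$ cancels. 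Substituting $dg=(\partial_W g)\,dW$ then yields the displayed one-dimensional identity.

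Finally I would assemble the $d$-dimensional statement. Fixing $i$ and applying Fubini to the Lebesgue integral $\int_{\bb T^d}(\partial_{x_i}\partial_{W_i}f)\,g\,dx$, I integrate first in $x_i$ with $x^i$ held fixed; the inner integral is exactly the one-dimensional identity above, since the factors depending only on $x^i$ pass through both $\partial_{x_i}$ and $\partial_{W_i}$, producing $-\int_{\bb T}(\partial_{W_i}f)(\partial_{W_i}g)\,dW_i$. Integrating the result against $dx^i$ and recognizing $dx^i\,dW_i=d(x^i\x W_i)$ gives the first claim, and summing over $i$ gives the second. The main obstacle is precisely the one-dimensional integration by parts: one must justify the Stieltjes formula for a continuous integrator against a c\`adl\`ag function of bounded variation, verify that the jump terms genuinely drop out because $F$ is continuous, and confirm that periodicity of $\partial_W f$ (which relies on the periodic-increment hypothesis on $W$) makes the boundary contribution vanish. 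Once these points are secured, the reduction and the measure-theoretic bookkeeping are routine.
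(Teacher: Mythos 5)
Your proof is correct and follows the same route as the paper: the paper's entire argument consists of Fubini's theorem to reduce to an integral over the $i$th coordinate, an appeal to the one-dimensional integration-by-parts identity of \cite[Lemma 3.1 (b)]{TC}, and Fubini again, which is exactly your skeleton. The only difference is that you prove that one-dimensional identity yourself rather than citing it --- and your derivation is sound: the representation from Proposition \ref{dominiodw} gives that $\partial_{W}f$ is absolutely continuous with $d(\partial_W f)=\mf f\,dx$ and $dg=(\partial_W g)\,dW$, the continuity of $\partial_W f$ kills the jump corrections in the Stieltjes integration by parts, and periodicity of $\partial_W f$ (via $\int_0^1\mf f=0$) and of $g$ kills the boundary term.
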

\begin{proof}
Let $f,g\in \bb D_W$. By Fubini's theorem
$$\int_{\bb T^d} \mc L_{W_i} f(x)g(x) dx = \int_{\bb T^{d-1}}\left[\int_{\bb T} \mc L_{W_i} f(x)g(x)dx_i\right] dx^i,$$
where $dx^i$ is the Lebesgue product measure in $\bb T^{d-1}$ on the coordinates $x_1,$ $\ldots,$ $x_{i-1},$ $x_{i+1},$ $\ldots,$ $x_d$.

An application of \cite[Lemma 3.1 (b)]{TC} and  again Fubini's theorem concludes the proof of this Lemma.
\end{proof}

Let $L^2_{x^j\x W_j, 0}(\bb T^d)$ be the closed subspace of $L^2_{x^j\x W_j}(\bb T^d)$ consisting of the functions that have zero mean with respect to the measure $d(x^j\x W_j)$:
$$\int_{\bb T^d} f d(x^j\x W_j) = 0.$$

Finally, using the characterization of the functions in $\mc D_{W_j}$ given in Proposition \ref{dominiodw}, and the definition of $\bb D_W$, we have that the set $\{\partial_{W_j} h; h\in\bb D_W\}$ is dense in $L^2_{x^j\x W_j, 0}(\bb T^d)$.

\subsection{The $W$-Sobolev space}
We define the Sobolev space of $W$-generalized derivatives as the space of functions $g\in L^2(\bb T^d)$ such that for each $i=1,\ldots,d$ there exist fuctions $G_i\in L^2_{x^i\x W_i,0}(\bb T^d)$ satisfying the following integral by parts identity.
\begin{equation}\label{eq22}
\int_{\bb T^d}\big(\partial_{x_i}\partial_{W_i}f\big)\;g\;dx\; =\; -\;\int_{\bb T^d}(\partial_{W_i}f)\;G_id(x^i\x W_i),
\end{equation}
for every function $f\in \bb D_W$. We denote this space by $\tilde{H}_{1,W}(\bb T^d)$. A standard measure-theoretic argument allows one to prove that for each function $g\in\tilde{H}_{1,W}(\bb T^d)$ and $i=1,\ldots,d$, we have a unique function $G_i$ that satisfies \eqref{eq22}. Note that $\bb D_W\subset \tilde{H}_{1,W}(\bb T^d)$. Moreover, if $g\in\bb D_W$ then $G_i = \partial_{W_i} g$. For this reason for each function $g\in \tilde{H}_{1,W}$ we denote $G_i$ simply by $\partial_{W_i} g$, and we call it the $i$th \emph{generalized weak derivative} of the function $g$ with respect to $W$.

\begin{lemma}\label{sobolevhilbert}
 The set $\tilde{H}_{1,W}(\bb T^d)$ is a Hilbert space with respect to the inner product
\begin{equation}\label{inner}
\<f,g\>_{1,W}\;=\; \<f,g\> + \sum_{i=1}^d\int_{\bb T^d}(\partial_{W_i}f)(\partial_{W_i}g)\;d(x^i\x W_i)
\end{equation}
\end{lemma}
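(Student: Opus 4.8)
The plan is to verify that $\langle\cdot,\cdot\rangle_{1,W}$ is a genuine inner product and then show completeness, which is the substantive part. That $\langle\cdot,\cdot\rangle_{1,W}$ is bilinear and symmetric is immediate from the bilinearity and symmetry of the underlying $L^2(\bb T^d)$ inner product and each $L^2_{x^i\x W_i}(\bb T^d)$ inner product. For positive-definiteness, note that $\langle f,f\rangle_{1,W}\ge \langle f,f\rangle = \|f\|_{L^2}^2\ge 0$, and if $\langle f,f\rangle_{1,W}=0$ then in particular $\|f\|_{L^2}^2=0$, forcing $f=0$ in $L^2(\bb T^d)$; so the form is positive-definite. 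Thus $\tilde H_{1,W}(\bb T^d)$ is a pre-Hilbert space, and it only remains to prove it is complete with respect to the norm $\|g\|_{1,W}^2=\|g\|_{L^2}^2+\sum_{i=1}^d\|\partial_{W_i}g\|_{x^i\x W_i}^2$.

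For completeness, I would take a Cauchy sequence $(g_n)$ in $\tilde H_{1,W}(\bb T^d)$. The key observation is that the defining norm controls $d+1$ separate $L^2$-type norms simultaneously: from $\|g_n-g_m\|_{1,W}\to 0$ we read off that $(g_n)$ is Cauchy in $L^2(\bb T^d)$ and, for each $i$, that $(\partial_{W_i}g_n)$ is Cauchy in $L^2_{x^i\x W_i}(\bb T^d)$. Since each of these is a complete Hilbert space, there exist limits $g\in L^2(\bb T^d)$ with $g_n\to g$ in $L^2$, and $G_i\in L^2_{x^i\x W_i}(\bb T^d)$ with $\partial_{W_i}g_n\to G_i$. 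Because each $G_i$ is an $L^2_{x^i\x W_i}$-limit of functions in the closed subspace $L^2_{x^i\x W_i,0}(\bb T^d)$ (recall every weak derivative lies in the zero-mean subspace by the definition of $\tilde H_{1,W}$), we get $G_i\in L^2_{x^i\x W_i,0}(\bb T^d)$ as well.

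The crux is to show that these limits assemble into an element of $\tilde H_{1,W}(\bb T^d)$, i.e.\ that $g$ admits $G_i$ as its $i$th generalized weak derivative. For this I would pass to the limit in the integration-by-parts identity \eqref{eq22} written for $g_n$: for every fixed $f\in\bb D_W$ and each $i$,
\begin{equation*}
\int_{\bb T^d}\big(\partial_{x_i}\partial_{W_i}f\big)\,g_n\,dx\;=\;-\int_{\bb T^d}(\partial_{W_i}f)\,\partial_{W_i}g_n\,d(x^i\x W_i).
\end{equation*}
The left side converges to $\int_{\bb T^d}(\partial_{x_i}\partial_{W_i}f)\,g\,dx$ because $g_n\to g$ in $L^2(\bb T^d)$ and $\partial_{x_i}\partial_{W_i}f=\mc L_{W_i}f\in L^2(\bb T^d)$ for $f\in\bb D_W$ (Cauchy–Schwarz in $L^2(\bb T^d)$). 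The right side converges to $-\int_{\bb T^d}(\partial_{W_i}f)\,G_i\,d(x^i\x W_i)$ because $\partial_{W_i}g_n\to G_i$ in $L^2_{x^i\x W_i}(\bb T^d)$ and $\partial_{W_i}f\in L^2_{x^i\x W_i}(\bb T^d)$ (Cauchy–Schwarz in that space). Hence $g$ and the $G_i$ satisfy \eqref{eq22} for all $f\in\bb D_W$, so $g\in\tilde H_{1,W}(\bb T^d)$ with $\partial_{W_i}g=G_i$. Finally $\|g_n-g\|_{1,W}^2=\|g_n-g\|_{L^2}^2+\sum_i\|\partial_{W_i}g_n-G_i\|_{x^i\x W_i}^2\to 0$, proving $g_n\to g$ in $\tilde H_{1,W}(\bb T^d)$ and completing the argument.

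I expect the main obstacle to be the passage to the limit in \eqref{eq22}: one must make sure the test functions $f$ range over $\bb D_W$ (where $\mc L_{W_i}f\in L^2$ is guaranteed), and that the limiting functions $G_i$ land in the correct zero-mean closed subspaces so that the resulting identity genuinely certifies $g$ as an element of $\tilde H_{1,W}(\bb T^d)$ rather than merely an $L^2$ function with distributional derivatives. The closedness of $L^2_{x^i\x W_i,0}(\bb T^d)$ inside $L^2_{x^i\x W_i}(\bb T^d)$ is exactly what is needed to secure the membership $G_i\in L^2_{x^i\x W_i,0}(\bb T^d)$.
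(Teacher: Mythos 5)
Your proposal is correct and follows essentially the same route as the paper's proof: extract limits $g$ and $G_i$ from the Cauchy sequence in $L^2(\bb T^d)$ and in each $L^2_{x^i\x W_i,0}(\bb T^d)$, then pass to the limit in the identity \eqref{eq22} via Cauchy--Schwarz to certify that $G_i$ is the $i$th generalized weak derivative of $g$. Your additional remarks on the inner-product axioms and on the closedness of the zero-mean subspace are routine details the paper leaves implicit.
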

\begin{proof}
Let $(g_n)_{n\in\bb N}$ be a Cauchy sequence in $\tilde{H}_{1,W}(\bb T^d)$, and denote by $\|\cdot\|_{1,W}$ the norm induced by the inner product \eqref{inner}. By the definition of the norm $\|\cdot\|_{1,W}$, we obtain that $(g_n)_{n\in\bb N}$ is a Cauchy sequence in $L^2(\bb T^d)$ and that $(\partial_{W_i}g_n)_{n\in\bb N}$ is a Cauchy sequence in $L^2_{x^i\x W_i,0}(\bb T^d)$ for each $i=1,\ldots,d$. Therefore, there exist functions $g\in L^2(\bb T^d)$ and $G_i\in L^2_{x^i\x W_i,0}(\bb T^d)$ such that $g=\lim_{n\to\infty} g_n$, and $G_i = \lim_{n\to\infty} \partial_{W_i}g_n$. It remains to be proved that $G_i$ is, in fact, the $i$th generalized weak derivative of $g$ with respect to $W$. But this follows from a simple calculation: for each $f\in\bb D_W$ we have
\begin{eqnarray*}
\int_{\bb T^d} \big(\partial_{x_i}\partial_{W_i}f\big) g dx &=& \lim_{n\to\infty} \int_{\bb T^d} \big(\partial_{x_i} \partial_{W_i}f\big) g_n dx\\
&=& -\lim_{n\to\infty} \int_{\bb T^d} (\partial_{W_i}f)( \partial_{W_i}g) d(x^i\x W_i)\\
&=& -\int_{T^d} (\partial_{W_i}f) G_i d(x^i\x W_i),
\end{eqnarray*}
where we used H\" older's inequality to pass the limit through the integral sign.
\end{proof}

\subsection{Approximation by smooth functions and the energetic space}
We will now obtain approximation of functions in the Sobolev space $\tilde{H}_{1,W}(\bb T^d)$ by
functions in $\bb D_W$. Note that the functions in $\bb D_W$ can be seen as smooth, in the sense that one may apply the operator $\bb L_W$ to these functions in the strong sense. 

Let us introduce $\<\cdot, \cdot\>_{1,W}$ the inner product on $\bb D_W$
defined by
\begin{equation}
\label{f51}
\<f,g\>_{1,W}\; =\; \<f,g\> \; +\; \<- \bb L_W f, g\>,
\end{equation}
and note that by Lemma \ref{mudavari},
\begin{equation*}
\<f,g\>_{1,W}\; =\; \<f,g\> \; +\; \sum_{i=1}^d \int_{\bb T^d} (\partial_{W_i}f)(\partial_{W_i}g)d(x^i\x W_i).
\end{equation*}

Let $H_{1,W} (\bb T)$ be the set of all functions $f$ in $L^2(\bb T^d)$
for which there exists a sequence $(f_n)_{n\in\bb N}$ in $\bb D_W$
such that $f_n$ converges to $f$ in $L^2(\bb T^d)$ and $f_n$ is a Cauchy sequence
for the inner product $\<\cdot, \cdot \>_{1,W}$. Such sequence
$(f_n)_{n\in\bb N}$ is called \emph{admissible} for $f$. 

For $f$, $g$ in $H_{1,W} (\bb T^d)$, define
\begin{equation}
\label{f20}
\<f,g\>_{1,W}\; =\; \lim_{n\to\infty} \<f_n,g_n\>_{1,W}\;,
\end{equation}
where $(f_n)_{n\in\bb N}$, $(g_n)_{n\in\bb N}$ are admissible sequences for $f$, and $g$,
respectively.  By \cite[Proposition 5.3.3]{z}, this limit exists and
does not depend on the admissible sequence chosen; the set $\bb D_W$ is dense in
$H_{1,W}$; and the embedding $H_{1,W} \subset L^2(\bb T^d)$ is continuous.  
 Moreover, $H_{1,W}(\bb T^d)$ endowed with the inner product $\<\cdot, \cdot \>_{1,W}$
just defined is a Hilbert space. Denote $\left\|\cdot \right\|_{1,W}$ the norm in $H_{1,W}$  induced by $\<\cdot, \cdot \>_{1,W}$.
The space $H_{1,W}(\bb T^d)$ is called energetic space. For more details on the theory of energetic spaces see \cite[Chapter 5]{z}.

Note that $H_{1,W}$ is the space of functions that can be approximated by functions in $\bb D_W$ with respect to the norm $\|\cdot\|_{1,W}$. The following Proposition shows that this space is, in fact, the Sobolev space $\tilde{H}_{1,W}(\bb T^d)$.

\begin{proposition}[Approximation by smooth functions]\label{aproxsuave}
We have the equality of the sets
$$\tilde{H}_{1,W}(\bb T^d) = H_{1,W}(\bb T^d).$$
In particular, we can approximate any function $f$ in the Sobolev space $\tilde{H}_{1,W}(\bb T^d)$ by functions in $\bb D_W$.
\end{proposition}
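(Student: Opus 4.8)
The plan is to prove the equality $\tilde H_{1,W}(\bb T^d) = H_{1,W}(\bb T^d)$ by establishing both inclusions. The inclusion $H_{1,W}(\bb T^d) \subset \tilde H_{1,W}(\bb T^d)$ is the easy direction and is essentially contained in the proof of Lemma \ref{sobolevhilbert}. Indeed, if $f\in H_{1,W}(\bb T^d)$ with admissible sequence $(f_n)\subset\bb D_W$, then $(f_n)$ is Cauchy for $\<\cdot,\cdot\>_{1,W}$, so the generalized derivatives $(\partial_{W_i}f_n)$ form a Cauchy sequence in $L^2_{x^i\x W_i,0}(\bb T^d)$ and converge to some $G_i$. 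Passing to the limit in the integration-by-parts identity from Lemma \ref{mudavari} (which holds for each $f_n\in\bb D_W$ tested against an arbitrary $\phi\in\bb D_W$), and using H\"older's inequality exactly as in Lemma \ref{sobolevhilbert}, shows that $f\in\tilde H_{1,W}(\bb T^d)$ with $\partial_{W_i}f = G_i$.

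For the reverse inclusion $\tilde H_{1,W}(\bb T^d)\subset H_{1,W}(\bb T^d)$, which is the substantive direction, the natural approach is via orthogonal complements in the Hilbert space $\tilde H_{1,W}(\bb T^d)$. Since $\bb D_W\subset H_{1,W}(\bb T^d)$ and $H_{1,W}(\bb T^d)$ is exactly the closure of $\bb D_W$ in the $\|\cdot\|_{1,W}$-norm, it suffices to show that $\bb D_W$ is dense in $\tilde H_{1,W}(\bb T^d)$ with respect to $\<\cdot,\cdot\>_{1,W}$. By the projection theorem, this is equivalent to showing that any $g\in\tilde H_{1,W}(\bb T^d)$ orthogonal to every $f\in\bb D_W$ must be zero. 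So I would assume $\<f,g\>_{1,W}=0$ for all $f\in\bb D_W$ and aim to deduce $g=0$.

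Writing out the orthogonality condition using \eqref{inner} and Lemma \ref{mudavari}, for every $f\in\bb D_W$ we have
\begin{equation*}
\<f,g\> + \sum_{i=1}^d\int_{\bb T^d}(\partial_{W_i}f)(\partial_{W_i}g)\,d(x^i\x W_i) \;=\; 0.
\end{equation*}
Using the definition \eqref{eq22} of the weak derivative $\partial_{W_i}g = G_i$ to rewrite the second term as $-\int(\partial_{x_i}\partial_{W_i}f)\,g\,dx$ and recalling from \eqref{f002} that $\sum_i \partial_{x_i}\partial_{W_i}f = \bb L_W f$, the condition collapses to $\<f,g\> - \<\bb L_W f, g\> = \<(I-\bb L_W)f,\,g\>=0$ for all $f\in\bb D_W$. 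The proof is then completed by invoking the surjectivity of $I-\bb L_W$ on $L^2(\bb T^d)$: since $\bb L_W$ is symmetric and non-positive (Lemma \ref{f17}), the range of $(I-\bb L_W)$ restricted to $\bb D_W$ is dense in $L^2(\bb T^d)$, whence $\<h,g\>=0$ for a dense set of $h\in L^2(\bb T^d)$, forcing $g=0$.

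The main obstacle is this last surjectivity/density step: I must verify that $(I-\bb L_W)\bb D_W$ is dense in $L^2(\bb T^d)$. This follows cleanly from the spectral structure established before the statement, namely that $\bb L_W$ admits the complete orthonormal system $\mc A_W$ of eigenvectors with non-positive eigenvalues $\sum_k\lambda_k$. For each such eigenvector $\psi$ with eigenvalue $\lambda\le 0$, we have $(I-\bb L_W)\psi = (1-\lambda)\psi$ with $1-\lambda\ge 1>0$, so every element of $\mc A_W$ lies in the range of $(I-\bb L_W)$; since $\mc A_W$ is a complete orthonormal system (by Lemma \ref{f17}(a) together with the eigenvector construction), the range is dense and the argument closes. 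The only care needed is to ensure the limit-exchange and the application of \eqref{eq22} are justified for all test functions in $\bb D_W$, which is immediate since $\bb D_W = \mathrm{span}(\mc A_W)$ and all identities are linear in $f$.
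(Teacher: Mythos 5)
Your proof is correct, and the substantive inclusion $\tilde H_{1,W}(\bb T^d)\subset H_{1,W}(\bb T^d)$ is established by a genuinely different route from the paper's. You identify $H_{1,W}(\bb T^d)$ with the closure of $\bb D_W$ inside the Hilbert space $\tilde H_{1,W}(\bb T^d)$ and kill the orthogonal complement: the defining identity \eqref{eq22} together with \eqref{f002} collapses $\<f,g\>_{1,W}$ to $\<(I-\bb L_W)f,g\>$ for $f\in\bb D_W$, and the complete orthonormal eigenbasis $\mc A_W$ with non-positive eigenvalues shows that $(I-\bb L_W)\bb D_W$ contains $\mathrm{span}(\mc A_W)=\bb D_W$ and is therefore dense in $L^2(\bb T^d)$, forcing $g=0$. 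The paper instead proves that $H_{1,W}(\bb T^d)$ is dense in $\tilde H_{1,W}(\bb T^d)$ and concludes by completeness: given $g\in\tilde H_{1,W}(\bb T^d)$ it picks an $L^2$-approximating sequence $f_n$ from $H_{1,W}(\bb T^d)$ and upgrades the convergence of the weak derivatives $F_{i,n}\to G_i$ via a Banach--Steinhaus argument applied to the functionals induced by $F_{i,n}-G_i$ on $L^2_{x^i\x W_i,0}(\bb T^d)$. Your route leans on the spectral structure of $\bb L_W$ (already imported from \cite{TC,v}) and in exchange gets a shorter, self-contained argument; notably it sidesteps the delicate point in the paper's proof that pointwise convergence of $\mc F_{i,n}$ on a dense set only propagates to all of $L^2_{x^i\x W_i,0}(\bb T^d)$ once one has a uniform bound on $\|F_{i,n}-G_i\|$, which the paper does not arrange when choosing $f_n$. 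The paper's route, conversely, uses nothing beyond the $L^2$-density of $\bb D_W$ and so would adapt to settings without a complete eigenbasis. Two small points you should make explicit: first, that the embedding $H_{1,W}(\bb T^d)\hookrightarrow\tilde H_{1,W}(\bb T^d)$ furnished by your first paragraph is isometric, so that $H_{1,W}(\bb T^d)$ is a closed subspace and density of $\bb D_W$ really does yield equality; second, that $g=0$ in $L^2(\bb T^d)$ implies $g=0$ in $\tilde H_{1,W}(\bb T^d)$, which follows from the uniqueness of the weak derivatives $G_i$ and the density of $\{\partial_{W_i}h;\,h\in\bb D_W\}$ in $L^2_{x^i\x W_i,0}(\bb T^d)$. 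Also, be aware that symmetry and non-positivity of $\bb L_W$ alone do not give the density of the range of $I-\bb L_W$; it is your eigenbasis computation, not Lemma \ref{f17} by itself, that carries that step.
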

\begin{proof}
Fix $g\in H_{1,W}(\bb T^d)$. By definition, there exists a sequence $g_n$ in $\bb D_W$ such that $g_n$ converges to $g$ in $L^2(\bb T^d)$ and $g_n$ is Cauchy for the inner product $\<\cdot, \cdot \>_{1,W}$.
So, for each $i=1,\ldots, d$ there exists functions $G_i\in L^2_{x^i\x W_i,0}(\bb T^d)$ such that $\partial_{W_i}g_n$ converges to $G_i$ in $L^2_{x^i\x W_i,0}(\bb T^d)$. Applying the Hölder's inequality, we deduce that for every $f\in \bb D_W$
\begin{equation*}
 \int_{\bb T^d}\big(\partial_{x_i} \partial_{W_i}f\big)g\;dx\;=\;
 \lim_{n\to\infty}\int_{\bb T^d}\big(\partial_{x_i} \partial_{W_i}f\big)g_n\;dx.
\end{equation*}
By Lemma \ref{mudavari}, we obtain
\begin{eqnarray*}
\lim_{n\to\infty}\int_{\bb T^d}\big(\partial_{x_i} \partial_{W_i} f\big)g_n dx &=& \lim_{n\to\infty}\int_{\bb T^d}(\partial_{W_i} f)(\partial_{W_i}g_n)\;d(x^i\x W_i)\\
& =&-
\int_{\bb T^d}(\partial_{W_i}f)G_i\;d(x^i\x W_i).
\end{eqnarray*}
Then, $g\in \tilde{H}_{1,W}(\bb T^d)$ and therefore $H_{1,W}(\bb T^d)\subset \tilde{H}_{1,W}(\bb T^d)$.

We will now prove that $H_{1,W}(\bb T^d)$ is dense in $\tilde{H}_{1,W}(\bb T^d)$, and since both of them are complete, they are equal. Note that
since $\bb D_W$ is dense in $L^2(\bb T^d)$ and $\bb D_W \subset H_{1,W}(\bb T^d)$, we have that $H_{1,W}(\bb T^d)$ is also dense in $L^2(\bb T^d)$.

Therefore, given a function $g\in \tilde{H}_{1,W}(\bb T^d)$, we can approximate $g$ by a sequence of functions $(f_n)_{n\in \bb N}$ in ${H}_{1,W}(\bb T^d)$  with respect to the $L^2(\bb T^d)$ norm. Let $F_{i,n}$ be the $i$th generalized weak derivative of $f_n$ with respect to $W$. We have, therefore, for each $h\in \bb D_W$
$$\lim_{n\to\infty}\int_{\bb T^d} (\partial_{W_i}h)(F_{i,n} - G_i) d(x^i\x W_i) = -\lim_{n\to\infty}\int_{\bb T^d} \big(\partial_{x_i}\partial_{W_i}h\big) (f_n - g)dx = 0.$$

Denote by $\mc F_{i,n}:L^2_{x^i\x W_i,0}(\bb T^d)\to \bb R$ the sequence of bounded linear functionals induced by $F_{i,n} - G_i$:
$$\mc F_{i,n} (h) := \int_{\bb T^d} h [F_{i,n} - G_i] d(x^i\x W_i),$$
for $h\in L^2_{x^i\x W_i,0}(\bb T^d)$. We then note that, since the set $\{\partial_{W_i}h; h\in \bb D_W\}$ is dense in $L^2_{x^i\x W_i,0}(\bb T^d)$, $\mc F_{i,n}$ converges to $0$ pointwisely. By Banach-Steinhaus' Theorem, $\mc F_{i,n}$ converges strongly to $0$, and, thus, $F_{i,n}$ converges to $G_i$ in $L^2_{x^i\x W_i,0}(\bb T^d)$, for each $i=1,\ldots,d$. Therefore, $f_n$ converges to $g$ in $L^2(\bb T^d)$ and $\partial_{W_i}f_n$ converges to $G_i$ in $L^2_{x^i\x W_i,0}(\bb T^d)$ for each $i$, i.e., $f_n$ converges to $g$ with the norm $\|\cdot\|_{1,W}$, and the density of $H_{1,W}(\bb T^d)$ in $\tilde{H}_{1,W}(\bb T^d)$ follows.
\end{proof}

The next Corollary shows an analogous of the classic result for Sobolev spaces with dimension $d=1$, which states that every function in the one-dimensional Sobolev space is absolutely continuous. 
\begin{corollary}\label{abscont}
A function $f$ in $L^2(\bb T)$ belongs to the Sobolev space $\tilde{H}_{1,W} (\bb T)$ if and only
if there exists $F$ in $L^2_W(\bb T)$ and a finite constant $c$ such that
\begin{equation*}
\int_{(0,1]} F(y) \, dW(y) \;=\;0 \quad\text{and}\quad
f(x) \;=\; c\;+\; \int_{(0,x]} F(y) \, dW(y)
\end{equation*}
Lebesgue almost surely. 
\end{corollary}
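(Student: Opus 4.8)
The plan is to prove both directions of the equivalence, using the one-dimensional case ($d=1$) of Proposition \ref{aproxsuave} together with the characterization of the domain $\mc D_{W}$ given in Proposition \ref{dominiodw}. The key observation is that in dimension one, $\tilde H_{1,W}(\bb T) = H_{1,W}(\bb T)$, so a function $f\in\tilde H_{1,W}(\bb T)$ can be approximated by a sequence $(f_n)$ in $\bb D_W$ that is Cauchy for $\<\cdot,\cdot\>_{1,W}$; in particular $\partial_W f_n \to F$ in $L^2_W(\bb T)$ for some $F$, which is precisely the generalized weak derivative of $f$.

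For the forward implication, suppose $f\in\tilde H_{1,W}(\bb T)$ and let $F=\partial_W f\in L^2_W(\bb T)$ be its weak derivative. Since $F$ lies in $L^2_{W,0}(\bb T)$, the zero-mean condition $\int_{(0,1]}F\,dW=0$ is immediate from the definition of that space. To obtain the integral representation, I would define $\tilde f(x) = c + \int_{(0,x]}F(y)\,dW(y)$ for an appropriate constant $c$ and show $\tilde f = f$ a.e. The natural route is to test both $f$ and $\tilde f$ against functions of the form $\partial_{x}\partial_W h$ for $h\in\bb D_W$: using the integration-by-parts identity \eqref{eq22} that defines the weak derivative, $\int_{\bb T}(\partial_x\partial_W h)\,f\,dx = -\int_{\bb T}(\partial_W h)\,F\,dW$, while a direct computation (Fubini on the iterated integral defining $\tilde f$) gives the same identity for $\tilde f$. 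Hence $f-\tilde f$ is orthogonal to the set $\{\partial_x\partial_W h : h\in\bb D_W\}$ in $L^2(\bb T)$; since this set is dense in the subspace of mean-zero $L^2$ functions (the image of $\bb L_W$ restricted appropriately), $f-\tilde f$ must be constant a.e., which can be absorbed into the choice of $c$.

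For the converse, given $F\in L^2_W(\bb T)$ with $\int_{(0,1]}F\,dW=0$ and $f(x)=c+\int_{(0,x]}F\,dW$, I would directly verify the defining identity \eqref{eq22}: for every $h\in\bb D_W$, compute $\int_{\bb T}(\partial_x\partial_W h)\,f\,dx$ by substituting the representation of $f$ and applying Fubini's theorem together with an integration by parts (here using that $h\in\bb D_W$ satisfies the boundary conditions from Proposition \ref{dominiodw}, so the boundary terms on the torus vanish by periodicity), arriving at $-\int_{\bb T}(\partial_W h)\,F\,dW$. This shows $F$ serves as the weak derivative $G_1$ and hence $f\in\tilde H_{1,W}(\bb T)$.

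The main obstacle I anticipate is the rigorous justification that $f-\tilde f$ is constant in the forward direction, i.e. identifying the annihilator of $\{\partial_x\partial_W h : h\in\bb D_W\}$ in $L^2(\bb T)$. This requires knowing that this set spans (densely) exactly the orthogonal complement of the constants. The cleanest way around it is to avoid the orthogonality argument entirely and instead invoke the one-dimensional approximation result directly: take an admissible sequence $f_n\in\bb D_W$ with $f_n\to f$ in $L^2$ and $\partial_W f_n\to F$ in $L^2_W$, use Proposition \ref{dominiodw} to write each $f_n$ explicitly as $a_n + b_n W(x) + \int_{(0,x]}W(dy)\int_0^y\mf f_n(z)\,dz$, and pass to the limit in the integral representation. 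Controlling the constants $a_n,b_n$ and passing the limit inside the iterated integral is then the technical heart of the argument, but it is more transparent than the duality approach.
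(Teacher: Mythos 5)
Your proposal is correct in substance, but it takes a different (and more self-contained) route than the paper. The paper's proof is a two-line citation: Franco--Landim \cite{TC} already establish exactly this integral characterization for the \emph{energetic} space $H_{1,W}(\bb T)$, and Proposition \ref{aproxsuave} identifies $\tilde{H}_{1,W}(\bb T)$ with $H_{1,W}(\bb T)$, so the corollary follows with no further work. You instead prove the equivalence directly from the definition of $\tilde H_{1,W}$. Your duality argument for the forward direction is sound: the missing density fact you flag --- that $\{\partial_x\partial_W h : h\in\bb D_W\}$ spans a dense subspace of the mean-zero functions in $L^2(\bb T)$ --- does hold, since by Proposition \ref{dominiodw} the kernel of $\mc L_W$ consists exactly of the constants (strict monotonicity of $W$ forces $b=0$ when $\mf f=0$), and the image of $\bb D_W$ under $\mc L_W$ contains every eigenfunction with nonzero eigenvalue, whose span is dense in the orthogonal complement of the constants. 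Your proposed alternative is cleaner still and is essentially a hands-on version of what the paper delegates to \cite{TC}: writing $f_n\in\bb D_W$ via Proposition \ref{dominiodw} shows $f_n(x)=c_n+\int_{(0,x]}\partial_W f_n\,dW$ with $\int_{(0,1]}\partial_W f_n\,dW=0$, and since $F\mapsto\int_{(0,\cdot]}F\,dW$ is continuous from $L^2_W(\bb T)$ into $L^\infty(\bb T)$ (Cauchy--Schwarz), one passes to the limit and identifies the constants $c_n$ as a convergent sequence. What your approach buys is independence from the external reference; what the paper's buys is brevity, at the cost of leaving all the analytic content inside \cite{TC}.
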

\begin{proof}
In \cite{TC} the energetic extension $H_{1,W}(\bb T)$ has the characterization given in Corollary \ref{abscont}. By Proposition \ref{aproxsuave} we have that these spaces coincide, and hence the proof follows.
\end{proof}

From Proposition \ref{aproxsuave}, we may use the notation $H_{1,W}(\bb T^d)$ for the Sobolev space $\tilde{H}_{1,W}(\bb T^d)$.
Another interesting feature we have on this space, which is very useful in the study of elliptic equations, is the Poincar\'e inequality:
\begin{corollary}[Poincaré Inequality]\label{poinc}
For all $f\in {H}_{1,W}(\bb T^d)$ there exists a finite constant $C$ such that
\begin{eqnarray*}
\left\|f-\int_{\bb T^d}f\;dx\right\|_{L^2(\bb T^d)}^2 &\le&C\sum_{i=1}^n \int_{\bb T^d} \left(\partial_{W_i}f\right)^2 d(x^i\x W_i)\\
&:=& C \|\nabla_W f\|_{L_W^2(\bb T^d)}^2.
\end{eqnarray*}
\end{corollary}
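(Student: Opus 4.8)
The plan is to prove the inequality first on the dense subspace $\bb D_W$ and then transfer it to all of $H_{1,W}(\bb T^d)$ via the approximation result of Proposition \ref{aproxsuave}. The engine of the argument is spectral: by Lemma \ref{f17} the operator $-\bb L_W$ is symmetric and nonnegative, and it admits the complete orthonormal system of eigenvectors $\mc A_W$. I would show that its kernel consists exactly of the constant functions and that all other eigenvalues are bounded below by a strictly positive number $\lambda_*$; the Poincar\'e constant will then be $C = 1/\lambda_*$.

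First I would treat $f\in\bb D_W$. Applying Lemma \ref{mudavari} with $g=f$ converts the right-hand side into a quadratic form of $-\bb L_W$:
\[
\sum_{i=1}^d \int_{\bb T^d}(\partial_{W_i}f)^2\,d(x^i\x W_i)\;=\;\<-\bb L_W f,\,f\>.
\]
Expanding $f=\sum_j c_j\phi_j$ as a finite combination of eigenvectors $\phi_j\in\mc A_W$ with $-\bb L_W\phi_j=\gamma_j\phi_j$, the normalized constant $\phi_0\equiv 1$ is the eigenvector for the eigenvalue $\gamma_0=0$, so $c_0=\<f,\phi_0\>=\int_{\bb T^d}f\,dx$ and hence $f-\int_{\bb T^d}f\,dx=\sum_{\gamma_j>0}c_j\phi_j$. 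Using orthonormality of $\mc A_W$,
\[
\<-\bb L_W f,\,f\>\;=\;\sum_{\gamma_j>0}\gamma_j c_j^2\;\ge\;\lambda_*\sum_{\gamma_j>0}c_j^2\;=\;\lambda_*\,\Big\|f-\int_{\bb T^d}f\,dx\Big\|_{L^2(\bb T^d)}^2,
\]
which is exactly the claimed inequality on $\bb D_W$ with $C=1/\lambda_*$.

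To reach a general $f\in H_{1,W}(\bb T^d)$ I would pick an admissible sequence $(f_n)$ in $\bb D_W$ with $f_n\to f$ in $L^2(\bb T^d)$ and $\partial_{W_i}f_n\to\partial_{W_i}f$ in $L^2_{x^i\x W_i,0}(\bb T^d)$ for each $i$, as furnished by Proposition \ref{aproxsuave}. Applying the inequality to each $f_n$ and letting $n\to\infty$ finishes the proof, since $\int_{\bb T^d}f_n\,dx\to\int_{\bb T^d}f\,dx$ and both the $L^2(\bb T^d)$ norm of $f_n-\int f_n\,dx$ and the gradient term $\sum_i\|\partial_{W_i}f_n\|_{x^i\x W_i}^2$ converge to their counterparts for $f$.

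The hard part will be justifying the spectral input $\lambda_*>0$, i.e.\ that $0$ is an isolated eigenvalue of $-\bb L_W$ whose eigenspace is precisely the constants. This rests on the fact, established in \cite{TC}, that each one-dimensional operator $\mc L_{W_k}$ has discrete spectrum $0=\mu_0^{(k)}<\mu_1^{(k)}\le\cdots\to\infty$ with kernel spanned by the constants. Since the eigenvalues of $-\bb L_W$ are the sums $\sum_k\mu_{n_k}^{(k)}$ of nonnegative terms, its kernel is spanned by products of constants, hence by the constants alone, and the smallest positive eigenvalue is $\lambda_*=\min_k\mu_1^{(k)}>0$. The one genuinely nontrivial point, which uses the Krein--Feller structure of these operators rather than soft functional analysis, is that the eigenvalues do not accumulate at $0$; everything else is routine.
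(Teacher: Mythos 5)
Your proof is correct, but it takes a genuinely different route from the paper. The paper argues directly: for $f\in\bb D_W$ it writes $f(x)-f(y)$ as a telescopic sum over coordinate directions, represents each one-dimensional increment as $\int_{y_i}^{x_i}\partial_{W_i}f\,dW_i$, and then applies Jensen's inequality together with $(\sum_i a_i)^2\le C\sum_i a_i^2$; the constant that comes out is explicit in terms of the total variations $W_i(1)-W_i(0)$. You instead identify the right-hand side with the Dirichlet form $\<-\bb L_W f,f\>$ via Lemma \ref{mudavari} and invoke a spectral gap for $-\bb L_W$, which is cleaner and in principle yields the sharp constant $C=1/\lambda_*$, but shifts the burden onto the spectral input. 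That input is legitimately available within the paper's framework, though you leave it somewhat compressed: the kernel of $\mc L_{W_k}$ is exactly the constants by Proposition \ref{dominiodw} (if $\mc L_{W_k}f=0$ then $f=a+bW_k$ and the boundary condition forces $b(W_k(1)-W_k(0))=0$, hence $b=0$ since $W_k$ is strictly increasing), and the isolation of $0$ in the spectrum follows from the compact embedding $H_{1,W_k}(\bb T)\subset L^2(\bb T)$ of Lemma \ref{rk-1} (compact resolvent, hence eigenvalues of finite multiplicity tending to $+\infty$), rather than needing any further Krein--Feller analysis. Your product-structure argument that the only element of $\mc A_W$ with eigenvalue $0$ is the constant, and that the smallest positive eigenvalue is $\min_k\mu_1^{(k)}$, is sound because all the one-dimensional eigenvalues are nonnegative; the final approximation step via Proposition \ref{aproxsuave} is the same as the paper's. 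In short: the paper's proof is more elementary and self-contained, yours is shorter modulo the spectral facts and identifies the optimal constant.
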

\begin{proof}
We begin by introducing some notations. For $x,y\in\bb T^d$, $i=0,\ldots,d$ and $t\in\bb T$, denote
$$z(x,y,i) = (x_1,\ldots,x_{d-i},y_{d-i+1},\ldots,y_d)\in\bb T^d$$
and
$$z(x,y,t,i) = (x_1,\ldots,x_{d-i},t,y_{d-i+2},\ldots,y_d)\in\bb T^d.$$

With this notation, we may write $f(x)-f(y)$ as the telescopic sum
$$f(x) - f(y) = \sum_{i=1}^d f(z(x,y,i-1)) - f(z(x,y,i)).$$
We are now in conditions to prove this Lemma. Let $f\in\bb D_W$, then
\begin{align*}
\Big\|f &- \int_{\bb T^d} fdx\Big\|_{L^2(\bb T^d)}^2 = \int_{\bb T^d} \Big[ \int_{\bb T^d} f(x) - f(y) dy\Big]^2 dx\\
&= \int_{\bb T^d} \Big[ \int_{\bb T^d} \sum_{i=1}^d \int_{y_i}^{x_i} \partial_{W_i}f(z(x,y,t,i)) dW_i(t) dy\Big]^2 dx\\
&\leq \int_{\bb T^d}\Big[ \int_{\bb T^d} \sum_{i=1}^d \int_{\bb T} \Big| \partial_{W_i}f(z(x,y,t,i)) \Big| dW_i(t) dy\Big]^2dx\\
&\leq \int_{\bb T^d} \Big[ \sum_{i=1}^d \int_{\bb T^{d-i+1}} \Big| \partial_{W_i}f(z(x,y,t,i))\Big| dW_{d-i}(t)\x y_{d-i+1}\x\cdots\x y_d\Big]^2dx\\
&\leq C \int_{\bb T^d} \sum_{i=1}^d \int_{\bb T^{d-i+1}} \Big|\partial_{W_i}f(z(x,y,t,i))\Big|^2 dW_{d-i}(t)\x dy_{d-i+1}\x\cdots\x dy_d dx\\
&= C\sum_{i=1}^d \int_{\bb T^d} \Big(\partial_{W_i}f\Big)^2 d(x^i\x W_i),
\end{align*}
where in the next-to-last inequality, we used Jensen's inequality and the elementary inequality $(\sum_i x_i)^2\leq C \sum_i x_i^2$ for some positive constant $C$.
To conclude the proof, one uses Proposition \ref{aproxsuave} to approximate functions in $H_{1,W}(\bb T^d)$ by functions in ${\bb D}_W$.
\end{proof}

\subsection{A Rellich-Kondrachov theorem}
In this subsection we prove an analogous of the Rellich-Kondrachov theorem for the $W$-Sobolev spaces. We begin by stating this result in dimension $1$, whose proof can be found in \cite[Lemma 3.3]{TC}.

\begin{lemma}\label{rk-1}
Fix some $k\in\{1,\ldots,d\}$. The embedding $H_{1,W_k}(\bb T) \subset L^2(\bb T)$ is compact.
\end{lemma}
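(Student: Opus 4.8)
The final statement to prove is the one-dimensional Rellich–Kondrachov theorem (Lemma \ref{rk-1}): the embedding $H_{1,W_k}(\bb T) \subset L^2(\bb T)$ is compact. Since this is stated for fixed $k$ and the setup is purely one-dimensional, I would drop the index and prove that a bounded sequence $(f_n)$ in $H_{1,W}(\bb T)$ admits a subsequence converging in $L^2(\bb T)$. The natural strategy is to exploit the explicit integral representation of elements of $H_{1,W}(\bb T)$ furnished by Corollary \ref{abscont}: each $f_n$ can be written as $f_n(x) = c_n + \int_{(0,x]} F_n(y)\, dW(y)$ with $F_n \in L^2_W(\bb T)$ of zero $W$-mean, and the $H_{1,W}$-norm controls both $|c_n|$ (via the $L^2$ part, after bounding the constant through the mean of $f_n$) and $\|F_n\|_{L^2_W}$. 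Thus the plan is to extract compactness from this representation rather than from any abstract embedding theorem.

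First I would use the boundedness of $(f_n)$ in $H_{1,W}(\bb T)$ to conclude that $(F_n)$ is bounded in the Hilbert space $L^2_W(\bb T)$ and that the constants $(c_n)$ are bounded in $\bb R$. By weak compactness of bounded sets in a Hilbert space, pass to a subsequence (not relabeled) along which $F_n \rightharpoonup F$ weakly in $L^2_W(\bb T)$ and $c_n \to c$ in $\bb R$. The key pointwise consequence is that for each fixed $x$, the functional $G \mapsto \int_{(0,x]} G\, dW$ is a bounded linear functional on $L^2_W(\bb T)$ (its representing function is the indicator $\mathbf 1_{(0,x]}$, which lies in $L^2_W$ since $W$ is finite on the torus), so weak convergence gives $f_n(x) \to c + \int_{(0,x]} F\, dW =: f(x)$ pointwise. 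Defining $f$ this way, Corollary \ref{abscont} identifies $f$ as an element of $H_{1,W}(\bb T)$.

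The main work is upgrading this pointwise convergence to $L^2(\bb T)$ convergence. Here I would establish uniform equicontinuity-type control: for any $x < x'$,
\[
|f_n(x') - f_n(x)| \;=\; \Big| \int_{(x,x']} F_n\, dW \Big| \;\le\; \|F_n\|_{L^2_W}\, \big(W(x') - W(x)\big)^{1/2},
\]
by Cauchy–Schwarz. Since $\|F_n\|_{L^2_W}$ is uniformly bounded and $W$ is càdlàg (hence its increments are uniformly controlled off the jump set and small on small intervals away from jumps), this gives a modulus of continuity for the $f_n$ measured against $W$. Combined with the uniform pointwise bound $\sup_x |f_n(x)| \le |c_n| + \|F_n\|_{L^2_W}\, W(\bb T)^{1/2}$, an Arzelà–Ascoli argument adapted to the $W$-metric (or, more directly, dominated convergence applied to the pointwise-convergent and uniformly bounded sequence $f_n \to f$) yields $f_n \to f$ in $L^2(\bb T)$.

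The hard part will be making the Arzelà–Ascoli/equicontinuity step genuinely rigorous in the presence of the discontinuities of $W$: the functions $f_n$ need not be equicontinuous in the ordinary sense at the jump points of $W$, so the compactness has to be phrased in the right metric or else routed through dominated convergence. The cleanest route is probably to avoid uniform convergence altogether and argue as follows: we already have $f_n \to f$ pointwise everywhere and a uniform bound $\sup_n \sup_x |f_n(x)| < \infty$; since $\bb T$ has finite Lebesgue measure and the bound is uniform, the dominated convergence theorem immediately gives $\|f_n - f\|_{L^2(\bb T)} \to 0$. This sidesteps the delicate equicontinuity issue entirely, and the Cauchy–Schwarz increment bound is then needed only to secure the uniform sup-bound. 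As the excerpt notes, the detailed argument is carried out in \cite[Lemma 3.3]{TC}, so I would defer the routine verifications there and present the representation-plus-weak-compactness skeleton above as the conceptual core.
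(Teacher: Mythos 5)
Your argument is correct. Note first that the paper does not actually prove this lemma: it is quoted from \cite[Lemma 3.3]{TC}, with only the remark that, by Proposition \ref{aproxsuave}, the energetic space for which the result is proved there coincides with the Sobolev space used here. So there is no in-paper proof to compare against, but your route --- the integral representation of Corollary \ref{abscont}, boundedness of the constants $c_n$ and of $F_n=\partial_{W}f_n$ in $L^2_W(\bb T)$, extraction of a weakly convergent subsequence, pointwise convergence $f_n(x)\to c+\int_{(0,x]}F\,dW$ obtained by testing the weak convergence against the indicators $\mathbf 1_{(0,x]}\in L^2_W(\bb T)$, and finally dominated convergence using the uniform sup-bound from Cauchy--Schwarz --- is a complete and self-contained proof, and it is essentially the argument one expects behind the cited lemma. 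You are also right that the decisive move is to bypass equicontinuity (which genuinely fails at the atoms of $dW$) in favor of pointwise convergence plus a uniform bound; the Cauchy--Schwarz increment estimate is then only needed for the sup-bound, exactly as you say. Two routine points worth making explicit in a written version: the representation $f_n(x)=c_n+\int_{(0,x]}F_n\,dW$ holds only Lebesgue-a.e., so one should work with the c\`adl\`ag representatives (which changes nothing in $L^2$); and the zero-mean condition $\int_{(0,1]}F\,dW=0$ for the limit follows by testing the weak convergence against $\mathbf 1_{(0,1]}$, so the limit $f$ indeed lies in $H_{1,W}(\bb T)$ by Corollary \ref{abscont}.
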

Recall that they proved this result for the energetic extension, but in view of Proposition \ref{aproxsuave}, this result holds for our Sobolev space $H_{1,W_k}(\bb T)$.

\begin{proposition}[Rellich-Kondrachov]
\label{f3}
The embedding $H_{1,W}(\bb T^d) \subset \bb L^2(\bb T^d)$ is compact.
\end{proposition}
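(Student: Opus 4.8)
The plan is to use the spectral structure already built into $\bb L_W$: I would diagonalize the inner product $\<\cdot,\cdot\>_{1,W}$ with respect to the complete orthonormal system $\mc A_W$ of eigenfunctions, and thereby reduce the compactness of the embedding $H_{1,W}(\bb T^d)\subset L^2(\bb T^d)$ to the single quantitative fact that the eigenvalues of $\bb L_W$ accumulate only at $+\infty$. The one-dimensional input is the following. For each $k$ enumerate $\mc A_{W_k}$ as $\{\varphi^{(k)}_n\}_{n\in\bb N}$, an orthonormal basis of $L^2(\bb T)$ with $\mc L_{W_k}\varphi^{(k)}_n=-\mu^{(k)}_n\varphi^{(k)}_n$; the $d=1$ case of Lemma \ref{mudavari} gives $\<-\mc L_{W_k}f,f\>=\|\partial_{W_k}f\|^2\ge 0$, so $\mu^{(k)}_n\ge 0$. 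I claim $\mu^{(k)}_n\to\infty$. Indeed each $\varphi^{(k)}_n$ lies in $H_{1,W_k}(\bb T)$ with $\|\varphi^{(k)}_n\|_{1,W_k}^2=1+\mu^{(k)}_n$; if $\mu^{(k)}_n$ stayed bounded along a subsequence, Lemma \ref{rk-1} would extract from the orthonormal family an $L^2$-convergent sub-subsequence, which is impossible since $\|\varphi^{(k)}_n-\varphi^{(k)}_m\|_{L^2}=\sqrt2$ for $n\ne m$.

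Next I would assemble the $d$-dimensional picture. The products $\varphi_{\mathbf{n}}:=\prod_{k=1}^d\varphi^{(k)}_{n_k}$, indexed by $\mathbf{n}=(n_1,\ldots,n_d)\in\bb N^d$, form the orthonormal basis $\mc A_W$ of $L^2(\bb T^d)$, lie in $\bb D_W$, and are eigenfunctions of $\bb L_W$ with eigenvalue $-\mu_{\mathbf{n}}$, where $\mu_{\mathbf{n}}=\sum_{k=1}^d\mu^{(k)}_{n_k}\ge 0$. By \eqref{f51} and the eigenfunction relation, $\<\varphi_{\mathbf{n}},\varphi_{\mathbf{m}}\>_{1,W}=(1+\mu_{\mathbf{n}})\,\delta_{\mathbf{n},\mathbf{m}}$; since $\bb D_W$ is dense in $H_{1,W}(\bb T^d)$ and $\<\cdot,\cdot\>_{1,W}$ is continuous, the system $\{\varphi_{\mathbf{n}}\}$ diagonalizes the $(1,W)$-inner product, so that every $f=\sum_{\mathbf{n}}c_{\mathbf{n}}\varphi_{\mathbf{n}}\in H_{1,W}(\bb T^d)$ satisfies
\[
\|f\|_{L^2(\bb T^d)}^2=\sum_{\mathbf{n}}|c_{\mathbf{n}}|^2,\qquad \|f\|_{1,W}^2=\sum_{\mathbf{n}}(1+\mu_{\mathbf{n}})|c_{\mathbf{n}}|^2.
\]
The decisive structural consequence of the first step is that the sublevel set $S_R:=\{\mathbf{n}:\mu_{\mathbf{n}}\le R\}$ is finite for every $R\ge 0$: from $\mu_{\mathbf{n}}\le R$ and non-negativity of the summands one gets $\mu^{(k)}_{n_k}\le R$ for each $k$, and for each $k$ only finitely many indices satisfy this since $\mu^{(k)}_n\to\infty$.

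Finally I would run the standard truncation-plus-diagonal argument. Let $(f_m)$ be bounded in $H_{1,W}(\bb T^d)$, say $\|f_m\|_{1,W}^2\le M$, and write $f_m=\sum_{\mathbf{n}}c^m_{\mathbf{n}}\varphi_{\mathbf{n}}$. Fix $\epsilon>0$ and set $S_\epsilon:=\{\mathbf{n}:\mu_{\mathbf{n}}\le 1/\epsilon\}$, a finite set. For $\mathbf{n}\notin S_\epsilon$ one has $1\le \epsilon(1+\mu_{\mathbf{n}})$, whence the tail is uniformly small:
\[
\sum_{\mathbf{n}\notin S_\epsilon}|c^m_{\mathbf{n}}|^2\le \epsilon\sum_{\mathbf{n}\notin S_\epsilon}(1+\mu_{\mathbf{n}})|c^m_{\mathbf{n}}|^2\le \epsilon M.
\]
On the finite set $S_\epsilon$ the vectors $(c^m_{\mathbf{n}})_{\mathbf{n}\in S_\epsilon}$ are bounded in a finite-dimensional space, so Bolzano-Weierstrass yields a convergent subsequence. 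Applying this with $\epsilon=1/j$ and extracting a diagonal subsequence produces a subsequence along which the finite parts converge while the tails stay below $\epsilon M$; since $\|f_m-f_{m'}\|_{L^2}^2$ splits into a controlled finite part and a uniformly small tail, this subsequence is Cauchy in $L^2(\bb T^d)$, proving compactness.

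I expect the only genuinely substantive step to be the conversion of the one-dimensional compact embedding of Lemma \ref{rk-1} into the spectral growth $\mu^{(k)}_n\to\infty$, together with the ensuing finiteness of the sublevel sets $S_R$; once these are in hand, everything reduces to the classical Hilbert-space compactness argument for form domains of operators with compact resolvent. A secondary point requiring care is the justification of the Parseval identity for $\|\cdot\|_{1,W}$ on all of $H_{1,W}(\bb T^d)$ rather than only on $\bb D_W$, which follows from the density of $\bb D_W$ and the continuity of $\<\cdot,\cdot\>_{1,W}$ recorded in the energetic-space construction.
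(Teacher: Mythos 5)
Your proof is correct, but it takes a genuinely different route from the paper's. The paper argues directly on finite linear combinations: it writes a bounded sequence $v_n=\sum_j\alpha_j^n\prod_k g_{k,j}^n$ of elements of $\bb D_W$, applies the one-dimensional compact embedding of Lemma \ref{rk-1} to each factor sequence $g_{k,j}^n$, and uses Cantor's diagonal method to assemble a limit $v=\sum_j\alpha_j g_j$, finishing with a three-epsilon estimate; this requires some bookkeeping of nested subsequences, of the coefficients $\alpha_j^n$, and an initial reduction to sequences lying in $\bb D_W$. You instead spend Lemma \ref{rk-1} once, to show that each $\mc L_{W_k}$ has eigenvalues accumulating only at $-\infty$ (an orthonormal family cannot be $L^2$-precompact), then diagonalize $\<\cdot,\cdot\>_{1,W}$ in the product eigenbasis via \eqref{f51} and run the classical compactness criterion for form domains: finite sublevel sets of $\mu_{\mathbf n}$ plus a tail bound $\sum_{\mathbf n\notin S_\epsilon}|c^m_{\mathbf n}|^2\le\epsilon M$ uniform over the bounded set. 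Your route buys a cleaner and fully quantitative argument — the uniform tail estimate replaces the paper's somewhat delicate diagonal extraction, and no separate reduction to $\bb D_W$ is needed since every $f\in H_{1,W}(\bb T^d)$ is handled through its coefficients — at the mild cost of having to justify the Parseval identity for $\|\cdot\|_{1,W}$ on all of $H_{1,W}(\bb T^d)$, which you correctly flag and which follows from density of $\bb D_W$ and a Fatou argument on admissible sequences. As a by-product your argument exhibits $H_{1,W}(\bb T^d)$ as the form domain of an operator with discrete spectrum, which connects naturally with Remark \ref{a-adj}.
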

\begin{proof}
We will outline the strategy of the proof. Using the definition of the set $\bb D_W$ and the fact that it is dense in $H_{1,W}(\bb T^d)$, it is enough to show this fact for sequences in $\bb D_W$. From this point, the main tool is Lemma \ref{rk-1} and Cantor's diagonal method to obtain converging subsequences.

We begin by noting that by Proposition \ref{aproxsuave}, it is enough to prove that the embed $\bb D_W\subset L^2(\bb T^d)$ is compact.

Let $C>0$ and consider a sequence $(v_n)_{n\in\bb N}$ in $\bb D_W$, with $\|v_n\|_{1,W}\leq C$ for all $n\in\bb N$. We have, by definition of $\bb D_W$ (see the definition at the beginning of Section \ref{sec2}), that each $v_n$ can be expressed as a finite linear combination of elements in $\mc A_W$. Furthermore, each element in $\mc A_W$ is a product of elements in $\mc A_{W_k}$ for $k=1,\ldots,d.$ Therefore, we can write $v_n$ as 
$$v_n = \sum^{N(n)}_{j=1}{\alpha^n_j\prod^d_{k=1}{g^n_{k,j}}} = \sum^{N(n)}_{j=1}{\alpha^n_jg^n_{j}},$$
where $g^n_{k,j}\in\mc A_{W_k},\alpha^n_j\in\bb R, g_j^n = \prod_{k=1}^d g_{j,k}^n,$ and $N(n)$ is chosen such that $N(n)\geq n$ (we can complete with zeros if necessary). Recall that these functions $g^n_{k,j}$ have $\|g^n_{k,j}\|_{L^2(\bb T)} = 1$, and hence, $\|g_j^n\|_{L^2(\bb T^d)}=1$. Moreover, the set $\{g_1^n,\ldots,g_{N(n)}^n\}$ is orthogonal in $L^2(\bb T^d)$.

From orthogonality, we obtain that 
$$\sum_{j=1}^{N(n)} (\alpha_j^n)^2 \leq C^2,\qquad\hbox{uniformly in~} n\in \bb N.$$
 
Note that the uniform boundedness of $v_n$ in $H_{1,W}(\bb T^d)$ implies the uniform boundedness of $\|g_{k,j}^n\|_{1,W_k}$, for all $k=1,\ldots,d$, $j=1,\ldots,N(n)$ and $n\in\bb N$. Our goal now is to apply Lemma \ref{rk-1} to our current setup.

Consider the sequence of functions $\alpha_1^n g_{1,1}^n$ in $H_{1,W_1}(\bb T)$. By Lemma \ref{rk-1}, this sequence has a converging subsequence, and we call the limit point $\alpha_1 g_{1,1}$. Repeat this step $d-1$ times for the sequences $g_{k,1}^n$ in $H_{1,W_k}(\bb T)$, for $k=2,\ldots,d$, considering in each step a subsequence of the previous step, to obtain converging subsequences, and call their limit points $g_{k,1}$. At the end of this procedure, we obtain a converging subsequence of $\prod_{k=1}^d\alpha_1^n g_{1,k}^n$, with limit point $\prod_{k=1}^d \alpha_1 g_{1,k}\in L^2(\bb T^d)$, which we will denote by $\alpha_1 g_1$. 

In the $j$th step, in which we want to obtain the limit point $\alpha_j g_j$, we repeat the previous idea, with the sequences $\alpha_j^n g_{j,1}^n$ and $g_{j,k}^n$, with $n\leq j$ and $k=2,\ldots,d$. We note that it is always necessary to consider a subsequence of all the previous steps.

This procedure provides limiting functions $\alpha_j g_j$, for all $j\in\bb N$. From now on, we use the notation $v_n$ to mean the diagonal sequence obtained to ensure the convergence of the functions $\alpha_j^n g_j^n$ to $\alpha_j g_j$. We claim that the function
$$v = \sum_{j=1}^\infty \alpha_j g_j$$
is well-defined and belongs to $L^2(\bb T^d)$. To prove this claim, note that the set $\{g_k\}_{k\in\bb N}$ is orthonormal by the continuity of the inner product. Suppose that there exists $N\in\bb N$ such that
$$\sum_{j=1}^N (\alpha_j)^2 > C^2.$$
We have that the sequence of functions 
$$v_n^N := \sum_{j=1}^N \alpha_j^n g^n_{j}$$
converges to 
$$v^N := \sum_{j=1}^N \alpha_j g_{j}.$$
Since $\|v_n^N\| \leq C$ uniformly in $n\in\bb N$, this yields a contradiction. Therefore $v\in L^2(\bb T^d)$ with the bound $\|v\|\leq C$. 

It remains to be proved that $v_n$ has a subsequence that converges to $v$. Choose $N$ so large that $\|v-v^N\|<{\epsilon}/{3}$, $\|v_n^N -v^N\|<\epsilon/3$ and $\|v_n^N - v_n\|<\epsilon/3$, and use the triangle inequality to conclude the proof.

\end {proof}

\subsection{The space $H^{-1}_W(\bb T^d)$}\label{dual2}
Let $H^{-1}_W(\bb T^d)$ be the dual space to $H_{1,W}(\bb T^d)$, that is, $H^{-1}_W(\bb T^d)$ is the set of bounded linear functionals on $H_{1,W}(\bb T^d)$. Our objective in this subsection is to characterize the elements of this space. This proof is based on the characterization of the dual of the standard Sobolev space in $\bb R^d$ (see \cite{E}).

We will write $(\cdot,\cdot)$ to denote the pairing between $H^{-1}_W(\bb T^d)$ and $H_{1,W}(\bb T^d)$.

\begin{lemma}
$f\in H^{-1}_{W}(\bb T^d)$ if and only if there exist functions 
$f_0\in L^2(\bb T^d),$ and $f_k\in L^2_{x^k\x W_k,0}(\bb T^d)$, such that
\begin{equation}\label{dual}
f = f_0 - \sum_{i=1}^d\partial_{x_i}f_i,
\end{equation}
in the sense that for $v\in H_{1,W}(\bb T^d)$
$$(f,v) = \int_{\bb T^d} f_0 v dx + \sum_{i=1}^d \int_{\bb T^d} f_i (\partial_{W_i}v) d(x^i\x W_i).$$
Furthermore,
$$\|f\|_{H^{-1}_W} = \inf\left\{\left(\int_{\bb T^d} \sum_{i=0}^d |f_i|^2 dx\right)^{1/2} ;\quad f\hbox{~satisfies \eqref{dual}}\right\}.$$
\end{lemma}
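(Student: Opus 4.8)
The plan is to combine the Riesz representation theorem on the Hilbert space $H_{1,W}(\bb T^d)$ with the explicit expansion \eqref{inner} of the inner product $\<\cdot,\cdot\>_{1,W}$. The representation formula then falls out of reading off the two terms of that inner product, and the norm identity comes from observing that the Riesz representative is precisely the minimizer of the infimum.

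First I would dispatch the easy implication. Suppose $f$ is given by \eqref{dual} with $f_0\in L^2(\bb T^d)$ and $f_i\in L^2_{x^i\x W_i,0}(\bb T^d)$. For $v\in H_{1,W}(\bb T^d)$, two applications of Cauchy--Schwarz give
$$|(f,v)| \;\le\; \|f_0\|_{L^2(\bb T^d)}\|v\|_{L^2(\bb T^d)} + \sum_{i=1}^d \|f_i\|_{x^i\x W_i}\|\partial_{W_i}v\|_{x^i\x W_i} \;\le\; \Big(\sum_{i=0}^d \|f_i\|^2\Big)^{1/2}\|v\|_{1,W},$$
where the last step uses Cauchy--Schwarz in $\bb R^{d+1}$ together with the definition of $\|\cdot\|_{1,W}$. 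Hence $f\in H^{-1}_W(\bb T^d)$ with $\|f\|_{H^{-1}_W}\le(\sum_{i=0}^d\|f_i\|^2)^{1/2}$; since this holds for \emph{every} representation, $\|f\|_{H^{-1}_W}\le\inf\{\cdots\}$.

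For the converse, fix $f\in H^{-1}_W(\bb T^d)$. Because $H_{1,W}(\bb T^d)$ is a Hilbert space (Lemma \ref{sobolevhilbert} with Proposition \ref{aproxsuave}), the Riesz representation theorem yields a unique $u\in H_{1,W}(\bb T^d)$ with $(f,v)=\<u,v\>_{1,W}$ for all $v$ and $\|f\|_{H^{-1}_W}=\|u\|_{1,W}$. Expanding via \eqref{inner},
$$(f,v) \;=\; \int_{\bb T^d} u\,v\,dx \;+\; \sum_{i=1}^d \int_{\bb T^d} (\partial_{W_i}u)(\partial_{W_i}v)\,d(x^i\x W_i),$$
so taking $f_0=u$ and $f_i=\partial_{W_i}u$ (which lies in $L^2_{x^i\x W_i,0}(\bb T^d)$ by the definition of the generalized weak derivative) realizes \eqref{dual}, proving the "only if" direction.

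Finally, for the norm identity I would note that this Riesz representation attains the infimum: with $f_0=u$ and $f_i=\partial_{W_i}u$ one has $\sum_{i=0}^d\|f_i\|^2=\|u\|_{L^2}^2+\sum_i\|\partial_{W_i}u\|_{x^i\x W_i}^2=\|u\|_{1,W}^2=\|f\|_{H^{-1}_W}^2$, whence $\inf\{\cdots\}\le\|f\|_{H^{-1}_W}$. Combined with the reverse bound from the first step, equality holds and the infimum is achieved. I do not expect a genuine obstacle in this argument; the one point demanding care is the bookkeeping of measures, since $f_0$ is measured in $L^2(\bb T^d)$ against Lebesgue measure while each $f_i$ with $i\ge 1$ is measured in $L^2_{x^i\x W_i,0}(\bb T^d)$. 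Thus the symbol $\sum_{i=0}^d\|f_i\|^2$ in the statement must be read with the appropriate inner product in each slot, matching exactly the factors appearing in the Cauchy--Schwarz estimate above.
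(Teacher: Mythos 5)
Your proposal is correct and follows essentially the same route as the paper: apply the Riesz representation theorem in the Hilbert space $H_{1,W}(\bb T^d)$, read off $f_0=u$ and $f_i=\partial_{W_i}u$ from the expansion of $\<u,v\>_{1,W}$, and obtain the norm identity by combining the Cauchy--Schwarz lower bound valid for every representation with the fact that the Riesz representative attains it. The only cosmetic difference is that you bound $|(f,v)|$ directly by Cauchy--Schwarz for an arbitrary representation, whereas the paper tests the identity at $v=u$ to reach the same inequality; your remark on reading each $\|f_i\|$ in its own measure is a fair clarification of the paper's slightly abusive notation.
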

\begin{proof}
Let $f\in H^{-1}_W(\bb T^d)$. Applying the Riesz Representation Theorem, we deduce the existence of a unique function $u\in H_{1,W}(\bb T^d)$ satisfying $(f,v) = \<u,v\>_{1,W}$, for all $v\in H_{1,W}(\bb T^d)$, that is
\begin{equation}\label{carac}
\int_{\bb T^d} u v dx + \sum_{j=1}^d \int_{\bb T^d} (\partial_{W_j}u)( \partial_{W_j}v) d(x^j\x W_j)= (f,v),\quad\hbox{~~for all~~}v\in H_{1,W}(\bb T^d).
\end{equation}

This establishes the first claim of the Lemma for $f_0 = u$ and $f_i = \partial_{W_i} u$, for $i=1,\ldots,d$.

Assume now that $f\in H_{W}^{-1}(\bb T^d)$,
\begin{equation}\label{carac2}
(f,v) = \int_{\bb T^d} g_0 v dx + \sum_{i=1}^d \int_{\bb T^d} g_i (\partial_{W_i}v) d(x^i\x W_i),
\end{equation}
for $g_0, g_1,\ldots , g_d \in L^2_{x^j\x W_j,0}(\bb T^d)$. Setting $v=u$ in \eqref{carac}, using \eqref{carac2}, and applying the Cauchy-Schwartz inequality twice, we deduce
\begin{equation}\label{carac3}
\|u\|_{1,W}^2\le \int_{\bb T^d}g_0^2dx + \sum_{i=1}^d\int_{\bb T^d}\partial_{W_i}g_i^2d(x^i\x W_i).
\end{equation}
From \eqref{carac} it follows that
$$|(f,v)|\le \|u\|_{1,W}$$
if $\|v\|_{1,W}\le 1$. Consequently
$$\|f\|_{H^{-1}_W}\le \|u\|_{1,W}.$$
Setting $v = u/\|u\|_{1,W}$ in \eqref{carac}, we deduce that, in fact,
$$\|f\|_{H^{-1}_W}= \|u\|_{1,W}.$$
The result now follows from the above expression and equation \eqref{carac3}.
\end{proof}

\section{$W$-Generalized elliptic equations}
\label{sec3}
This subsection investigates the solvability of uniformly elliptic generalized partial differential equations defined below.
Energy methods within Sobolev spaces are, essentially, the techniques exploited.

Let $A=(a_{ii}(x))_{d\times d}$, $x\in \bb T^d$, be a diagonal matrix function such that there exists a constant $\theta>0$ satisfying
\begin{equation}\label{cota A}
\theta^{-1}\le a_{ii}(x) \le \theta,
\end{equation}
for every $x\in \bb T^d$ and $i=1,\ldots,d$. To keep notation simple, we write $a_i(x)$ to mean $a_{ii}(x)$.

Our interest lies on the study of the problem
\begin{equation}\label{prob T}
T_{\lambda}u = f,
\end{equation}
where $u:\bb T^d \to \bb R$ is the unknown function and $f:\bb T^d \to \bb R$ is given. Here $T_{\lambda}$ denotes the generalized elliptic operator
\begin{equation}\label{def T}
  T_{\lambda}u\; :=\;\lambda u - \nabla A\nabla_Wu\;:=\;\lambda u - \sum_{i=1}^d\partial_{x_i}\Big(a_i(x)\partial_{W_i} u\Big).
\end{equation}

The bilinear form $B[\cdot,\cdot]$ associated with the elliptic operator $T_{\lambda}$ is given by
\begin{equation}\label{def B}
B[u,v] = \lambda\<u,v\> + \sum_{i=1}^d\int a_i(x)(\partial_{W_i}u)(\partial_{W_i}v)\;d(W_i\otimes x_i),
\end{equation}
where $u,v\in H_{1,W}(\bb T^d)$.

Let $f\in H^{-1}_W(\bb T^d)$. A function $u\in H_{1,W}(\bb T^d)$ is said to be a weak solution of the equation
$T_\lambda u = f\;$ if 
\begin{equation*}
B[u,v]\;=\;(f,v) \;\;\text{for all}\;\;v\in H_{1,W}(\bb T^d).
\end{equation*}

Recall a classic result from linear functional analysis, which provides in certain circumstances the existence and
uniqueness of weak solutions of our problem, and whose proof can be found, for instance, in \cite{E}. Let $\mc H$ be a Hilbert space endowed with inner product  $<\!\cdot,\cdot\!>$ and norm $\||\cdot\||$. Also,  $(\cdot,\cdot)$ denotes the pairing of $\mc H$ with its dual space.
\begin{theorem}[Lax-Milgram Theorem] 
Assume that $\textit{B} : \mc H\times \mc H\to \bb R$ is a bilinear mapping on Hilbert space $\mc H$, for which there exist constants $\alpha>0$ and $\beta >0$ such that for all $u,v\in\mc H$,
\begin{equation*}
|\textit{B}[u,v]|\le \alpha\||u\||\cdot\||v\||\;\;\text{and}\;\;\textit{B}[u,u]\ge \beta\||u\||^2.
\end{equation*}

Let $f:\mc H\to\bb R$ be a bounded linear functional on $\mc H$. Then there exists a unique element $u\in\mc H$ such that
$$\textit{B}[u,v] = (f,v),$$
for all $v\in\mc H$.
\end{theorem}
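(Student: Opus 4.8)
The plan is to encode the bilinear form $B$ as a bounded linear operator on $\mc H$ via the Riesz Representation Theorem and then to show that this operator is a bijection, using boundedness for the operator's continuity and coercivity for its invertibility. First I would fix $u\in\mc H$ and observe that $v\mapsto B[u,v]$ is a bounded linear functional, since $|B[u,v]|\le\alpha\||u\||\,\||v\||$. By Riesz there is a unique element $Au\in\mc H$ with $B[u,v]=\<Au,v\>$ for all $v\in\mc H$. The uniqueness in the Riesz representation together with the bilinearity of $B$ forces $A:\mc H\to\mc H$ to be linear, and the estimate $\||Au\||^2=\<Au,Au\>=B[u,Au]\le\alpha\||u\||\,\||Au\||$ shows $\||Au\||\le\alpha\||u\||$, so $A$ is bounded.

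The core of the argument is to prove that $A$ is a bijection. Coercivity gives $\beta\||u\||^2\le B[u,u]=\<Au,u\>\le\||Au\||\,\||u\||$, hence the lower bound $\beta\||u\||\le\||Au\||$ for every $u\in\mc H$. This immediately yields injectivity of $A$, and it also shows that the range $R(A)$ is closed: if $Au_n$ is Cauchy, then $\beta\||u_n-u_m\||\le\||Au_n-Au_m\||$ forces $u_n$ to be Cauchy, and by continuity of $A$ its limit maps to the limit of $Au_n$. The main obstacle, and the only genuinely substantive step, is surjectivity, which I would establish by contradiction. If $R(A)$ were a proper closed subspace, there would exist a nonzero $w\in\mc H$ orthogonal to $R(A)$; but then, since $Aw\in R(A)$, we would have $\beta\||w\||^2\le B[w,w]=\<Aw,w\>=0$, forcing $w=0$, a contradiction. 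Thus $R(A)=\mc H$ and $A$ is onto.

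Finally I would transfer existence and uniqueness back to the given functional. Applying the Riesz Representation Theorem to the bounded linear functional $f$ produces a unique $\phi\in\mc H$ with $(f,v)=\<\phi,v\>$ for all $v\in\mc H$. Since $A$ is a bijection, there is a unique $u\in\mc H$ with $Au=\phi$, and then $B[u,v]=\<Au,v\>=\<\phi,v\>=(f,v)$ for every $v\in\mc H$, which is exactly the asserted weak solution. Its uniqueness follows from the injectivity of $A$, or equivalently directly from coercivity: if $u_1,u_2$ both solve the equation, then $B[u_1-u_2,v]=0$ for all $v$, and taking $v=u_1-u_2$ gives $\beta\||u_1-u_2\||^2\le 0$. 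Everything apart from the surjectivity step is routine bookkeeping with the two defining inequalities for $B$.
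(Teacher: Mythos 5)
Your proof is correct and complete; it is the standard argument for the Lax--Milgram theorem (representing $B$ by a bounded operator $A$ via Riesz, then using coercivity to get injectivity, closed range, and surjectivity via the orthogonal complement). The paper itself does not prove this statement --- it only recalls it and cites Evans \cite{E} --- and your argument is essentially the one found in that reference, so there is nothing to reconcile.
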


Return now to the specific bilinear form $B[\cdot,\cdot]$ defined in \eqref{def B}. Our goal now is to verify the hypothesis of Lax-Milgram Theorem for our setup. We consider the cases $\lambda=0$ and $\lambda>0$ separately. We begin by analyzing the case in which $\lambda =0$.

Let $H^\bot_{1,W}(\bb T^d)$ be the set of functions in $H_{1,W}(\bb T^d)$ which are orthogonal to the constant functions:
\begin{equation*}
H^\bot_{1,W}(\bb T^d)\;=\;\{f\in H_{1,W}(\bb T^d); \int_{\bb T^d}f\;dx =0\}.
\end{equation*}

The space $H^\bot_{1,W}(\bb T^d)$ is the natural environment to treat elliptic operators with Neumann condition.

\begin{proposition}[Energy estimates for $\lambda =0$]\label{lm-0}
Let $\textit{B}$ be the bilinear form on $H_{1,W}(\bb T^d)$ defined in \eqref{def B} with $\lambda =0$. There exist constants 
$\alpha>0$ and $\beta >0$ such that for all $u,v\in H_{1,W}(\bb T^d)$,
\begin{equation*}
|\textit{B}[u,v]|\le \alpha\|u\|_{1,W}\;\|v\|_{1,W}
\end{equation*}
and for all $u\in H^\bot_{1,W}$ 
$$ \textit{B}[u,u]\ge \beta\|u\|^2_{1,W}.$$
\end{proposition}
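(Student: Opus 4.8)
The plan is to verify the two Lax--Milgram hypotheses for the bilinear form $B[\cdot,\cdot]$ in \eqref{def B} with $\lambda=0$, namely boundedness on all of $H_{1,W}(\bb T^d)$ and coercivity on the subspace $H^\bot_{1,W}(\bb T^d)$. Both estimates will follow from the uniform ellipticity bounds \eqref{cota A} on the diagonal coefficients $a_i(x)$ together with the Poincar\'e inequality of Corollary \ref{poinc}.

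First I would establish boundedness. Since $\lambda=0$, the form reduces to $B[u,v]=\sum_{i=1}^d\int a_i(x)(\partial_{W_i}u)(\partial_{W_i}v)\,d(x^i\x W_i)$. Using the upper bound $a_i(x)\le\theta$ from \eqref{cota A} and applying the Cauchy--Schwarz inequality in each $L^2_{x^i\x W_i}(\bb T^d)$, I obtain
\begin{equation*}
|B[u,v]|\;\le\;\theta\sum_{i=1}^d\|\partial_{W_i}u\|_{x^i\x W_i}\,\|\partial_{W_i}v\|_{x^i\x W_i}.
\end{equation*}
A further application of the discrete Cauchy--Schwarz inequality over the index $i$ bounds this by $\theta\,\big(\sum_i\|\partial_{W_i}u\|_{x^i\x W_i}^2\big)^{1/2}\big(\sum_i\|\partial_{W_i}v\|_{x^i\x W_i}^2\big)^{1/2}$, and each factor is dominated by the full norm $\|\cdot\|_{1,W}$ defined in \eqref{inner}. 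This yields the first inequality with $\alpha=\theta$.

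Next I would prove coercivity on $H^\bot_{1,W}(\bb T^d)$. Taking $v=u$ and using the lower bound $a_i(x)\ge\theta^{-1}$ gives $B[u,u]\ge\theta^{-1}\sum_{i=1}^d\|\partial_{W_i}u\|_{x^i\x W_i}^2=\theta^{-1}\|\nabla_W u\|_{L^2_W(\bb T^d)}^2$. The point of restricting to $H^\bot_{1,W}(\bb T^d)$ is that for such $u$ the mean $\int_{\bb T^d}u\,dx$ vanishes, so Corollary \ref{poinc} applies directly and gives $\|u\|_{L^2(\bb T^d)}^2\le C\|\nabla_W u\|_{L^2_W(\bb T^d)}^2$. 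Combining these, the full squared norm satisfies $\|u\|_{1,W}^2=\|u\|_{L^2}^2+\|\nabla_W u\|_{L^2_W}^2\le(C+1)\|\nabla_W u\|_{L^2_W}^2\le(C+1)\theta\,B[u,u]$, which is the desired coercivity with $\beta=\big((C+1)\theta\big)^{-1}$.

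The only delicate point, and the one I would treat most carefully, is the role of the orthogonality constraint: without it the gradient seminorm does not control the $L^2$ norm (constants have zero $W$-gradient), so coercivity genuinely fails on the whole space and Corollary \ref{poinc} is exactly the tool that repairs this on $H^\bot_{1,W}(\bb T^d)$. The boundedness step is routine once the two nested Cauchy--Schwarz applications are arranged correctly; the coercivity step hinges entirely on invoking the Poincar\'e inequality on the right subspace, which is why the proposition is stated with the asymmetry between boundedness (on $H_{1,W}$) and coercivity (on $H^\bot_{1,W}$).
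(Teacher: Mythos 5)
Your proof is correct and follows essentially the same route as the paper: boundedness from the upper bound $a_i\le\theta$ in \eqref{cota A} together with Cauchy--Schwarz, and coercivity on $H^\bot_{1,W}(\bb T^d)$ by combining the lower bound $a_i\ge\theta^{-1}$ with the Poincar\'e inequality of Corollary \ref{poinc}. The paper's proof is just a terser version of the same argument, and your explicit constants $\alpha=\theta$ and $\beta=((C+1)\theta)^{-1}$ are consistent with it.
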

\begin{proof}
By \eqref{cota A}, the computation of the upper bound $\alpha$ easily follows. For the lower bound $\beta$, we have for $u\in H^{\bot}_{1,W}(\bb T^d)$,
\begin{equation*}
\|u\|^2_{1,W} = \int_{\bb T^d}u^2\;dx + \sum_{i=1}^d\int_{\bb T^d}\Big(\partial_{W_i} u\Big)^2d(x^i\otimes W_i).
\end{equation*}
Using Poincaré's inequality and \eqref{cota A}, we obtain a constant $C>0$ such that the previous expression is bounded above by
$$C\int_{\bb T^d}\Big(\partial_{W_i} u\Big)^2d(x^i\otimes W_i)\le C\textit{B}[u,u].$$
The lemma follows from the previous estimates.
\end{proof}

\begin{corollary}
Let $f\in L^2(\bb T^d)$. There exists a weak solution $u\in H_{1,W}(\bb T^d)$ for the equation 
\begin{equation}\label{pc}
\nabla A\nabla_Wu\;=\;f
\end{equation}
if and only if 
\begin{equation*}
\int_{\bb T^d}f dx\;=\;0.
\end{equation*}
In this case, we have uniquenesses of the weak solutions if we disregard addition by constant functions. Also, let $u$ be the unique weak solution of \eqref{pc} in $H^\bot_{1,W}(\bb T^d)$. Then
$$\|u\|_{1,W}\le C\|f\|_{L^2(\bb T^d)},$$
for some constant $C$ independent of $f$.
\end{corollary}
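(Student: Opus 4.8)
The plan is to reduce everything to an application of the Lax--Milgram Theorem on the closed subspace $H^\bot_{1,W}(\bb T^d)$, where Proposition \ref{lm-0} supplies exactly the boundedness and coercivity hypotheses that are needed. The whole argument hinges on two facts that must be kept in play throughout: constant functions have vanishing generalized derivatives $\partial_{W_i}$, and the mean-zero condition on $f$ is what makes the constant directions harmless.

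First I would dispose of necessity. If $u$ is a weak solution, then $B[u,v] = (f,v) = \int_{\bb T^d} f v \, dx$ for every $v\in H_{1,W}(\bb T^d)$; testing against the constant function $v\equiv 1$, whose derivatives $\partial_{W_i}v$ all vanish, forces $B[u,1]=0$ and hence $\int_{\bb T^d} f\, dx = (f,1)=0$.

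For sufficiency, assume $\int_{\bb T^d} f\, dx = 0$. Since $H^\bot_{1,W}(\bb T^d)$ is the kernel of the bounded functional $v\mapsto \int_{\bb T^d} v\, dx$, it is a closed subspace of $H_{1,W}(\bb T^d)$, hence itself a Hilbert space under $\<\cdot,\cdot\>_{1,W}$. The map $v\mapsto (f,v)$ is a bounded linear functional on it, because $f\in L^2(\bb T^d)$ and the embedding $H_{1,W}\subset L^2(\bb T^d)$ is continuous, and Proposition \ref{lm-0} provides both $|B[u,v]|\le \alpha\|u\|_{1,W}\|v\|_{1,W}$ and $B[u,u]\ge \beta\|u\|_{1,W}^2$ on $H^\bot_{1,W}$. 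The Lax--Milgram Theorem then yields a unique $u\in H^\bot_{1,W}(\bb T^d)$ with $B[u,v]=(f,v)$ for all $v\in H^\bot_{1,W}(\bb T^d)$. The step I expect to require the most care is upgrading this to a weak solution on the full space: given arbitrary $v\in H_{1,W}(\bb T^d)$, I would decompose $v = v^\bot + c$ with $c=\int_{\bb T^d} v\, dx$ constant and $v^\bot\in H^\bot_{1,W}$; since the $\partial_{W_i}$ of a constant vanish one has $B[u,v]=B[u,v^\bot]$, while the hypothesis $\int f = 0$ gives $(f,v)=(f,v^\bot)$, so $B[u,v]=(f,v)$ for every $v$. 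This is precisely where both the mean-zero constraint on $f$ and the fact that constants lie in the kernel of $B$ are indispensable.

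Uniqueness up to constants and the a priori bound then follow by the standard coercivity argument. If $u_1,u_2$ are two weak solutions, I would set $w=u_1-u_2$ and $w^\bot = w-\int_{\bb T^d} w\, dx$; testing $B[w,\cdot]=0$ against $w^\bot$ and using $B[w,w^\bot]=B[w^\bot,w^\bot]$ together with coercivity gives $\beta\|w^\bot\|_{1,W}^2\le 0$, so $w$ is constant. Finally, for the distinguished solution $u\in H^\bot_{1,W}$, testing against $v=u$ and combining coercivity, Cauchy--Schwarz, and the continuous embedding $\|u\|_{L^2}\le \|u\|_{1,W}$ yields $\beta\|u\|_{1,W}^2\le B[u,u]=(f,u)\le \|f\|_{L^2(\bb T^d)}\|u\|_{1,W}$, whence $\|u\|_{1,W}\le \beta^{-1}\|f\|_{L^2(\bb T^d)}$, so one may take $C=\beta^{-1}$.
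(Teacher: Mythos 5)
Your proof is correct and follows essentially the same route as the paper: necessity by testing against $v\equiv 1$, sufficiency via Lax--Milgram on $H^\bot_{1,W}(\bb T^d)$ using the energy estimates of Proposition \ref{lm-0}, and the bound from coercivity plus Cauchy--Schwarz. If anything, you are more careful than the paper, which leaves implicit both the extension of the identity $B[u,v]=(f,v)$ from $H^\bot_{1,W}$ to all of $H_{1,W}$ and the uniqueness-up-to-constants argument that you spell out via the decomposition $v=v^\bot+c$.
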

\begin{proof}
Suppose that there exists a weak solution $u\in H_{1,W}(\bb T^d)$ of \eqref{pc}. Since the function $v\equiv 1 \in H_{1,W}(\bb T^d)$, we have by definition of weak solution that
\begin{equation*}
 \int_{\bb T^d} f dx\;=\;\textit{B}[u,v]\; =\;0 .
\end{equation*} 

Now, let $f\in L^2(\bb T^d)$ with $\int_{\bb T^d} f dx=0$. Consider the bilinear form $\textit{B}$, defined in \eqref{def B} with $\lambda =0$, on the Hilbert space $H^\bot_{1,W}(\bb T^d)$. By Proposition \ref{lm-0}, $\textit{B}$ satisfies the hypothesis of the Lax-Milgram's Theorem. Further, $f$ defines the bounded linear functional in $H^\bot_{1,W}(\bb T^d)$ given by $(f,g)=\<f,g\>$ for every $g\in H^\bot_{1,W}(\bb T^d)$. Then, an application of Lax-Milgram's Theorem yields that there exists a unique $u\in H^\bot_{1,W}(\bb T^d)$ such that  
$$
\textit{B}[u,v] = \<f,v\>\; \text{for all}\;v\in H^\bot_{1,W}(\bb T^d).
$$
Moreover, by Proposition \ref{lm-0}, there is a $\beta>0$ such that
$$
\beta\|u\|_{1,W}^2\le B[u,u] =\<f,u\>\le\| f\|_{L^2(\bb T^d)} \| u\|_{L^2(\bb T^d)}\le 
\| f\|_{L^2(\bb T^d)} \| u\|_{1,W}.
$$

The existence of weak solutions and the bound $C$ in the statement of the Corollary follows from the previous expression.
\end{proof}

We now analyze the case in which $\lambda>0$.
\begin{proposition}[Energy estimates for $\lambda >0$]\label{lm-lambda}
Let $f\in L^2(\bb T^d)$. There exists a unique weak solution $u\in H_{1,W}(\bb T^d)$ for the equation 
\begin{equation}\label{prob b}
\lambda u - \nabla A\nabla_Wu\;=\;f,\qquad\lambda >0.
\end{equation}
This solution enjoys the following bounds
$$\|u\|_{1,W}\le C\|f\|_{L^2(\bb T^d)}$$
for some constant $C>0$ independent of $f$, and
$$\|u\| \leq \lambda^{-1} \|f\|_{L^2(\bb T^d)}.$$
\end{proposition}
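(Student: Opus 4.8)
The plan is to verify the two hypotheses of the Lax--Milgram Theorem for the bilinear form $B[\cdot,\cdot]$ defined in \eqref{def B} with $\lambda>0$, but now directly on the \emph{entire} space $H_{1,W}(\bb T^d)$ rather than on the subspace $H^\bot_{1,W}(\bb T^d)$. The crucial structural difference from the case $\lambda=0$ treated in Proposition~\ref{lm-0} is that the zeroth-order term $\lambda\<u,u\>$ now supplies coercivity by itself, so Poincar\'e's inequality is no longer needed. First I would establish boundedness: using the upper bound $a_i(x)\le\theta$ from \eqref{cota A} together with the Cauchy--Schwarz inequality on each term, and estimating $\lambda\<u,v\>\le\lambda\|u\|_{L^2}\|v\|_{L^2}\le\lambda\|u\|_{1,W}\|v\|_{1,W}$, one obtains $|B[u,v]|\le\alpha\|u\|_{1,W}\|v\|_{1,W}$ for a suitable $\alpha$ depending on $\lambda$ and $\theta$.

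For coercivity I would use the lower bound $a_i(x)\ge\theta^{-1}$ to write
\begin{equation*}
B[u,u]\;=\;\lambda\|u\|^2+\sum_{i=1}^d\int_{\bb T^d}a_i(x)(\partial_{W_i}u)^2\,d(x^i\x W_i)\;\ge\;\lambda\|u\|^2+\theta^{-1}\sum_{i=1}^d\int_{\bb T^d}(\partial_{W_i}u)^2\,d(x^i\x W_i).
\end{equation*}
Recalling that $\|u\|_{1,W}^2=\|u\|^2+\sum_{i=1}^d\int_{\bb T^d}(\partial_{W_i}u)^2\,d(x^i\x W_i)$, the choice $\beta=\min\{\lambda,\theta^{-1}\}$ yields $B[u,u]\ge\beta\|u\|_{1,W}^2$ for every $u\in H_{1,W}(\bb T^d)$. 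Since $f\in L^2(\bb T^d)$ and the embedding $H_{1,W}(\bb T^d)\subset L^2(\bb T^d)$ is continuous, the map $v\mapsto\<f,v\>$ is a bounded linear functional on $H_{1,W}(\bb T^d)$; the Lax--Milgram Theorem then delivers a unique $u\in H_{1,W}(\bb T^d)$ with $B[u,v]=\<f,v\>$ for all $v\in H_{1,W}(\bb T^d)$, which is precisely the weak solution of \eqref{prob b}.

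The two quantitative bounds follow by testing the weak formulation against $v=u$. For the first bound, coercivity and Cauchy--Schwarz give $\beta\|u\|_{1,W}^2\le B[u,u]=\<f,u\>\le\|f\|_{L^2(\bb T^d)}\|u\|_{L^2(\bb T^d)}\le\|f\|_{L^2(\bb T^d)}\|u\|_{1,W}$, whence $\|u\|_{1,W}\le\beta^{-1}\|f\|_{L^2(\bb T^d)}$, so $C=\beta^{-1}$ works. For the sharper bound on the $L^2$ norm, I would exploit that the gradient contribution is nonnegative: since $a_i(x)\ge\theta^{-1}>0$, dropping the sum in $B[u,u]$ leaves $\lambda\|u\|^2\le\<f,u\>\le\|f\|_{L^2(\bb T^d)}\|u\|$, and dividing by $\|u\|$ gives $\|u\|\le\lambda^{-1}\|f\|_{L^2(\bb T^d)}$. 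I do not anticipate a genuine obstacle here; the only point requiring care is recognizing that for $\lambda>0$ coercivity is automatic on all of $H_{1,W}(\bb T^d)$, and that the second bound requires keeping the $\lambda\|u\|^2$ term isolated rather than folding it into the full $\|u\|_{1,W}$ norm.
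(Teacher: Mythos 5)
Your proposal is correct and follows essentially the same route as the paper: verify the Lax--Milgram hypotheses on all of $H_{1,W}(\bb T^d)$ with $\beta=\min\{\lambda,\theta^{-1}\}$ and $\alpha=\max\{\lambda,\theta\}$, then test with $v=u$ for the energy bound. You additionally spell out the $L^2$ bound (dropping the nonnegative gradient term to isolate $\lambda\|u\|^2$), which the paper dismisses as ``analogous,'' but this is the same computation the authors intend.
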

\begin{proof}
Let $\beta = min\{\lambda,\theta^{-1}\}>0$ and 
$\alpha = max\{\lambda,\theta\}<\infty$, where $\theta$ is given in \eqref{cota A}. An elementary computation shows that
\begin{align*}
\textit{B}&[u,v]|\le \alpha\|u\|_{1,W}\;\|v\|_{1,W}\;\;\;\text{and}\;\;\; \textit{B}[u,u]\ge \beta\|u\|^2_{1,W}.
\end{align*}

By Lax-Milgram's Theorem, there exists a unique solution $u\in H_{1,W}(\bb T^d)$ of \eqref{prob b}.
Note that
\begin{equation*}
\beta\|u\|_{1,W}^2\le B[u,u] =\<f,u\>\le\| f\|_{L^2(\bb T^d)} \| u\|_{L^2(\bb T^d)}\le 
\| f\|_{L^2(\bb T^d)} \| u\|_{1,W},
\end{equation*}
and therefore $\|u\|_{1,W}\le C\|f\|_{L^2(\bb T^d)}$ for some constant $C>0$ independent of $f$. The computation to obtain the other bound is analogous.
\end{proof}
\begin{remark}\label{a-adj}
Let $\bb L_W^A:\bb D_W\to\bb L^2(\bb T^d)$ be given by $\bb L_W^A =\nabla A\nabla_W$. This operator has the properties stated in Theorem 2.1 in \cite{v}. We now outline the main steps to prove it. Following \cite{v}, we may prove an analogous of Lemma \ref{f17} for the operator $\bb L_W^A$. Using the bounds on the diagonal matrix $A$ and Proposition \ref{f3} (Rellich-Kondrachov), we conclude that the energetic extension of the space induced by this operator has compact embedding in $L^2(\bb T^d)$. The previous results together with \cite[Theorems 5.5.a and 5.5.c]{z} implies that $\bb L_W^A$ has a self-adjoint extension $\mc L_W^A$, which is dissipative and non-positive, and its eigenvectors form a complete orthonormal set in $L^2(\bb T^d)$. Furthermore, the set of eigenvalues of this extension is countable and its elements can be ordered resulting in a non-increasing sequence that tends to $-\infty$. 
\end{remark}
\begin{remark}
Let $\mc L_W^A$ be the self-adjoint extension given in Remark \ref{a-adj}, and $\mc D_W^A$ its domain. For $\lambda>0$ the operator $\lambda \bb I - \mc L_W^A:\mc D_W\to L^2(\bb T^d)$ is bijective. Therefore, the equation 
$$\lambda u - \nabla A\nabla_W u = f,$$
has strong solution in $\bb D_W$ if and only if $f\in (\lambda \bb I - \mc L_W^A)(\bb D_W)$, where $\bb I$ is the identity operator and $(\lambda \bb I - \mc L_W^A)(\bb D_W)$ stands for the range of $\bb D_W$ under the operator $\lambda \bb I - \mc L_W^A$. Moreover, this strong solution coincides with the weak solution obtained in Proposition \ref{lm-lambda}.
\end{remark}
\section{$W$-Generalized parabolic equations}
\label{sec4}
In this Section, we study a class of $W$-generalized PDEs that involves time: the parabolic equations. The parabolic equations are often used to describe in physical applications the time-evolution of the density of some quantity, say a chemical concentration within a region. The motivation of this generalization is to enlarge the possibility of such applications, for instance, these equations may be used to model a diffusion of particles within a region with membranes (see \cite{TC,v}).

We begin by introducing the class of $W$-generalized parabolic equations we are interested. Then, we define what is meant by weak solution of such equations, using the $W$-Sobolev spaces, and prove uniquenesses of these weak solutions. In Section \ref{aplicacao-limite}, we obtain existence of weak solutions of these equations.

Fix $T>0$ and let $(B,\|\cdot\|_B)$ be a Banach space. We denote by $L^2([0,T],B)$ the Banach space of measurable functions $U:[0,T]\to B$ for which
$$\|U\|_{L^2([0,T],B)}^2 := \int_0^T \|U_t\|_B^2 dt <\infty.$$

Let $A = A(t,x)$ be a diagonal matrix satisfying the ellipticity condition \eqref{cota A} for all $t\in [0,T]$, $\Phi:[l,r]\to\bb R$ be a continuously differentiable function such that $$B^{-1}<\Phi'(x)<B,$$ for all $x$, where $B>0$, $l,r\in\bb R$ are constants. 
We will consider the equation
\begin{equation}\label{parabolic}
\left\{\begin{array}{cc}
\partial_t u = \nabla A\nabla_W \Phi(u)& \hbox{~in~}(0,T]\times\bb T^d,\\
u= \gamma&\hbox{~in~}\{0\}\times\bb T^d.
\end{array}
\right.
\end{equation}
where $u:\bb [0,T]\times T^d \to \bb R$ is the unknown function and $\gamma:\bb T^d \to \bb R$ is given.\\

We say that a function $\rho = \rho(t,x)$ is a weak solution of the problem \eqref{parabolic} if: 
\begin{itemize}
\item For every $H\in\bb D_W$ the following integral identity holds
\begin{eqnarray*}
\int_{\bb T^d} \rho(t,x) H(x)dx - \int_{\bb T^d} \gamma(x) H(x)dx=
\int_0^t \, \int_{\bb T^d} \Phi (\rho(s,x))  \nabla A\nabla_W H(x)dx\,ds \\
\end{eqnarray*}
\item $\Phi(\rho(\cdot,\cdot))$ and $\rho(\cdot,\cdot)$ belong to $L^2([0,T],H_{1,W}(\bb T^d))$:
$$\int_0^T \|\Phi(\rho(s,x))\|_{L^2(\bb T^d)}^2 + \|\nabla_{W}\Phi(\rho(s,x))\|_{L^2_W(\bb T^d)}^2 ds <\infty,$$
and
$$\int_0^T \|\rho(s,x)\|_{L^2(\bb T^d)}^2 + \|\nabla_{W}\rho(s,x)\|_{L^2_W(\bb T^d)}^2 ds <\infty.$$
\end{itemize}

Consider the energy in $j$th direction of a function $u(s,x)$ as
\begin{align*}
\mc Q_j(u) = \sup_{H\in\bb D_W} \Big\{ 2\int_0^T\, \int_{\bb T^d}
 (\partial_{x_j}&\partial_{W_j}H) (s, x) \, u(s,x)dx\, ds\\
-  &\int_0^T ds\, \int_{\bb T^d} [\partial_{W_j}H (s, x)]^2
 d(x^j\x W_j)\Big\},
\end{align*}
and the total energy of a function $u(s,x)$ as
$$\mc Q(u) = \sum_{j=1}^d \mc Q_j (u).$$

The notion of energy is important in probability theory and is often used in large deviations of Markov processes. We also use this notion to prove the hydrodynamic limit in Section \ref{aplicacao-limite}.
The following lemma shows the connection between the functions of finite energy and functions in the Sobolev space.
\begin{lemma}\label{energia}
A function $u\in L^2([0,T],L^2(\bb T^d))$ has finite energy if and only if $u$ belongs to $L^2([0,T],H_{1,W}(\bb T^d))$. In the case the energy is finite, we have
$$\mc Q(u) = \int_0^T \|\nabla_W u\|_{L^2_W(\bb T^d)}^2 dt.$$
\end{lemma}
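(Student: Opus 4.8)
The plan is to prove both implications of the equivalence and, along the way, establish the stated formula $\mc Q(u) = \int_0^T \|\nabla_W u\|_{L^2_W(\bb T^d)}^2\, dt$. The natural strategy is to recognize $\mc Q_j(u)$ as a supremum of the form $\sup_H \{2\ell(H) - \|H\|^2_\ast\}$, which is precisely the variational characterization of a squared norm in a Hilbert space. Concretely, for fixed $j$, the map $H \mapsto \int_0^T\!\int_{\bb T^d}(\partial_{x_j}\partial_{W_j}H)\,u\,dx\,ds$ is a linear functional of $H$, while the subtracted term is the square of the $L^2([0,T],L^2_{x^j\x W_j})$-norm of $\partial_{W_j}H$. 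So the whole supremum should be read as a dual (Legendre-type) characterization of the norm of that linear functional on the space with norm $\big(\int_0^T\|\partial_{W_j}H\|_{x^j\x W_j}^2\,ds\big)^{1/2}$.

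First I would prove the easy direction: assume $u\in L^2([0,T],H_{1,W}(\bb T^d))$. Then for each $H\in\bb D_W$ and each $j$, Lemma \ref{mudavari} gives
\[
\int_0^T\!\int_{\bb T^d}(\partial_{x_j}\partial_{W_j}H)\,u\,dx\,ds
= -\int_0^T\!\int_{\bb T^d}(\partial_{W_j}H)(\partial_{W_j}u)\,d(x^j\x W_j)\,ds,
\]
where $\partial_{W_j}u$ denotes the generalized weak derivative, which exists in $L^2_{x^j\x W_j,0}$ for a.e.\ $t$ by hypothesis. Substituting this into the expression for $\mc Q_j(u)$ and completing the square in the integrand, the supremand becomes $\int_0^T\|\partial_{W_j}u\|_{x^j\x W_j}^2\,ds$ minus the squared norm of $\big(\partial_{W_j}u + \partial_{W_j}H\big)$, which is maximized by choosing $\partial_{W_j}H \to -\partial_{W_j}u$. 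Since $\{\partial_{W_j}H : H\in\bb D_W\}$ is dense in $L^2_{x^j\x W_j,0}$ (as noted in subsection 2.1, extended in the time variable), the supremum equals $\int_0^T\|\partial_{W_j}u\|_{x^j\x W_j}^2\,ds$, and summing over $j$ yields both finiteness of $\mc Q(u)$ and the claimed identity.

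For the converse, suppose $\mc Q(u)<\infty$. Fixing $j$, finiteness of $\mc Q_j(u)$ forces the linear functional $H\mapsto \int_0^T\!\int_{\bb T^d}(\partial_{x_j}\partial_{W_j}H)\,u\,dx\,ds$ to be bounded with respect to the seminorm $\big(\int_0^T\|\partial_{W_j}H\|^2_{x^j\x W_j}\,ds\big)^{1/2}$; indeed, if it were unbounded, rescaling $H$ by a large constant would make $2\ell(H)-\|\partial_{W_j}H\|^2$ arbitrarily large. By the density of $\{\partial_{W_j}H\}$ and the Riesz representation theorem on $L^2([0,T],L^2_{x^j\x W_j,0})$, there is a function $G_j\in L^2([0,T],L^2_{x^j\x W_j,0})$ representing this functional, which is exactly the statement that $u(t,\cdot)$ has $j$th generalized weak derivative $\partial_{W_j}u = -G_j$ for a.e.\ $t$, with $\int_0^T\|G_j\|^2_{x^j\x W_j}\,ds<\infty$. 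Doing this for every $j$ places $u$ in $L^2([0,T],H_{1,W}(\bb T^d))$ and recovers the identity as before.

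**The main obstacle** will be handling the time variable cleanly: the Sobolev-space definition and Lemma \ref{mudavari} are stated pointwise in $t$, whereas $\mc Q_j$ is an integral over $[0,T]$, so I must justify that boundedness of the time-integrated functional yields a measurable, square-integrable family of weak derivatives $\partial_{W_j}u(t,\cdot)$ rather than merely controlling some time-averaged object. This is resolved by working directly in the Hilbert space $L^2([0,T],L^2_{x^j\x W_j,0})$ and invoking the density of $\{\partial_{W_j}H : H\in\bb D_W\}$ there (tensorizing the spatial density with density in $L^2([0,T])$), so that the representing element $G_j$ is automatically a measurable $t$-family; a routine Fubini argument then identifies $G_j(t,\cdot)$ with the weak derivative of $u(t,\cdot)$ for almost every $t$.
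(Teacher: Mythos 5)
Your proposal is correct and follows essentially the same route as the paper: the direction ``$u\in L^2([0,T],H_{1,W})\Rightarrow$ finite energy'' via integration by parts and completing the square (the paper phrases this as the elementary inequality $-\<v,u+v\>\le\frac14\<u,u\>$ with equality at $v=-u/2$, together with density of $L^2([0,T],\bb D_W)$), and the converse via boundedness of the linear functional $H\mapsto\int_0^T\!\int(\partial_{x_j}\partial_{W_j}H)\,u$, Riesz representation on $L^2([0,T],L^2_{x^j\x W_j,0})$, and identification of the representative with $-\partial_{W_j}u$. Your explicit rescaling argument for why finiteness of the supremum forces boundedness of the functional, and your remark on recovering a measurable $t$-family of weak derivatives, fill in details the paper leaves implicit.
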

\begin{proof}
Consider functions $U\in L^2([0,T],L^2_{x^j\x W_j,0}(\bb T^d))$ as trajectories in $L^2_{x^j\x W_j,0}(\bb T^d)$, that is, consider a trajectory $\bs U:[0,T]\to L^2_{x^j\x W_j,0}(\bb T^d)$ and define $U(s,x)$ as $U(s,x):=[\bs U(s)](x)$. 

Let $u\in L^2([0,T],L^2(\bb T^d))$ and recall that the set $\{\partial_{W_j}H; H\in\bb D_W\}$ is dense in $L^2_{x^j\x W_j,0}(\bb T^d)$. Then the set $\{\partial_{W_j}H(s,x); H\in L^2([0,T],\bb D_W)\}$ is dense in $L^2([0,T],L^2_{x^j\x W_j,0}(\bb T^d))$. Suppose that $u$ has finite energy, and let $H\in L^2([0,T],\bb D_W)$, then 
$$\mc F_j(\partial_{W_j}H) = \int_0^T \, \int_{\bb T^d}
 (\partial_{x_j}\partial_{W_j}H) (s,x) \, u(s,x)dx\, ds$$
is a bounded linear functional in $L^2([0,T],L^2_{x^j\x W_j,0}(\bb T^d))$. Consequently, by Riesz's representation theorem, there exists a function $G_j\in L^2([0,T],L^2_{x^j\x W_j,0}(\bb T^d))$ such that
$$\mc F_j(\partial_{W_j}H) =  \int_0^T\int_{\bb T^d}
 (\partial_{W_j}H) (x) \, G_j(s,x) dx\,ds,$$
 for all $H\in L^2([0,T],\bb D_W)$.
 
From the uniqueness of the generalized weak derivative, we have that $G_j(s,x) = - \partial_{W_j} u(s,x)$.

Now, suppose $u$ belongs to $L^2([0,T],H_{1,W}(\bb T^d))$ and let $H\in L^2([0,T],\bb D_W)$. Then, we have
\begin{align*}
2\int_0^T\, \int_{\bb T^d}  (\partial_{x_j}\partial_{W_j}H) (s, x) \, u(s,x)dx\, ds 
-  \int_0^T ds\, \int_{\bb T^d} &\left(\partial_{W_j}H (s, x)\right)^2  d(x^j\x W_j)=\\
-2\int_0^T\int_{\bb T^d} \partial_{W_j} H(s,x)\partial_{W_j} u(s,x) d(x^j\x W_j) 
-\int_0^T\int_{\bb T^d} &\left(\partial_{W_j} H(s,x)\right)^2 d(x^j\x W_j)
\end{align*}
We can rewrite the right-hand side of the above expression as
\begin{equation}\label{energia2}
-2\<\partial_{W_j} H, 2\partial_{W_j}u + \partial_{W_j} H\>_{x^j\x W_j}.
\end{equation}

A simple calculation shows that, for a Hilbert space $\mc H$ with inner product $<\!\!\cdot,\cdot\!\!>$, the following inequality holds:
$$ -<\!\!v,u+v\!\!> \;\leq\; \frac{1}{4}<\!\!u,u\!\!>,$$
for all $u,v\in\mc H$, and we have equality only when $v = -1/2 u$. 

Therefore, by the previous estimates and \eqref{energia2}
\begin{align*}
2\int_0^T\, \int_{\bb T^d}  (\partial_{x_j}\partial_{W_j}H) (s, x) \, u(s,x)dx\, ds 
-  \int_0^T ds\, \int_{\bb T^d} &\left(\partial_{W_j}H (s, x)\right)^2  d(x^j\x W_j) \leq\\ 
\int_0^T \int_{\bb T^d} \left(\partial_{W_j} u(s,x)\right)^2 d(x^j\x W_j).
\end{align*}
By the definition of energy, we have for each $j=1,\ldots,d,$
$$\mc Q_j(u) \leq \int_0^T \int_{\bb T^d} \left(\partial_{W_j} u(s,x)\right)^2 d(x^j\x W_j).$$
Hence, the total energy is finite. Using the fact that $L^2([0,T],\bb D_W)$ is dense in $L^2([0,T],H_{1,W}(\bb T^d))$, we have that
\begin{eqnarray*}
\mc Q(u) &=& \sum_{j=1} \int_0^T \|\partial_{W_j} u\|_{x^j\x W_j}^2 dt\\
&=& \int_0^T \|\nabla_W u\|_{L^2_W(\bb T^d)}^2dt.
\end{eqnarray*}
\end{proof}

\subsection{Uniqueness of weak solutions of the parabolic equation}\label{unicidade}

Recall that we denote by $\<\cdot, \cdot\>$ the inner product of the
Hilbert space $L^2(\bb T^d)$. Fix $H,G\in L^2(\bb T^d)$, $\lambda >0$, and denote by $H_\lambda$ and $G_\lambda$ in $H_{1,W}(\bb T^d)$ the unique weak solutions of the elliptic equations
$$\lambda H_\lambda - \nabla A\nabla_W H_\lambda = H,$$
and
$$\lambda G_\lambda - \nabla A\nabla_W G_\lambda = G,$$
respectively. Then, we have the following symmetry property
$$\<G_\lambda,H\> = \< G,H_\lambda\>.$$
In fact, both terms in the previous equality are equal to
$$\lambda \int_{\bb T^d} H_\lambda G_\lambda + \sum_{j=1}^d a_{jj} \int_{\bb T^d} (\partial_{W_j} H_\lambda)(\partial_{W_j}G_\lambda) d(x^j\x W_j).$$

Let $\rho: \bb R_+ \times \bb T \to [l,r]$ be a weak solution of the parabolic equation
\eqref{parabolic}. Since $\rho$, $\Phi(\rho) \in L^2([0,T],H_{1,W}(\bb T^d))$, and the set $\bb D_W$ is dense in $H_{1,W}(\bb T^d)$, we have for every $H$ in $H_{1,W}(\bb T^d)$,
\begin{equation}\label{solfraca}
\< \rho_t, H\> \;-\; \< \gamma , H\> 
\;=-\; \sum_{j=1}^d a_{jj} \int_0^t \< \partial_{W_j} \Phi(\rho_s) ,\partial_{W_j} H \>_{x^j\x W_j}\, ds
\end{equation}
for all $t>0$.

Denote by $\rho_s^\lambda\in H_{1,W}(\bb T^d)$ the unique weak solution of the elliptic equation
\begin{equation}\label{unici-elip}
\lambda \rho_s^\lambda - \nabla A\nabla_W \rho_s^\lambda = \rho(s,\cdot).
\end{equation}

We claim that 
\begin{equation}
\label{f08}
\< \rho_t \,,\, \rho_t^\lambda\> \;-\; \< \rho_0 \,,\,\rho_0^\lambda\> \;=\; -2\sum_{j=1}^d a_{jj} \int_{0}^{t} \< \partial_{W_j}\Phi(\rho_{s}) \,,\, \partial_{W_j}\rho_{s}^\lambda\>_{x^j\x W_j}\, ds
\end{equation}
for all $t>0$.

To prove this claim, fix $t>0$ and consider a partition
$0=t_0< t_1 < \cdots < t_n = t$ of the interval $[0,t]$. Using the telescopic sum, we obtain
\begin{eqnarray*}
\< \rho_t \,,\, \rho_t^\lambda\> \;-\; \< \rho_0 \,,\, \rho_0^\lambda\> & =& \sum_{k=0}^{n-1} \< \rho_{t_{k+1}} \,,\, 
\rho_{t_{k+1}}^\lambda\> \;-\; \< \rho_{t_{k+1}} \,,\, 
\rho_{t_{k}}^\lambda\> \\
&+& \sum_{k=0}^{n-1} \< \rho_{t_{k+1}} \,,\, \rho_{t_{k}}^\lambda\>
\;-\; \< \rho_{t_{k}} \,,\, \rho_{t_{k}}^\lambda\>\;.
\end{eqnarray*}

We handle the first term, the second one being similar. From the symmetric property of 
the weak solutions, $\rho_{t_{k+1}}^\lambda$
belongs to $H_{1,W}(\bb T^d)$ and since $\rho$ is a weak solution of
\eqref{parabolic},
\begin{equation*}
\< \rho_{t_{k+1}} \,,\, \rho_{t_{k+1}}^\lambda\> \;-\; 
\< \rho_{t_{k+1}} \,,\, \rho_{t_{k}}^\lambda\> \;=\;
-\sum_{j=1}^d a_{jj} \int_{t_{k}}^{t_{k+1}} \< \partial_{W_j}\Phi(\rho_{s}) \,,\, \partial_{W_j}
\rho_{t_{k+1}}^\lambda\>\, ds\;.
\end{equation*}
Add and subtract $\< \partial_{W_j}\Phi(\rho_{s}) \,,\, \partial_{W_j}\rho_{s}^\lambda\>$ inside the integral on the right hand side of the above expression. The time integral of this term is exactly
the expression announced in \eqref{f08} and the remainder is given by
\begin{equation*}
\sum_{j=1}^d a_{jj}\int_{t_{k}}^{t_{k+1}} \Big\{\< \partial_{W_j}\Phi(\rho_{s}) \,,\, \partial_{W_j}
\rho_{s}^\lambda\>  \;-\;\< \partial_{W_j}\Phi(\rho_{s}) \,,\, \partial_{W_j}
\rho_{t_{k+1}}^\lambda\>  \Big\} \, ds\;.
\end{equation*}

Since $\rho_s^\lambda$ is the unique weak solution of the elliptic equation \eqref{unici-elip}, and the weak solution has the symmetric property, we may rewrite the previous difference as
\begin{equation*}
 \, \Big\{ \< \Phi(\rho_{s}) \,,\, \rho_{t_{k+1}}\> \;-\; 
\< \Phi(\rho_{s}) \,,\, \rho_{s}\>  \Big\} \; 
-\; \lambda \Big\{ \< \Phi(\rho_{s})^\lambda \,,\,  
\rho_{t_{k+1}}\> \;-\; \< \Phi(\rho_{s})^\lambda \,,\, \rho_{s}\>
\Big\}\;.
\end{equation*}
The time integral between $t_k$ and $t_{k+1}$ of the second term is
equal to
\begin{equation*}
-\lambda \int_{t_{k}}^{t_{k+1}} ds \int_{s}^{t_{k+1}}  
\< \partial_{W_j} \Phi(\rho_{s})^\lambda \,,\, \partial_{W_j}\Phi(\rho_{r})\> \; dr
\end{equation*}
because $\rho$ is a weak solution of \eqref{parabolic} and $\Phi(\rho_{s})$
belongs to $H_{1,W}(\bb T^d)$. It follows from the boundedness of the weak solution given in Proposition \ref{lm-lambda} and from the boundedness of the $L^2_{x^j\x W_j}(\bb T^d)$ norm of $\partial_{W_j}\Phi (\rho)$ obtained in expression \eqref{solfraca}, that this expression is of order $(t_{k+1} - t_{k})^2$.

To conclude the proof of claim \eqref{f08} it remains to be shown that
\begin{equation*}
\sum_{k=0}^{n-1} \int_{t_{k}}^{t_{k+1}} \Big\{ \< \Phi(\rho_{s}) \,
,\, \rho_{t_{k+1}}\> \;-\;  \< \Phi(\rho_{s}) \,,\, \rho_{s}\>  \Big\}
\, ds
\end{equation*}
vanishes as the mesh of the partition tends to $0$. Using, again, the fact that $\rho$ is a weak
solution, we may rewrite the sum as
\begin{equation*}
-\sum_{k=0}^{n-1} \int_{t_{k}}^{t_{k+1}} ds \int_s^{t_{k+1}}
\< \partial_{W_j}\Phi(\rho_{s}) \,,\, \partial_{W_j}\Phi(\rho_{r}) \> 
\; dr\;.
\end{equation*}

We have that this expression vanishes as the mesh of the
partition tends to $0$ from the boundedness of the $L^2_{x^j\x W_j}(\bb T^d)$ norm of $\partial_{W_j}\Phi (\rho)$. This proves \eqref{f08}.

Recall the definition of the constant $B$ given at the beginning of this Section.

\begin{lemma}
\label{s12}
Fix $\lambda>0$, two density profiles $\gamma^1$, $\gamma^2:\bb T\to [l,r]$ and
denote by $\rho^1$, $\rho^2$ weak solutions of \eqref{parabolic} with
initial value $\gamma^1$, $\gamma^2$, respectively. Then, 
\begin{equation*}
\Big \< \rho^1_t - \rho^2_t \,,\,
\rho^{1,\lambda}_t - \rho^{2,\lambda}_t \Big \> \;\le\; 
\Big \< \gamma^1 - \gamma^2 \,,\, \gamma^{1,\lambda} - \gamma^{2,\lambda} \Big \> \, e^{ B \lambda t/2}
\end{equation*}
for all $t>0$.  In particular, there exists at most one
weak solution of \eqref{parabolic}.
\end{lemma}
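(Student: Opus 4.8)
The plan is to control the nonnegative scalar quantity
$\phi(t) := \< \rho^1_t - \rho^2_t \,,\, \rho^{1,\lambda}_t - \rho^{2,\lambda}_t\>$ and to close a Gronwall inequality for it. Write $R_t := \rho^1_t - \rho^2_t$ and $R_t^\lambda := \rho^{1,\lambda}_t - \rho^{2,\lambda}_t$. Since the resolvent equation \eqref{unici-elip} is linear, $R_t^\lambda$ is the unique weak solution of $\lambda R_t^\lambda - \nabla A\nabla_W R_t^\lambda = R_t$. Testing this weak formulation (the bilinear form \eqref{def B}) against $v=R_t^\lambda$ and using the lower bound on $a_{jj}$ from \eqref{cota A} gives
$$\phi(t) \;=\; \lambda\|R_t^\lambda\|^2 + \sum_{j=1}^d a_{jj}\,\|\partial_{W_j}R_t^\lambda\|_{x^j\x W_j}^2 \;\ge\; \lambda\|R_t^\lambda\|^2 \;\ge\; 0,$$
so that $\phi(t)\ge 0$ and, crucially, $\|R_t^\lambda\|^2 \le \phi(t)/\lambda$.

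First I would establish the difference analogue of the identity \eqref{f08}. Re-running the same telescoping argument that produced \eqref{f08}, but now applying the weak-solution identity \eqref{solfraca} to $\rho^1$ and to $\rho^2$ separately and subtracting (and invoking the symmetry property of the resolvents for the other half of the telescoping sum), one obtains
$$\phi(t) - \phi(0) = -2\sum_{j=1}^d a_{jj}\int_0^t \big\<\partial_{W_j}\big(\Phi(\rho^1_s) - \Phi(\rho^2_s)\big),\, \partial_{W_j}R_s^\lambda\big\>_{x^j\x W_j}\, ds.$$
Set $\psi_s := \Phi(\rho^1_s) - \Phi(\rho^2_s)$, which lies in $H_{1,W}(\bb T^d)$ for a.e. $s$ because $\Phi(\rho^i)\in L^2([0,T],H_{1,W}(\bb T^d))$. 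Inserting $\psi_s$ as a test function in the weak formulation of the elliptic equation for $R_s^\lambda$ replaces the gradient pairing by $\< R_s,\psi_s\> - \lambda\< R_s^\lambda,\psi_s\>$, whence
$$\phi(t) - \phi(0) = -2\int_0^t I_s\, ds + 2\lambda\int_0^t \< R_s^\lambda,\psi_s\>\, ds, \qquad I_s := \< R_s,\psi_s\>.$$

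The next step exploits the monotonicity of $\Phi$. Writing $\psi_s = g_s R_s$ with $g_s = (\Phi(\rho^1_s)-\Phi(\rho^2_s))/(\rho^1_s-\rho^2_s)$, the hypothesis $B^{-1}<\Phi'<B$ gives $B^{-1}\le g_s\le B$ pointwise; hence $I_s = \int g_s R_s^2\,dx \ge 0$ and $\|\psi_s\|^2 = \int g_s^2 R_s^2\,dx \le B\int g_s R_s^2\,dx = B\, I_s$. Combining Cauchy--Schwarz with the two bounds $\|R_s^\lambda\|^2\le\phi(s)/\lambda$ and $\|\psi_s\|^2\le B I_s$, and then the elementary inequality $2\sqrt{ab}\le a+b$, yields
$$2\lambda\< R_s^\lambda,\psi_s\> \;\le\; 2\sqrt{\lambda B\,\phi(s)\,I_s} \;\le\; \frac{B\lambda}{2}\,\phi(s) + 2 I_s.$$
The decisive point is that the spurious term $2I_s$ is cancelled exactly by the dissipative term $-2I_s$, leaving $\phi(t)-\phi(0)\le \tfrac{B\lambda}{2}\int_0^t\phi(s)\,ds$; Gronwall's inequality then gives $\phi(t)\le\phi(0)\,e^{B\lambda t/2}$, which is precisely the asserted bound. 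For uniqueness, take $\gamma^1=\gamma^2$, so $\phi(0)=0$ and hence $\phi(t)\equiv 0$; since $\phi(t)\ge\lambda\|R_t^\lambda\|^2$ this forces $R_t^\lambda=0$, and then $R_t = \lambda R_t^\lambda - \nabla A\nabla_W R_t^\lambda = 0$, i.e. $\rho^1_t=\rho^2_t$.

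I expect the main obstacle to be the careful re-derivation of the difference version of \eqref{f08}: one must track all four bilinear terms arising in the telescoping sum and apply \eqref{solfraca} to each solution, exactly as in the single-solution computation but with the nonlinearity appearing as the difference $\Phi(\rho^1_s)-\Phi(\rho^2_s)$. The other delicate point is the precise balancing of constants in the final Cauchy--Schwarz/AM--GM step, where the dissipation $-2I_s$ must absorb the off-diagonal contribution exactly, leaving only the multiple of $\phi(s)$ needed for Gronwall.
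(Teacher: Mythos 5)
Your proposal is correct and follows essentially the same route as the paper: the difference version of identity \eqref{f08}, conversion of the gradient pairing via the weak formulation of the elliptic (resolvent) equation to arrive at the paper's \eqref{unic1}, exact absorption of the cross term into the dissipation term $-2I_s$, and Gronwall. The only variation is in the absorption step, where you use the coercivity bound $\lambda\|R_s^\lambda\|^2\le\phi(s)$ together with the pointwise estimate $\|\psi_s\|^2\le B\,I_s$, whereas the paper introduces the auxiliary inner product $\<u,v\>_\lambda=\<u,v^\lambda\>$ and the resolvent contraction $\|u^\lambda\|\le\lambda^{-1}\|u\|$; both give the same constant $B\lambda/2$.
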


\begin{proof}
We begin by showing that if there exists $\lambda>0$ such that
$$\<H,H^\lambda\> = 0,$$
then $H=0$. In fact, we would have the following 
\begin{eqnarray*}
\int_{\bb T^d}\lambda (H^\lambda)^2 dx + \sum_{j=1}^d a_{jj} \int_{\bb T^d} \left(\partial_{W_j}H^\lambda\right)^2 d(x^j\x W_j) = \int_{\bb T^d} H H^\lambda dx= 0,
\end{eqnarray*}
which implies that $\|H^\lambda\|_{H_{1,W}(\bb T^d)} = 0$, and hence $H_\lambda = 0$, which yields $H=0$.

Fix two density profiles $\gamma^1$, $\gamma^2:\bb T^d\to [l,r]$.  Let
$\rho^1$, $\rho^2$ be two weak solutions with initial values
$\gamma^1$, $\gamma^2$, respectively. By \eqref{f08}, for any
$\lambda>0$,
\begin{equation}\label{unic1}
\begin{array}{c}
\!\!\!\!\!\!\!\!\!\!\!\!\!\!\! 
\Big \< \rho^1_t - \rho^2_t \,,\,
\rho^{1,\lambda}_t - \rho^{2,\lambda}_t \Big \> - 
\Big \< \gamma^1 - \gamma^2 \,,\, \gamma^{1,\lambda} - \gamma^{2,\lambda} \Big \>\;= \\
\!\!\!\!\!\!\!\!\!\!\!\!\!\!\!
- 2 \int_{0}^{t} \< \Phi(\rho^1_{s}) - \Phi(\rho^2_{s}) \,,\, 
\rho^1_{s} -  \rho^2_{s} \>\, ds 
\;+\; 2 \lambda \int_{0}^{t} \Big \< \Phi(\rho^1_{s}) - \Phi(\rho^2_{s}) 
\,,\,   \rho^{1,\lambda}_{s} -  \rho^{2,\lambda}_{s} \Big \>\, ds\;.
\end{array}
\end{equation}

Define the inner product in $H_{1,W}(\bb T^d)$
$$\<u,v\>_{\lambda} = \<u,v^\lambda\>.$$
This is, in fact, an inner product, since $\<u,v\>_\lambda = \<v,u\>_\lambda$ by the symmetric property, and if $u\neq 0$, then $\<u,u\>_\lambda >0$:
$$\int_{\bb T^d} u u_\lambda dx = \lambda \int_{\bb T^d} u_\lambda^2 dx + \sum_{j=1}^d a_{jj}\int_{\bb T^d} \left(\partial_{W_j} u^\lambda\right)^2 d(x^j\x W_j).$$
The linearity of this inner product can be easily verified.

Then, we have
$$2 \lambda \int_{0}^{t} \Big \< \Phi(\rho^1_{s}) - \Phi(\rho^2_{s}) 
\,,\,   \rho^{1,\lambda}_{s} -  \rho^{2,\lambda}_{s} \Big \>\, ds = 2\lambda\int_0^t \Big \< \Phi(\rho^1_{s}) - \Phi(\rho^2_{s}) 
\,,\,   \rho^{1}_{s} -  \rho^{2}_{s} \Big \>_\lambda ds.$$

By using the Cauchy-Schwartz inequality twice, the term on the right hand side of the above formula is
bounded above by
\begin{equation*}
\frac 1A  \int_{0}^{t} \Big \< \Phi(\rho^1_{s}) - \Phi(\rho^2_{s}) 
\,,\,  \Phi(\rho^{1}_{s})^\lambda - \Phi(\rho^{2}_{s})^\lambda \Big \>\, ds
\;+\; A\lambda^2  \int_{0}^{t} \Big \< \rho^1_{s} -  \rho^2_{s}
\,,\, \rho^{1,\lambda}_{s} -  \rho^{2,\lambda}_{s}\Big \>\, ds
\end{equation*}
for every $A>0$. From Proposition \ref{lm-lambda}, we have that $\|u^\lambda\|\leq \lambda^{-1}\|u\|$, and since $\Phi'$ is bounded by $B$, the first term of
the previous expression is less than or equal to
\begin{equation*}
\frac B{A \lambda} \int_{0}^{t} \Big \< \rho^1_{s} - \rho^2_{s} 
\,,\,    \Phi(\rho^1_{s}) - \Phi(\rho^2_{s}) \Big \>\, ds \;.
\end{equation*}
Choosing $A = B/2\lambda$, this expression cancels with the first term
on the right hand side of \eqref{unic1}. In particular, the left
hand side of this formula is bounded by
\begin{equation*}
\frac {B \lambda}2 \int_{0}^{t} \Big \< \rho^1_{s} -  \rho^2_{s}
\,,\,   \rho^{1,\lambda}_{s} -  \rho^{2,\lambda}_{s} \Big \>\, ds\;.
\end{equation*}
To conclude, recall Gronwall's inequality.
\end{proof}

\begin{remark}
Let $\mc L_W^A:\mc D_W\to L^2(\bb T^d)$ be the self-adjoint extension given in Remark \ref{a-adj}. For $\lambda>0$, define the resolvent operator $G_\lambda^A = (\lambda\bb I - \mc L_W^A)^{-1}$. Following \cite{TC,v}, another possible definition of weak solution of equation \eqref{parabolic} is given as follows: a bounded function $\rho : [0,T] \times \bb T^d \to [l,r]$
is said to be a weak solution of the parabolic differential equation \eqref{parabolic} if
\begin{equation}
\label{wsol}
\< \rho_t, G_\lambda^A h\> \;-\; \< \gamma , G_\lambda^A h\>
\;=\; \int_0^t \< \Phi(\rho_s) , \mc L_W^A G_\lambda^A h \>\, ds\;
\end{equation}
for every continuous function $h:\bb T^d\to \bb R$, $t\in[0,T]$, and all $\lambda >0$. We claim that this definition of weak solution coincides with our definition introduced at the beginning of Section \ref{sec4}. Indeed, for continuous $h:\bb T^d\to\bb R$, $G_\lambda^A h$ belongs to $\mc D_W$. Since $\bb D_W$ is dense in $\mc D_W$ with respect to the $H_{1,W}(\bb T^d)$-norm, it follows that our definition implies the current definition. Conversely, since the set of continuous functions is dense in $L^2(\bb T^d)$, the identity \eqref{wsol} is valid for all $h\in L^2(\bb T^d)$. Therefore, for each $H \in \mc D_W$ we have
\begin{equation*}
\< \rho_t, H\> \;-\; \< \gamma , H\>
\;=\; \int_0^t \< \Phi(\rho_s) , \mc L_W^A H \>\, ds.
\end{equation*}

In particular, the above identity holds for every $H\in \bb D_W$, and therefore the integral identity in our definition of weak solutions holds.

It remains to be checked that the weak solution of the current definition belongs to $L^2([0,T],H_{1,W}(\bb T^d))$. This follows from the fact that there exists at most one weak solution satisfying \eqref{wsol}, that this unique solution has finite energy, and from Lemma \ref{energia}. A proof of the fact that there exists at most one solution satisfying \eqref{wsol}, and that this unique solution has finite energy, can be found in \cite{TC,v}.

Finally, the integral identity of our definition of weak solution has an advantage regarding the integral identity \eqref{wsol}, due to the fact that we do not need the resolvent operator $G_\lambda^A$ for any $\lambda$. Moreover, we have an explicit characterization of our test functions.
\end{remark}

\section{$W$-Generalized Sobolev spaces: Discrete version}
\label{sec5}
We will now establish some of the results obtained in the above sections to the discrete version of the $W$-Sobolev space. Our motivation to obtain these results is that they will be useful when studying homogenization in Section \ref{sec6}. We begin by introducing some definitions and notations.

Fix $W$ as in \eqref{w} and functions $f,g$ defined on $N^{-1}\bb T^d_N$. Consider the following difference operators: $\partial^N_{x_j}$, which is the standard difference operator, 
\begin{equation*}
\partial^N_{x_j}f\left(\frac xN\right)\; =\; N\left[f\left(\frac{x+e_j}{N}\right)-f\left(\frac xN\right)\right]\;,
\end{equation*}
and $\partial^N_{W_j}$, which is the $W_j$-difference operator:
\begin{equation*}
\partial^N_{W_j}f\left(\frac xN\right)\;=\; \frac{f\left(\frac{x+e_j}{N}\right) -f\left(\frac xN\right)}{W\left(\frac{x+e_j}{N}\right) -W\left(\frac xN\right)},
\end{equation*}
for $x\in\bb T^d_N$.
We introduce the following scalar product 
\begin{align*}
 \<f,g\>_N& \;:=\; \frac1{N^d}\sum_{x\in \bb T^d_N} f(x)g(x),\\
\<f,g\>_{W_j,N} :=& \frac1{N^{d-1}} \sum_{x\in\bb T^d_N}f(x)g(x)\big(W((x+e_j)/N)-W(x/N)\big),\\
\<f,g\>_{1,W,N}\;:=\; &\<f,g\>_N\; +\; \sum_{j=1}^d \<\partial_{W_j}^Nf,\partial_{W_j}^Ng\>_{W_j,N},
\end{align*}
and its induced norms 
$$\| f \|^2_{L^2(\bb T^d_N)} = \<f,f\>_N,\quad \| f \|^2_{L_{W_j}^2(\bb T^d_N)} = \<f,f\>_{W_j,N}\hbox{~and~}\| f \|^2_{H_{1,W}(\bb T^d_N)} = \<f,f\>_{1,W,N}.$$

These norms are natural discretizations of the norms introduced in the previous sections. Note that the properties of the Lebesgue's measure used in the proof of Corollary \ref{poinc}, also holds for the normalized counting measure. Therefore, we may use the same arguments of this Corollary to prove its discrete version.

\begin{lemma}[Discrete Poincaré Inequality]
There exists a finite constant $C$ such that
$$\left\|f-\frac{1}{N^d}\sum_{x\in \bb T^d}f \right\|_{L^2(\bb T^d_N)} \;\le\;C\|\nabla_W^Nf\|_{L^2_W({\bb T}_N^d)},$$ 
where
$$\|\nabla_Wf\|_{L^2_W({\bb T}_N^d)}^2 = \sum_{j=1}^d \|\partial_{W_j}^N f\|_{L^2_{W_j}(\bb T_N^d)}^2,$$
for all $f:N^{-1}\bb T_N^d\to\bb R$.
\end{lemma}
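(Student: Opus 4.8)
The plan is to mirror the proof of the continuous Poincar\'e inequality (Corollary \ref{poinc}) in the discrete setting, exploiting the remark made just before the statement that the counting measure $N^{-d}\sum_{x\in\bb T^d_N}$ enjoys the same structural properties as Lebesgue measure used there. First I would write $f(x)-N^{-d}\sum_{y} f(y)$ as an average over $y$ of the difference $f(x)-f(y)$, and then expand $f(x)-f(y)$ as a telescopic sum along the coordinate directions exactly as in the continuous case. The single but essential modification is that the integral $\int_{y_i}^{x_i}\partial_{W_i}f\,dW_i(t)$ is replaced by a discrete sum of the $W_j$-difference operators $\partial^N_{W_j}f$ against the increments $W((z+e_j)/N)-W(z/N)$; this is precisely the discrete analogue of the fundamental theorem of calculus with respect to $dW$, and it holds by construction of $\partial^N_{W_j}$, since the weight in the denominator of $\partial^N_{W_j}$ cancels against the increment appearing in $\<\cdot,\cdot\>_{W_j,N}$.

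After the telescoping step, I would estimate the square of the sum. The argument proceeds in the same four moves as Corollary \ref{poinc}: bound $f(x)-f(y)$ by the absolute sum of the telescopic terms, bound the squared sum of the $d$ coordinate contributions by $C$ times the sum of squares (the elementary inequality $(\sum_i x_i)^2\le C\sum_i x_i^2$), and apply Jensen's (Cauchy--Schwarz) inequality to pass the square inside the averaging over $y$. The key arithmetic identity making this work is that the discrete increment $W((z+e_j)/N)-W(z/N)$ plays simultaneously the role of the integrating measure $dW_j$ in the telescoping and of the weight defining $\|\partial^N_{W_j}f\|^2_{L^2_{W_j}(\bb T^d_N)}$, so that summing $|\partial^N_{W_j}f|^2$ against these increments reproduces exactly $\|\nabla^N_W f\|^2_{L^2_W(\bb T^d_N)}$ up to the normalizing powers of $N$.

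The main point requiring care is bookkeeping of the normalizing factors $N^{-d}$ versus $N^{-(d-1)}$ in the different inner products: the scalar product $\<\cdot,\cdot\>_{W_j,N}$ carries $N^{-(d-1)}$ while the extra free summation variable in direction $j$ (produced by the telescoping/averaging) supplies the missing factor $N^{-1}$, so the powers of $N$ combine correctly to yield $\|\partial^N_{W_j}f\|^2_{L^2_{W_j}(\bb T^d_N)}$. This is the discrete counterpart of the Fubini rearrangement at the end of Corollary \ref{poinc}, and it is where a naive transcription could lose a factor of $N$. Apart from tracking these normalizations, there is no new analytic difficulty, and in particular no density or approximation argument is needed here, since all functions on the finite set $N^{-1}\bb T^d_N$ are automatically in the discrete space.

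I expect the only genuine obstacle to be verifying the discrete telescoping identity with the correct measure--weight matching described above; once that identity is in place, the remaining inequalities are literal repetitions of the continuous proof with sums replacing integrals, and the constant $C$ can be taken uniform in $N$ because it arises solely from the combinatorial inequality $(\sum_i x_i)^2\le C\sum_i x_i^2$ with $C$ depending only on the dimension $d$.
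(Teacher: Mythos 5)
Your proposal is correct and follows essentially the same route as the paper: the paper gives no separate proof of the discrete lemma, remarking only that the properties of Lebesgue measure used in Corollary \ref{poinc} (telescoping along coordinates, the fundamental theorem of calculus with respect to $dW_j$, Jensen's inequality, and $(\sum_i x_i)^2\le C\sum_i x_i^2$) hold equally for the normalized counting measure, so the continuous argument transfers verbatim. Your additional care with the discrete telescoping identity and the $N^{-d}$ versus $N^{-(d-1)}$ normalizations is exactly the bookkeeping the paper leaves implicit.
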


Let $A$ be a diagonal matrix satisfying \eqref{cota A}. We are interested in studying the problem
\begin{equation}\label{prob TN}
T_{\lambda}^Nu = f,
\end{equation}
where $u:N^{-1}\bb T^d_N \to \bb R$ is the unknown function, $f:N^{-1}\bb T^d_N \to \bb R$ is given, and $T_{\lambda}^N$ denotes the discrete generalized elliptic operator
\begin{equation}\label{def TN}
  T_{\lambda}^Nu\; :=\;\lambda u - \nabla^N A\nabla_W^Nu,
\end{equation}
with
$$\nabla^N A\nabla_W^Nu\;:=\;\sum_{i=1}^d\partial_{x_i}^N\Big(a_i(x/N)\partial_{W_i}^Nu\Big).$$

The bilinear form $B^N[\cdot,\cdot]$ associated with the elliptic operator $T_{\lambda}^N$ is given by
\begin{equation}\label{def BN}
\begin{array}{c}
B^N[u,v] \;=\; \lambda\<u,v\>_N\;+\\*[5pt]
+\frac{1}{N^{d-1}}\sum_{i=1}^d\sum_{x\in\bb T_N^d}a_i(x/N)(\partial_{W_i}^N u)(\partial_{W_i}^Nv)[W_i((x_i+1)/N)-W_i(x_i/N)],
\end{array}
\end{equation}
where $u,v:N^{-1}\bb T_N^d\to\bb R$.

A function $u:N^{-1}\bb T_N^d\to\bb R$ is said to be a weak solution of the equation
$T_\lambda^N u = f\;$ if 
\begin{equation*}
B^N[u,v]\;=\;\<f,v\>_N \;\;\text{for all}\;\;v:N^{-1}\bb T_N^d\to\bb R.
\end{equation*}

We say that a function $f:N^{-1}\bb T^d_N \to \bb R$ belongs to the discrete space of functions orthogonal to the constant functions $H^{\bot}_N(\bb T^d_N)$ if 
$$\frac1{N^d}\sum_{x\in \bb T^d_N} f(x/N) = 0.$$

The following results are analogous to the weak solutions of generalized elliptic equations for this discrete version. We remark that the proofs of these lemmas are identical to the ones in the continuous case. Furthermore, the weak solution for the case $\lambda=0$ is unique in $H^{\bot}_N(\bb T^d_N)$.

\begin{lemma}
The equation
$$\nabla^N A\nabla_W^N u = f,$$
has weak solution $u:N^{-1}\bb T_N^d\to\bb R$ if and only if
$$\frac{1}{N^d}\sum_{x\in{\bb T}_N^d} f(x) = 0.$$
In this case we have uniqueness of the solution disregarding addition by constants. Moreover, if $u\in H^{\bot}_N(\bb T^d_N)$ we have the bound
$$\| u \|_{{H}_{1,W}(\bb T_N^d)} \leq C \|f\|_{L^2(\bb T_N^d)},\hbox{~and~}\; \| u \|_{L^2(\bb T_N^d)} \leq \lambda^{-1} \|f\|_{L^2(\bb T_N^d)},$$
where $C>0$ does not depend on $f$ nor $N$.
\end{lemma}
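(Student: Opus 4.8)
The plan is to mirror the continuous arguments of Proposition~\ref{lm-0} and its Corollary, replacing the continuum Hilbert space $H_{1,W}(\bb T^d)$ by the finite-dimensional space of functions $u:N^{-1}\bb T^d_N\to\bb R$ equipped with the inner product $\<\cdot,\cdot\>_{1,W,N}$, and the continuum Poincar\'e inequality by the discrete counterpart established just above. Since this space is finite-dimensional, all analytic subtleties disappear, and the only genuine issue is to arrange every estimate so that the constants do not depend on $N$.

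First I would verify the necessity of the compatibility condition. Testing the weak formulation against the constant function $v\equiv 1$ and observing that $\partial^N_{W_i}1\equiv 0$, the bilinear form $B^N[u,1]$ vanishes, so that $\<f,1\>_N = N^{-d}\sum_{x\in\bb T^d_N} f(x/N) = 0$ is forced; this gives the ``only if'' direction.

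For the converse I would restrict $B^N[\cdot,\cdot]$ (with $\lambda=0$) to the subspace $H^{\bot}_N(\bb T^d_N)$ of mean-zero functions and check the two Lax--Milgram hypotheses exactly as in Proposition~\ref{lm-0}. Boundedness $|B^N[u,v]|\le\alpha\|u\|_{H_{1,W}(\bb T^d_N)}\|v\|_{H_{1,W}(\bb T^d_N)}$ follows from the upper bound $a_i\le\theta$ in \eqref{cota A} together with Cauchy--Schwarz, while coercivity $B^N[u,u]\ge\beta\|u\|_{H_{1,W}(\bb T^d_N)}^2$ follows from the lower bound $a_i\ge\theta^{-1}$ combined with the discrete Poincar\'e inequality; here the constant $\beta$ is built only from $\theta^{-1}$ and the Poincar\'e constant, both independent of $N$. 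Since $f\in H^{\bot}_N(\bb T^d_N)$ induces the bounded linear functional $v\mapsto\<f,v\>_N$, Lax--Milgram yields a unique $u\in H^{\bot}_N(\bb T^d_N)$ solving the equation, and adding an arbitrary constant generates all remaining solutions, giving uniqueness up to constants. The stability bound then follows from coercivity:
$$\beta\|u\|_{H_{1,W}(\bb T^d_N)}^2\le B^N[u,u]=\<f,u\>_N\le\|f\|_{L^2(\bb T^d_N)}\|u\|_{L^2(\bb T^d_N)}\le\|f\|_{L^2(\bb T^d_N)}\|u\|_{H_{1,W}(\bb T^d_N)},$$
whence $\|u\|_{H_{1,W}(\bb T^d_N)}\le C\|f\|_{L^2(\bb T^d_N)}$ with $C=\beta^{-1}$ independent of $N$. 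The remaining $L^2$ bound is the exact analogue of the second estimate in Proposition~\ref{lm-lambda}: in the $\lambda>0$ version one simply discards the nonnegative gradient terms in $B^N[u,u]$ to obtain $\lambda\|u\|_{L^2(\bb T^d_N)}^2\le\<f,u\>_N\le\|f\|_{L^2(\bb T^d_N)}\|u\|_{L^2(\bb T^d_N)}$, hence $\|u\|_{L^2(\bb T^d_N)}\le\lambda^{-1}\|f\|_{L^2(\bb T^d_N)}$.

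The main obstacle, and really the only delicate point, is the uniformity of the constants in $N$. Because the underlying space is finite-dimensional, a naive appeal to the equivalence of norms would produce $N$-dependent constants; the whole purpose of invoking the discrete Poincar\'e inequality is to secure an $N$-independent coercivity constant $\beta$, so that the stability bounds survive the limit $N\to\infty$ required by the homogenization arguments of Section~\ref{sec6}.
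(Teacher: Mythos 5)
Your proof is correct and takes essentially the same route as the paper, which simply remarks that the discrete proofs are identical to the continuous ones: necessity by testing against $v\equiv 1$, Lax--Milgram on the mean-zero subspace $H^{\bot}_N(\bb T^d_N)$ with coercivity supplied by the discrete Poincar\'e inequality (giving $N$-independent constants), and the stability bounds from coercivity. Your observation that the $\lambda^{-1}$ estimate really pertains to the $\lambda>0$ equation is also consistent with how the paper organizes these bounds.
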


\begin{lemma}\label{lmdiscreto}
Let $\lambda > 0$. There exists a unique weak solution $u:N^{-1}\bb T_N^d\to\bb R$ of the equation
\begin{equation}\label{eqdisc}
\lambda u - \nabla^N A \nabla_W^N u = f.
\end{equation}
Moreover,
$$\| u \|_{{H}_{1,W}(\bb T_N^d)} \leq C \|f\|_{L^2(\bb T_N^d)},\hbox{~and~}\; \| u \|_{L^2(\bb T_N^d)} \leq \lambda^{-1} \|f\|_{L^2(\bb T_N^d)},$$
where $C>0$ does not depend neither on $f$ nor $N$.
\end{lemma}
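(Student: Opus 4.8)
The plan is to repeat verbatim the argument of Proposition \ref{lm-lambda}, now carried out on the finite-dimensional Hilbert space $\mc H_N$ of all functions $u:N^{-1}\bb T_N^d\to\bb R$ equipped with the inner product $\<\cdot,\cdot\>_{1,W,N}$. Since each $W_k$ is strictly increasing, the weights $W_i((x_i+1)/N)-W_i(x_i/N)$ appearing both in \eqref{def BN} and in the norm $\|\cdot\|_{H_{1,W}(\bb T_N^d)}$ are strictly positive, so $\<\cdot,\cdot\>_{1,W,N}$ is genuinely an inner product and $\mc H_N$ is a Hilbert space. By definition, a weak solution of \eqref{eqdisc} is an element $u\in\mc H_N$ with $B^N[u,v]=\<f,v\>_N$ for every $v\in\mc H_N$, so existence and uniqueness will follow once $B^N$ is shown to satisfy the hypotheses of the Lax-Milgram Theorem and $v\mapsto\<f,v\>_N$ is recognized as a bounded linear functional (automatic in finite dimensions).

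First I would verify the two Lax-Milgram bounds directly from \eqref{def BN} and the ellipticity condition \eqref{cota A}. Applying the Cauchy-Schwarz inequality to $\<\cdot,\cdot\>_N$ and to each $\<\cdot,\cdot\>_{W_i,N}$, together with $a_i\le\theta$, yields continuity,
$$|B^N[u,v]|\;\le\;\alpha\,\|u\|_{H_{1,W}(\bb T_N^d)}\,\|v\|_{H_{1,W}(\bb T_N^d)},\qquad \alpha=\max\{\lambda,\theta\}.$$
Coercivity is even more immediate: since $a_i\ge\theta^{-1}$, replacing $a_i$ by its lower bound in \eqref{def BN} with $u=v$ and comparing the resulting expression with the definition of $\|\cdot\|_{H_{1,W}(\bb T_N^d)}$ gives
$$B^N[u,u]\;\ge\;\beta\,\|u\|_{H_{1,W}(\bb T_N^d)}^2,\qquad \beta=\min\{\lambda,\theta^{-1}\}.$$
With these two estimates the Lax-Milgram Theorem produces the unique $u\in\mc H_N$ solving \eqref{eqdisc}.

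To obtain the quantitative bounds, I would set $v=u$ and use $B^N[u,u]=\<f,u\>_N$. Coercivity together with $\<f,u\>_N\le\|f\|_{L^2(\bb T_N^d)}\|u\|_{L^2(\bb T_N^d)}\le\|f\|_{L^2(\bb T_N^d)}\|u\|_{H_{1,W}(\bb T_N^d)}$ gives
$$\|u\|_{H_{1,W}(\bb T_N^d)}\;\le\;\beta^{-1}\|f\|_{L^2(\bb T_N^d)},$$
so one may take $C=\beta^{-1}=\max\{\lambda^{-1},\theta\}$. For the remaining estimate, discard the non-negative gradient terms in $B^N[u,u]$ to retain only $\lambda\|u\|_{L^2(\bb T_N^d)}^2\le\<f,u\>_N\le\|f\|_{L^2(\bb T_N^d)}\|u\|_{L^2(\bb T_N^d)}$, which yields $\|u\|_{L^2(\bb T_N^d)}\le\lambda^{-1}\|f\|_{L^2(\bb T_N^d)}$.

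Since $\mc H_N$ is finite dimensional, existence and uniqueness are in fact routine (coercivity alone makes the operator $\lambda\bb I-\nabla^N A\nabla_W^N$ injective, hence bijective), so the genuine content of the statement is that $C$ can be chosen independently of $N$. The only thing that requires care is therefore to notice that neither $\alpha$ nor $\beta$ depends on $N$: the discretization weights $W_i((x_i+1)/N)-W_i(x_i/N)$ enter $B^N$ and $\|\cdot\|_{H_{1,W}(\bb T_N^d)}$ in exactly the same way and cancel in the comparison, and the only information used about $A$ is the $N$-free two-sided bound \eqref{cota A}. Consequently $C=\max\{\lambda^{-1},\theta\}$ is uniform in $N$, as claimed.
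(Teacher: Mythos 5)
Your proof is correct and follows essentially the same route as the paper: the authors simply remark that the proofs of the discrete lemmas are identical to the continuous case, i.e.\ to Proposition \ref{lm-lambda}, which is exactly the Lax--Milgram argument with $\beta=\min\{\lambda,\theta^{-1}\}$, $\alpha=\max\{\lambda,\theta\}$ and the same two energy bounds you derive. Your additional observation that the constants are manifestly independent of $N$ (since only \eqref{cota A} enters and the discretization weights appear identically in $B^N$ and in $\|\cdot\|_{H_{1,W}(\bb T_N^d)}$) is exactly the point that makes the verbatim transcription legitimate.
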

\begin{remark}\label{diracdiscreta}
Note that in the set of functions in $\bb T_N^d$ we have a ``Dirac measure'' concentrated in a point $x$ as a function: the function that takes value $N^d$ in $x$ and zero elsewhere. Therefore, we may integrate these weak solutions with respect to this function to obtain that every weak solution is, in fact, a strong solution.
\end{remark}
\subsection{Connections between the discrete and continuous Sobolev spaces}\label{correspondencia}
Given a function $f\in H_{1,W}(\bb T^d)$, we can define its restriction $f_N$ to the lattice $N^{-1}\bb T_N^d$ as
$$f_N(x) = f(x) \;\;\text{if}\;\; x\in N^{-1}\bb T_N^d.$$

However, given a function $f:N^{-1}\bb T_N^d\to\bb R$ it is not straightforward how to define an extension belonging to $H_{1,W}(\bb T^d)$. To do so, we need the definition of $W$-interpolation, which we give below.

Let $f_N:N^{-1}\bb T_N\to \bb R$ and $W: \bb R \to \bb R$, a \emph{strictly increasing} right continuous function with left limits (c\`adl\`ag), and periodic. 
The $W$-interpolation $f_N^*$ of $f_N$ is given by:
\begin{eqnarray*}
f^*_N(x+t)&:=&\frac{W((x+1)/N)-W((x+t)/N)}{W((x+1)/N)-W(x/N)}f(x)+\\
&+&\frac{W((x+t)/N)-W(x/N)}{W((x+1)/N)-W(x/N)}f(x+1)
\end{eqnarray*}
for $0\le  t < 1$. 
Note that
\begin{equation*}
\frac{\partial f^*_N}{\partial W}(x + t) = \frac{f(x+1)-f(x)}{W((x+1)/N)-W(x/N)}\;=\;\partial^N_Wf(x).
\end{equation*}

Using the standard construction of $d$-dimensional linear interpolation, it is possible to define the $W$-interpolation of a function $f_N:\bb T_N^d\to \bb R$, with $W(x) = \sum_{i=1}^d W_i(x_i)$ as defined in \eqref{w}. 

We now establish the connection between the discrete and continuous Sobolev spaces by showing how a sequence of functions defined in $\bb T_N^d$ can converge to a function in $H_{1,W}(\bb T^d)$.

We say that a family $f_N\in L^2(\bb T^d_N)$ converges strongly (resp. weakly) to the function $f\in L^2(\bb T^d)$
as $N\to \infty$ if $f^*_N$ converges strongly (resp. weakly) to the function $f$. From now on we will omit the symbol ``\,\,${}^*$\,\,'' in the $W$-interpolated function, and denoting them simply by $f_N$.

The convergence in $H^{-1}_W(\bb T^d)$ can be defined in terms of duality. Namely, we say that a functional $f_N$ on $\bb T_N^d$ converges to $f\in H^{-1}_W(\bb T^d)$ strongly (resp. weakly) if for any sequence of functions $u_N:\bb T_N^d\to\bb R$ and $u\in H_{1,W}(\bb T^d)$ such that $u_N\to u$ weakly (resp. strongly) in $H_{1,W}(\bb T^d)$, we have
$$(f_N,u_N)_N \longrightarrow (f,u), \quad \hbox{as~} N\to\infty.$$

\begin{remark}\label{cotalm}
Suppose in Lemma \ref{lmdiscreto} that $f\in L^2(\bb T^d)$, and let $u$ be a weak solution of the problem \eqref{eqdisc}, then we have the following bound
$$\|u\|_{H_{1,W}(\bb T_N^d)} \leq C \|f\|_{L^2(\bb T^d)},$$
since $\|f\|_{L^2(\bb T_N^d)} \to \|f\|_{L^2(\bb T^d)}$ as $N\to\infty$. 
\end{remark}

\section{Homogenization}
\label{sec6}
In this ``brief'' Section we prove a homogenization result for the $W$-generalized differential operator. We follow the approach considered in \cite{pr}. The study of homogenization is motivated by several applications in mechanics, physics, chemistry and engineering. The focus of our approach is to study the asymptotic behavior of effective coefficients for a family of random difference schemes whose coefficients can be obtained by the discretization of random high-contrast lattice structures.

This Section is structured as follows: in subsection 6.1 we define the concept of $H$-convergence together with some properties; subsection 6.2 deals with a description of the random environment along with some definitions, whereas the main result is proved in subsection 6.3.

\subsection{$H$-convergence}
We say that the diagonal matrix $A^N=(a_{jj}^N)$ $H$-converges to the diagonal matrix $A = (a_{jj})$, denoted by $A^N\stackrel{H}{\longrightarrow} A$, if, for every sequence $f^N\in H^{-1}_W (\bb T_N^d)$ such that $f^N\to f$ as $N\to\infty$ in $H^{-1}_W (\bb T^d)$, we have
\begin{itemize}
\item $u_N \to u_0$ weakly in $H_{1,W}(\bb T^d)$ as $N\to\infty$,
\item $a_{jj}^N \partial_{W_j}^N u_N\to a_{jj} \partial_{W_j} u_0$ weakly in $L^2_{x^j\x W_j}(\bb T^d)$ for each $j=1,\ldots,d$,
\end{itemize}
where $u_N:\bb T_N^d\to\bb R$ is the solution of the problem
$$\lambda u_N - \nabla^N A^N \nabla_W^N u_N = f_N,$$
and $u_0\in H_{1,W}(\bb T^d)$ is the solution of the problem
$$\lambda u_0 - \nabla A\nabla_W u_0 = f.$$
The notion of convergence used in both items above was defined in subsection \ref{correspondencia}.

We now obtain a property regarding $H$-convergence.

\begin{proposition}\label{hconvergencia}
Let $A^N\stackrel{H}{\longrightarrow} A$, as $N\to\infty$, with $u_N$ being the solution of
$$\lambda u_N - \nabla^N A^N \nabla_W^N u_N = f,$$
where $f\in H_W^{-1}(\bb T^d)$ is fixed. Then, the following limit relations hold true:
$$\frac{1}{N^d}\sum_{x\in\bb T_N^d} u_N^2(x) \to \int_{\bb T^d} u_0^2(x) dx,$$
and
\begin{align*}
\frac{1}{N^{d-1}}\sum_{j=1}^d\sum_{x\in\bb T_N^d} a_{jj}^N(x) (\partial_{W_j}^N u_N(x))^2 &\left[W_j((x_j+1)/N)-W_j(x_j/N)\right]\\
 &\to \sum_{j=1}^d \int_{\bb T^d} a_{jj}(x) (\partial_{W_j} u_0(x))^2 d(x^j\x W_j),
\end{align*}
as $N\to\infty$.
\end{proposition}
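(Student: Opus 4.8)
The plan is to reduce both limits to a single \emph{energy identity} obtained by testing each equation against its own solution, and then to split the two contributions apart using compactness rather than any div--curl or compensated-compactness argument.

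First I would test the discrete equation against $u_N$ itself. Since $u_N$ is the weak solution of $\lambda u_N - \nabla^N A^N\nabla_W^N u_N = f$, choosing $v=u_N$ in the weak formulation $B^N[u_N,v]=(f,v)_N$ and reading off the two terms of $B^N$ from \eqref{def BN} gives
\[
\lambda\,\|u_N\|_{L^2(\bb T_N^d)}^2 \;+\; \frac{1}{N^{d-1}}\sum_{j=1}^d\sum_{x\in\bb T_N^d} a_{jj}^N(x)\,\big(\partial_{W_j}^N u_N(x)\big)^2\,\big[W_j((x_j+1)/N)-W_j(x_j/N)\big] \;=\; (f,u_N)_N .
\]
The left-hand side is exactly $\lambda$ times the first quantity in the statement plus the second quantity. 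Testing the limit equation $\lambda u_0 - \nabla A\nabla_W u_0 = f$ against $u_0$ and using \eqref{def B} yields the continuous counterpart
\[
\lambda\,\|u_0\|_{L^2(\bb T^d)}^2 \;+\; \sum_{j=1}^d\int_{\bb T^d} a_{jj}(x)\,\big(\partial_{W_j}u_0(x)\big)^2\,d(x^j\x W_j) \;=\; (f,u_0),
\]
whose left-hand side is $\lambda$ times the first target limit plus the second target limit. Hence it suffices to prove that $(f,u_N)_N\to(f,u_0)$ and that the first quantity alone converges; the second limit then follows by subtracting $\lambda$ times the first.

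For the convergence of the pairing I would use the notion of $H^{-1}_W$-convergence from Subsection \ref{correspondencia}: the fixed functional $f$, restricted to the lattice, converges strongly in $H^{-1}_W(\bb T^d)$, while the hypothesis $A^N\stackrel{H}{\longrightarrow}A$ gives $u_N\to u_0$ weakly in $H_{1,W}(\bb T^d)$; by the definition of strong convergence in $H^{-1}_W$ these combine to give $(f,u_N)_N\to(f,u_0)$. For the first quantity I would invoke compactness: the energy estimates (Lemma \ref{lmdiscreto} and Remark \ref{cotalm}) bound $(u_N)$ uniformly in $H_{1,W}$, and the Rellich--Kondrachov theorem (Proposition \ref{f3}) asserts that the embedding $H_{1,W}(\bb T^d)\subset \bb L^2(\bb T^d)$ is compact; therefore the weak convergence $u_N\to u_0$ in $H_{1,W}$ upgrades, along the whole sequence, to strong convergence in $L^2(\bb T^d)$, whence $\|u_N\|_{L^2}^2\to\|u_0\|_{L^2}^2$. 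This is the first asserted limit, and subtraction then delivers the second. Notice that the weak flux convergence in the definition of $H$-convergence is not needed directly here; it enters only through the fact that $u_0$ solves the homogenized equation, which is precisely what makes the continuous energy identity available.

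The step I expect to require the most care is the bookkeeping between the discrete sums and their $W$-interpolations. The Rellich--Kondrachov argument takes place in $H_{1,W}(\bb T^d)$ and so applies to the interpolants $u_N^\ast$, so one must check that the discrete norm $\tfrac{1}{N^d}\sum_{x} u_N^2(x)$ and the continuous norm $\|u_N^\ast\|_{L^2(\bb T^d)}^2$ share the same limit, and that strong $L^2$-convergence in the interpolation sense (Subsection \ref{correspondencia}) really transfers to convergence of the discrete averages. This is an elementary but unavoidable estimate based on the explicit formula for the $W$-interpolation; once it is in hand, the compactness-plus-subtraction scheme closes the argument, with the compact embedding as its only genuinely analytic ingredient.
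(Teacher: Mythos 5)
Your proof is correct and follows essentially the same route as the paper's: both establish convergence of the combined quantity $\lambda\|u_N\|_{L^2(\bb T_N^d)}^2$ plus the discrete energy to its continuous counterpart, upgrade the weak convergence $u_N\rightharpoonup u_0$ in $H_{1,W}$ to strong $L^2$ convergence via Rellich--Kondrachov (the paper does this by a subsequence--contradiction argument identical in substance to yours), and then subtract. The only difference is in how the combined energy limit is derived: the paper pairs $f$ with $u_N-u_0$ and invokes the weak flux convergence $a_{jj}^N\partial_{W_j}^Nu_N\rightharpoonup a_{jj}\partial_{W_j}u_0$ to handle the cross term after summation by parts, whereas you use the two energy identities $B^N[u_N,u_N]=(f,u_N)_N$ and $B[u_0,u_0]=(f,u_0)$ together with $(f,u_N)_N\to(f,u_0)$ --- which, as you correctly observe, makes the flux-convergence item of $H$-convergence enter only implicitly through the homogenized equation.
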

\begin{proof}
We begin by noting that
\begin{equation}\label{hconv1}
\frac{1}{N^d}\sum_{x\in\bb T_N^d} f (u_N-u_0)\to 0,
\end{equation}
as $N\to\infty$ since $u_N-u_0$ converges weakly to 0 in $H_{1,W}(\bb T^d)$. On the other hand, we have
\begin{eqnarray*}
\frac{1}{N^d}\sum_{x\in\bb T_N^d} f (u_N-u_0) & = & \frac{1}{N^d}\sum_{x\in\bb T_N^d} (\lambda u_N -\nabla^N A^N\nabla_W^N u_N) (u_N-u_0)\\
&=&  \frac{\lambda}{N^d}\sum_{x\in\bb T_N^d} u_N^2 - \frac{1}{N^d}\sum_{x\in\bb T_N^d} u_N \nabla^N A^N\nabla_W^N u_N\\
&-&\frac{\lambda}{N^d}\sum_{x\in\bb T_N^d} u_N u_0+ \frac{1}{N^d}\sum_{x\in\bb T_N^d}u_0\nabla^N A^N\nabla_W^N u_N.
\end{eqnarray*}
Using the weak convergences of $u_N$ and $a_{jj}\partial_{W_j}^N u_N$, and the convergence in \eqref{hconv1}, we obtain, after a summation by parts in the above expressions,
\begin{align}
\frac{\lambda}{N^d}\sum_{x\in\bb T_N^d} u_N^2 + \frac{1}{N^{d-1}}\sum_{j=1}^d &\sum_{x\in\bb T_N^d} a_{jj}^N (\partial_{W_j}^N u_N)^2 [W_j((x_j+1)/N)-W_j(x_j)]\nonumber\\
&\stackrel{N\to\infty}{\longrightarrow} \lambda\int_{\bb T^d} u_0^2 dx + \sum_{j=1}^d \int_{\bb T^d} a_{jj} (\partial_{W_j} u_0)^2 d(x^j\x W_j).\label{hconv2}
\end{align}
By Lemma \ref{lmdiscreto}, the sequence $u_N$ is $\|\cdot\|_{1,W}$ bounded 	
uniformly. Suppose, now, that $u_N$ does not converge to $u_0$ in $L^2(\bb T^d)$. That is, there exist $\epsilon>0$ and a subsequence $(u_{N_k})$ such that
$$\|u_{N_k} - u_0\|_{L^2(\bb T^d)}>\epsilon,$$
for all $k$. By Rellich-Kondrachov Theorem (Proposition \ref{f3}), we have that there exists $v\in L^2(\bb T^d)$ and a further subsequence (also denoted by $u_{N_k}$) such that
$$u_{N_k} \stackrel{k\to\infty}{\longrightarrow} v,\quad\hbox{~in~} L^2(\bb T^d).$$
This implies that
$$u_{N_k} \to v, \quad \hbox{weakly in~}L^2(\bb T^d),$$
but this is a contradiction, since
$$u_{N_k} \to u_0,\quad \hbox{weakly in~}L^2(\bb T^d),$$
and $\|v-u_0\|_{L^2(\bb T^d)} \geq \epsilon.$ Therefore, $u_N \to u_0$ in $L^2(\bb T^d)$. The proof thus follows from expression \eqref{hconv2}.
\end{proof}
This Proposition shows that even though the $H$-convergence only requires weak convergence in its definition, it yields a convergence in the strong sense (convergence in the $L^2$-norm).
\subsection{Random environment}
In this subsection we introduce the statistically homogeneous rapidly oscillating coefficients that will be used to define the random $W$-generalized difference elliptic operators, where the $W$-generalized difference elliptic operator was given in Section \ref{sec5}. 

Let $(\Omega,\mc F, \mu)$ be a standard probability space and $\{ T_x : \Omega \to \Omega; x\in \bb Z^d\}$
be a group of $\mc F$-measurable and ergodic transformations which preserve the measure $\mu$:
\begin{itemize}
\item $ T_x : \Omega \to \Omega$ is $\mc F$-measurable for all $x \in \bb Z^d$,
\item $\mu(T_x \textbf{A}) = \mu(\textbf{A})$, for any $\textbf{A} \in \mc F$ and $x\in \bb Z^d$,
\item $T_0 = \textit{I}\;,\; \;T_x\circ T_y = T_{x+y}$,
\item For any  $f\in L^1(\Omega)$ such that $f(T_x\omega)=f(\omega)\;\; \mu$-a.s  for each $x\in \bb Z^d$, 
is equal to a constant $\mu$-a.s.
\end{itemize}
The last condition implies that the group $T_x$ is ergodic. 

Let us now introduce the vector-valued $\mc F$-measurable functions $\{a_j(\omega) ; j=1,\ldots, d\}$ such that there exists
$\theta >0$ with $$\theta^{-1} \le a_j(w)\le \theta,$$ 
for all $\omega \in \Omega$ and $ j= 1,\ldots, d$.  Then, define the diagonal matrices $A^N$ whose elements  are given by
\begin{equation}
\label{AN}
a^N_{jj}(x):=a^N_j = a_j(T_{Nx}\omega)\;,\;\;x\in T^d_N\;,\;\;j = 1, \ldots , d.
\end{equation}
\subsection{Homogenization of random operators}

Let $\lambda>0$, $f_N$ be a functional on the space of functions $h_N:\bb T_N^d\to\bb R$, $f\in  H^{-1}_W(\bb T^d)$ (see also, subsection \ref{dual2}), $u_N$ be the unique weak solution 
of $$\lambda u_N - \nabla^N A^N \nabla_W^N u_N = f_N,$$ 
and $u_0$ be the unique weak solution of
\begin{equation}\label{h4}
\lambda u_0 - \nabla A\nabla_W u_0 = f.
\end{equation}
For more details on existence and uniqueness of such solutions see Sections \ref{sec3} and \ref{sec5}.

We say that the diagonal matrix $A$ is a \textit{homogenization} of the sequence of random matrices $A^N$ if the following conditions hold:
\begin{itemize}
\item For each  sequence $f_N\to f$ in $H^{-1}_W({\bb T^d})$, $u_N$ converges weakly in $ H_{1,W}$ to $u_0$, when $N\to\infty$;\\
\item $a_i^N \partial_{W_i}^N u^N \to a_i \partial_{W_i} u,$ weakly in $L^2_{x^i\x W_i}({\bb T}^d)$ when $N\to\infty$.
\end{itemize}

Note that homogenization is a particular case of $H$-convergence. 

We will now state and prove the main result of this Section.

\begin{theorem}\label{homoge}
Let $A^N$ be a sequence of ergodic random matrices, such as the one that defines our random environment. Then, almost surely, $A^N(\omega)$ admits a homogenization, where the homogenized matrix $A$ does not depend on the realization $\omega$.
\end{theorem}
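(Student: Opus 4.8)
The plan is to adapt the oscillating test function method (Tartar's method of compensated compactness), as carried out for random difference operators in \cite{pr}, to the $W$-weighted discrete setting, reducing the theorem to the construction of \emph{deterministic} correctors on the probability space $(\Omega,\mc F,\mu)$.

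First I would establish compactness and pass to a subsequential limit. By Lemma \ref{lmdiscreto} and Remark \ref{cotalm}, the solutions $u_N$ of $\lambda u_N-\nabla^N A^N\nabla_W^N u_N=f_N$ satisfy $\|u_N\|_{H_{1,W}(\bb T_N^d)}\le C\|f\|_{L^2(\bb T^d)}$ with $C$ independent of both $N$ and $\omega$. Using the $W$-interpolation and the identification of subsection \ref{correspondencia}, reflexivity yields a subsequence with $u_N\rightharpoonup u_0$ weakly in $H_{1,W}(\bb T^d)$ and, for each $j$, $a_{jj}^N\partial_{W_j}^N u_N\rightharpoonup\xi_j$ weakly in $L^2_{x^j\x W_j}(\bb T^d)$ for some limit flux $\xi_j$; the Rellich--Kondrachov theorem (Proposition \ref{f3}) upgrades the first convergence to strong $L^2$. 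Passing to the limit in $B^N[u_N,v_N]=\<f_N,v_N\>_N$ against fixed smooth test functions shows that $u_0$ solves $\lambda u_0-\sum_{j=1}^d\partial_{x_j}\xi_j=f$ weakly. Everything then reduces to proving $\xi_j=a_{jj}\partial_{W_j}u_0$ for a deterministic diagonal $A=(a_{jj})$.

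Second, I would build the homogenized coefficients on $\Omega$. Exploiting the ergodic group $\{T_x\}$, I introduce $L^2(\Omega)$ with the shift-difference operators $D_j$, and use the Weyl (Helmholtz) orthogonal decomposition of the resulting vector fields into potential and solenoidal parts. For each direction $e_j$ I solve the corresponding corrector problem in $L^2(\Omega)$ --- equivalently, minimize the Dirichlet-type energy $\bb E_\mu[a_j(\,\cdot\,)(1+D_j v)^2]$ over admissible potential fields --- obtaining a corrector $\chi_j$, and set $a_{jj}:=\bb E_\mu[a_j(1+D_j\chi_j)]$. Because $A$ is diagonal and $W(x)=\sum_k W_k(x_k)$ separates the variables, the flux to be identified is direction-wise and the cell problems inherit this separated structure, which streamlines the construction relative to the fully coupled case of \cite{pr}.

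Third, I would identify the flux by inserting oscillating test functions of the form $\varphi(x)+N^{-1}\psi(x)\chi_j(T_{Nx}\omega)$ into the bilinear form and exploiting the div--curl cancellation. The rapidly oscillating products $a_j(T_{Nx}\omega)(1+D_j\chi_j)(T_{Nx}\omega)$ are Birkhoff averages, so by the ergodic theorem they converge $\mu$-almost surely to $\bb E_\mu[a_j(1+D_j\chi_j)]=a_{jj}$; the solenoidal property of the corrector field kills the remaining cross terms (a discrete compensated-compactness argument pairing the weakly converging flux $\xi_j$ with the weakly converging discrete gradient of the test function). This yields $\xi_j=a_{jj}\partial_{W_j}u_0$. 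Since the $a_{jj}$ are expectations they are deterministic and $\omega$-independent, and since \eqref{h4} has a unique weak solution by Proposition \ref{lm-lambda}, the limit does not depend on the extracted subsequence; hence the full sequence converges and the statement holds $\mu$-almost surely.

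The hard part will be the corrector construction and the div--curl cancellation in the $W$-weighted geometry: the measures $d(x^j\x W_j)$ and the possible jumps of the $W_j$ force one to re-derive, rather than quote, both the convergence of the oscillating test functions in the $\|\cdot\|_{1,W}$-norm and the compensated-compactness identity. Verifying that the Birkhoff averages of the $W$-weighted quantities converge as required, and that the shift $T_{Nx}$ is compatible with the $W$-interpolation of subsection \ref{correspondencia}, is where the genuinely new analysis is concentrated.
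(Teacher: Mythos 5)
Your proposal is a correct outline of the classical corrector/oscillating-test-function route of \cite{pr}, but it is a genuinely different --- and much heavier --- path than the one the paper takes. The paper's proof shares your first step almost verbatim (uniform bound from Lemma \ref{lmdiscreto} and Remark \ref{cotalm}, $W$-interpolation, extraction of a subsequence with $u_N\rightharpoonup u_0$ in $H_{1,W}(\bb T^d)$ and $a_i^N\partial^N_{W_i}u_N\rightharpoonup v_0$ in $L^2_{x^i\x W_i}(\bb T^d)$, and uniqueness of the limit problem to pass from the subsequence back to the full sequence), but it dispenses entirely with correctors: it invokes Birkhoff's ergodic theorem to get $a_i^N\to a_i:=\bb E_\mu[a_i]$ almost surely and then identifies the limit flux directly as $v_0=a_i\,\partial_{W_i}u_0$, so the homogenized matrix is simply the expectation of the coefficients. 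What your approach buys is precisely a rigorous treatment of the one step the paper passes over quickly --- the identification of the weak limit of the product $a_i^N\partial^N_{W_i}u_N$, which is exactly the product-of-two-weak-limits issue that compensated compactness is designed for --- at the cost of rebuilding the Weyl decomposition, the cell problem on $(\Omega,\mc F,\mu)$ and the div--curl identity in the $W$-weighted discrete geometry, none of which appears in the paper.

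One point you must reconcile before the proposal can stand: your homogenized coefficient is $a_{jj}=\bb E_\mu[a_j(1+D_j\chi_j)]$, whereas the paper's is the bare mean $\bb E_\mu[a_j]$, and by the variational characterization of the cell problem these coincide only when the corrector $\chi_j$ is trivial. Either the separated structure $W(x)=\sum_k W_k(x_k)$ together with the diagonality of $A$ and the specific scaling of the environment forces the corrector to vanish --- in which case you should prove that, and your argument then collapses onto the paper's much shorter one --- or the two constructions genuinely produce different matrices and at most one limit equation \eqref{h4} can be the right one. As written, your plan and the paper's proof cannot both yield Theorem \ref{homoge} with the same homogenized $A$, so this is not a cosmetic difference of route but a substantive discrepancy that your write-up needs to address explicitly before the ``hard part'' you defer (the corrector construction and the div--curl cancellation in the $W$-weighted setting) is even worth undertaking.
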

\begin{proof}
Fix $f\in H^{-1}({\bb T}^d)$, and consider the problem
$$\lambda u_N - \nabla^N A^N \nabla_W^Nu_N = f.$$
Using Lemma \ref{lmdiscreto} and Remark \ref{cotalm}, there exists a unique weak solution $u_N$ of the problem above, such that its $H_{1,W}^N$ norm is uniformly bounded in $N$. That is, there exists a constant $C>0$ such that
$$\|u_N\|_{H_{1,W}(\bb T_N^d)} \leq C \|f\|_{L^2(\bb T^d)}.$$
Thus, the $L^2(\bb T_N^d)$-norm of $a_i^N \partial_{W_i}^N u_N$ is uniformly bounded.

From $W$-interpolation (see subsection \ref{correspondencia}) and the fact that $H_{1,W}(\bb T^d)$ is a Hilbert space (Lemma \ref{sobolevhilbert}), there exists a convergent subsequence of $u_N$ (which we will also denote by $u_N$) such that
$$u_N\to u_0, \qquad \hbox{weakly in}\quad {H}_{1,W}({\bb T}^d),$$
and
\begin{equation}\label{h1}
a_i^N \partial_{W_i}^N u_N\to v_0\qquad \hbox{weakly in}\quad L^2({\bb T}^d),
\end{equation}
as $N\to\infty$; $v_0$ being some function in $L^2_{x^i\x W_i}(\bb T^d)$.

First, observe that the weak convergence in $H_{1,W}({\bb T}^d)$ implies that 
\begin{equation}\label{h2}
\partial_{W_i}^N u_N\stackrel{N\to\infty}{\longrightarrow}\partial_{W_i} u\quad\hbox{weakly in}\quad L^2_{x^i\x W_i}(\bb T^d).
\end{equation}
From Birkhoff's ergodic theorem, we obtain the almost sure convergence, as $N$ tends to infinity, of the random coefficients:
\begin{equation}\label{h3}
a_i^N {\longrightarrow}\; a_i,
\end{equation}
where $a_i =  E[a_i^{N_0}]$, for any $N_0\in\bb N$.

From convergences in \eqref{h1}, \eqref{h2} and \eqref{h3}, we obtain that
$$v_0 = a_i \partial_{W_i} u_0,$$
where, from the weak convergences, $u_0$ clearly solves problem \eqref{h4}.

To conclude the proof it remains to be shown that we can pass from the subsequence to the sequence. This follows from uniquenesses of weak solutions of the problem \eqref{h4}.
\end{proof}

\begin{remark}
At first sight, one may think that we are dealing with a very special class of matrices $A$ (diagonal matrices). Nevertheless, the random environment for random walks proposed in \cite[Section 2.3]{pr}, which is also exactly the same random environment employed in \cite{gj}, results in diagonal matrices. This is essentially due to the fact that in symmetric nearest-neighbor interacting particle systems (for example, the zero-range dynamics considered in \cite{gj}), a particle at a site $x\in\bb T_N^d$ may jump to the sites $x\pm e_j$, $j=1,\ldots,d$. In such a case, the jump rate from $x$ to $x+e_j$ determines the $j$th element of the diagonal matrix.
\end{remark}

\begin{remark}
Note that if $u\in\bb D_W$ is a strong solution (or weak, in view of Remark \ref{diracdiscreta}) of
$$\lambda u - \nabla A \nabla_W u = f$$
and $u_N$ is strong solution of the discrete problem
$$\lambda u_N- \nabla^N A^N \nabla_W^Nu_N = f$$
then, the homogenization theorem also holds, that is, $u_N$ also converges weakly in $H_{1,W}$ to $u$.
\end{remark}

\section{Hydrodynamic limit of gradient processes with conductances in random environment}
\label{aplicacao-limite}
Lastly, as an application of all the theory developed in the previous sections, we prove a hydrodynamic limit for \textit{gradient processes with conductances in random environments}. Hydrodynamic limits for gradient processes with conductances have been obtained in \cite{TC} for the one-dimensional setup and in \cite{v} for the $d$-dimensional setup. However, the proof given here is much simpler and more natural, in view of the theory developed here, than the proofs given in \cite{TC,v}. Furthermore, the proof of this hydrodynamic limit also provides an existence theorem for the $W$-generalized parabolic equations in \eqref{parabolic}.

The hydrodynamic limit allows one to deduce the macroscopic behavior of the system from the microscopic interaction among particles. Moreover, this approach justifies rigorously a method often used by physicists to establish the partial differential equations that describe the evolution of the thermodynamic characteristics of a fluid.

This Section is structured as follows: in subsection 7.1 we present the model, derive some properties and fix the notations; subsection 7.2 deals with the hydrodynamic equation; finally, subsections 7.3 and 7.4 are devoted to the proof of the hydrodynamic limit. 
\subsection{The exclusion processes with conductances in random environments}

Fix a typical realization $\omega \in \Omega$ of the random environment defined in Section \ref{sec6}. For each $x\in \bb T^d_N$ and $j = 1,\ldots, d$, 
define the symmetric rate $\xi_{x, x+e_j}=\xi_{x+e_j, x}$ by
\begin{equation}\label{rate}
\xi_{x, x+e_j} \;=\; \frac{a^N_j(x)}{N[W((x+e_j)/N) - W(x/N)]}\;=\;
\frac{a^N_j(x)}{N[W_j((x_j+1)/N) - W_j(x_j/N)]}.
\end{equation}
where $a^N_j(x)$ is given by \eqref{AN}, and  ${e_1,\ldots ,e_d}$ is the canonical basis of $\bb R^d$. Also, let
$b> -1/2\;$ and
\begin{equation*}
c_{x,x+e_j}(\eta) \;=\; 1 \;+\; b \{ \eta(x-e_j) + \eta(x+2\ e_j)\}\;,
\end{equation*}
where all sums are modulo $N$.

Distribute particles on $\bb T^d_N$ in such a way that each site
of $\bb T^d_N$ is occupied at most by one particle. Denote by $\eta$ the
configurations of the state space $\{0,1\}^{\bb T^d_N}$ so that $\eta(x) =0$
if site $x$ is vacant, and $\eta(x)=1$ if site $x$ is occupied.

The  exclusion process with conductances in a random environment is a continuous-time Markov process
$\{\eta_t : t\ge 0\}$ with state space $\{0,1\}^{\bb T^d_N} =\{\eta:\bb T^d_N \to \{0,1\}\}$, 
whose generator $L_N$ acts on functions $f:
\{0,1\}^{\bb T^d_N} \to \bb R$ as
\begin{equation}
\label{g4}
(L_N f) (\eta) \;=\;\sum^d_{j=1} \sum_{x \in \bb T^d_N} \xi_{x,x+e_j}c_{x,x+e_j}(\eta)\, 
\{ f(\sigma^{x,x+e_j} \eta) - f(\eta) \} \;,
\end{equation}
where $\sigma^{x,x+e_j} \eta$ is the configuration obtained from $\eta$
by exchanging the variables $\eta(x)$ and $\eta(x+e_j)$:
\begin{equation}
\label{g5}
(\sigma^{x,x+e_j} \eta)(y) \;=\;
\begin{cases}
\eta (x+e_j) & \text{ if } y=x,\\
\eta (x) & \text{ if } y=x+e_j,\\
\eta (y) & \text{ otherwise}.
\end{cases}
\end{equation}

We consider the Markov process  $\{\eta_t : t\ge 0\}$ on the configurations $\{0,1\}^{\bb T^d_N}$ 
associated to the generator $L_N$ in the diffusive scale, i.e., $L_N$ is speeded up by $N^2$. 

We now describe the stochastic evolution of the process. After a time given by an exponential distribution, 	
a random choice of a point $x\in \bb T^d_N$ is made. At rate $\xi_{x,x+e_j}$ the occupation variables
$\eta(x)$, $\eta(x+e_j)$ are exchanged. Note that only nearest neighbor jumps are allowed. The conductances are given by the function $W$, whereas the random environment is given by the matrix $A^N:=(a_{jj}^N(x))_{d\times d}$. The discontinuity points of $W$ may, for instance, model a membrane which obstructs the passage of particles in a fluid. For more details  see \cite{v}.

The effect of the factor $c_{x,x+e_j}(\eta)$ is the following: if the parameter $b$ is positive, the presence of particles in the neighboring
sites of the bond $\{x,x+e_j\}$ speeds up the exchange rate by a factor of
order one, and if the parameter $b$ is negative, the presence of particles in the neighboring sites slows down the exchange rate also by a factor of order one. More details are given in Remark \ref{taxac} below.

The dynamics informally presented describes a Markov evolution.
 A computation shows that the Bernoulli product measures
$\{\nu^N_\alpha : 0\le \alpha \le 1\}$ are invariant, in fact
reversible, for the dynamics. The measure $\nu^N_\alpha$ is obtained
by placing a particle at each site, independently from the other
sites, with probability $\alpha$. Thus, $\nu^N_\alpha$ is a product
measure over $\{0,1\}^{\bb T^d_N}$ with marginals given by
\begin{equation*}
\nu^N_\alpha \{\eta : \eta(x) =1\} \;=\; \alpha
\end{equation*}
for $x$ in $\bb T^d_N$. For more details see \cite[chapter 2]{kl}.\\

Consider the random walk $\{X_t\}_{t\ge0}$ of a particle in $\bb T^d_N$ induced by the generator $L_N$ given as follows.
Let $\xi_{x,x+e_j}$ given by \eqref{rate}. If the particle is on a site $x\in \bb T^d_N$, it will jump to $x+e_j$ with rate $N^2\xi_{x,x+e_j}$. Furthermore, only nearest neighbor jumps are allowed.
The generator $\bb L_N$ of the random walk $\{X_t\}_{t\ge0}$ acts on functions $f:\bb T^d_N \to \bb R$ as
\begin{equation*}
\bb L_N f\left(\frac{x}{N}\right) \; =\; \sum^d_{j=1} \bb L_N^j f\left(\frac{x}{N}\right),
\end{equation*}
where,
\begin{equation*}
\bb L_N^j f\Big(\frac{x}{N}\Big) = N^2\Big\{\xi_{x,x+e_j} \Big[f\Big(\frac{x+e_j}{N}\Big) - f\Big(\frac{x}{N}\Big)\Big]  +
\xi_{x-e_j,x} \Big[f\Big(\frac{x-e_j}{N}\Big) - f\Big(\frac{x}{N}\Big)\Big]\Big\}
\end{equation*}
It is not difficult to see that the following equality holds: 
\begin{equation}
\label{opdisc}
\bb L_N f(x/N) = \sum^d_{j=1}\partial^N_{x_j}(a^N_j\partial^N_{W_j}f)(x)\;:=\;\nabla^NA^N\nabla^N_Wf(x). 
\end{equation}

Note that several properties of the above operator have been obtained in Section \ref{sec5}.  
 The counting measure $m_N$ on $N^{-1} \bb T^d_N$ is
reversible for this process.
This random walk plays an important role in the proof of the hydrodynamic limit of the process $\eta_t$, as we will see in subsection 7.3.

Let $D(\bb R_+, \{0,1\}^{\bb T^d_N})$ be the path space of
c\`adl\`ag trajectories with values in $\{0,1\}^{\bb T^d_N}$. For a
measure $\mu_N$ on $\{0,1\}^{\bb T^d_N}$, denote by $\bb P_{\mu_N}$ the
probability measure on $D(\bb R_+, \{0,1\}^{\bb T^d_N})$ induced by the
initial state $\mu_N$ and the Markov process $\{\eta_t : t\ge 0\}$.
Expectation with respect to $\bb P_{\mu_N}$ is denoted by $\bb
E_{\mu_N}$.

\begin{remark}\label{taxac}
The specific form of the rates $c_{x,x+e_i}$ is not important, but two
conditions must be fulfilled. The rates must be strictly positive,
they may not depend on the occupation variables $\eta(x)$, $\eta(x+e_i)$,
but they have to be chosen in such a way that the resulting process is
\emph{gradient}. (cf. Chapter 7 in \cite{kl} for the definition of
gradient processes).

We may define rates $c_{x,x+e_i}$ to obtain any polynomial $\Phi$ of the
form $\Phi(\alpha) = \alpha + \sum_{2\le j\le m} a_j \alpha^j$, $m\ge
1$, with $1+ \sum_{2\le j\le m} j a_j >0$. Let, for instance, $m=3$.
Then the rates
\begin{align*}
\hat c_{x,x+e_i} (\eta)\;\;& =\;\; c_{x,x+e_i} (\eta)\;\;+\\
&b\left\{ \eta(x-2e_i) \eta(x-e_i) + \eta(x-e_i) \eta(x+2e_i) + \eta(x+2e_i)
\eta(x+3e_i)\right\},
\end{align*}
satisfy the above three conditions, 
where $c_{x,x+e_i}$ is the rate defined at the beginning of Section 2 and
$a$, $b$ are such that $1+2a + 3b>0$. An elementary computation shows
that  
$\Phi(\alpha) = 1 + a \alpha^2 + b \alpha^3$.
\end{remark}

\subsection{The hydrodynamic equation}
\label{ss2.3}
The hydrodynamic equation is, roughly, a PDE that describes the time evolution of the thermodynamical quantities of the model in a fluid.

Let $A =(a_{jj})_{d\times d}$ be a diagonal matrix with $a_{jj}>0, j=1,\ldots,d$, and consider the operator
\begin{equation*}
\nabla A\nabla_W := \sum_{j=1}^d a_{jj}\partial_{x_j} \partial_{W_j} 
\end{equation*}
defined on $\bb D_W$.

A sequence of probability measures $\{\mu_N : N\geq 1 \}$ on $\{0,1\}^{\bb T^d_N}$
is said to be associated to a profile $\rho_0 :\bb T^d \to [0,1]$ if
\begin{equation}
\label{f09}
\lim_{N\to\infty}
\mu_N \left\{ \, \Big\vert \frac 1{N^d} \sum_{x\in\bb T^d_N} H(x/N) \eta(x)
- \int H(u) \rho_0(u) du \Big\vert > \delta \right\} \;=\; 0
\end{equation}
for every $\delta>0$ and every function $H\in \bb D_W$. 

Let $\gamma : \bb T^d \to [l,r]$ be a bounded density profile and consider the parabolic differential equation
\begin{equation}
\label{g03}
\left\{
\begin{array}{l}
{\displaystyle \partial_t \rho \; =\; \nabla A\nabla_W \Phi(\rho) } \\
{\displaystyle \rho(0,\cdot) \;=\; \gamma(\cdot)}
\end{array}
\right. ,
\end{equation}
where the function $\Phi:[l,r]\to\bb R$ is given as in the beginning of Section \ref{sec4}, and $t\in[0,T]$, for $T>0$ fixed.

Recall, from Section \ref{sec4}, that a bounded function $\rho : \bb [0,T] \times \bb T^d \to [l,r]$
is said to be a weak solution of the parabolic differential equation \eqref{g03} if the following conditions hold.  $\Phi(\rho(\cdot,\cdot))$ and $\rho(\cdot,\cdot)$ belong to $L^2([0,T],H_{1,W}(\bb T^d))$, and we have the integral identity
\begin{eqnarray*}
\int_{\bb T^d} \rho(t,u) H(u)du - \int_{\bb T^d} \rho(0,u) H(u)du=
\int_0^t \, \int_{\bb T^d} \Phi (\rho(s,u))  \nabla A\nabla_W H(u)du\,ds  \;,\end{eqnarray*}
for every function $H\in \bb D_W$ and all $t\in[0,T]$. 

Existence of such weak solutions follow from the tightness of the process proved in subsection 7.3, and from the energy estimate obtained in Lemma \ref{s03}. Uniquenesses of weak solutions was proved in subsection \ref{unicidade}.

\begin{theorem}
\label{t02}
Fix a continuous initial profile $\rho_0 : \bb T^d \to [0,1]$ and
consider a sequence of probability measures $\mu_N$ on $\{0,1\}^{\bb
  T^d_N}$ associated to $\rho_0$, in the sense of \eqref{f09}. Then, for any $t\ge 0$,
\begin{equation*}
\lim_{N\to\infty}
\bb P_{\mu_N} \left\{ \, \Big\vert \frac 1{N^d} \sum_{x\in\bb T^d_N}
H(x/N) \eta_t(x) - \int H(u) \rho(t,u)\, du \Big\vert
> \delta \right\} \;=\; 0
\end{equation*}
for every $\delta>0$ and every function $H\in \bb D_W$. Here, $\rho$
is the unique weak solution of the non-linear equation \eqref{g03}
with $l=0$, $r=1$, $\gamma = \rho_0$ and $\Phi(\alpha) = \alpha + a
\alpha^2$.
\end{theorem}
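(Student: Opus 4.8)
The plan is to follow the classical martingale/entropy method for hydrodynamic limits (as in \cite{kl}), adapting each step to the $W$-conductances and to the random environment. First I would introduce the empirical measure $\pi^N_t = N^{-d}\sum_{x\in\bb T^d_N}\eta_t(x)\,\delta_{x/N}$, taking values in the space $\mc M$ of nonnegative measures on $\bb T^d$, and reduce the statement to showing that the law of $\{\pi^N_\cdot\}$ on $D([0,T],\mc M)$ concentrates, as $N\to\infty$, on the Dirac mass supported on the trajectory $\rho(t,u)\,du$, with $\rho$ the weak solution of \eqref{g03}. The proof then splits into three parts: tightness of the sequence of laws, identification of every limit point as a weak solution of \eqref{g03}, and uniqueness of such solutions (already given in Lemma \ref{s12}), which together upgrade subsequential convergence to convergence of the whole sequence and yield the law of large numbers in the statement.

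Second, the analytic heart is Dynkin's formula: for $H\in\bb D_W$ the process
$$M^N_t(H) \;=\; \<\pi^N_t,H\> - \<\pi^N_0,H\> - \int_0^t N^2 L_N\<\pi^N_s,H\>\,ds$$
is a martingale. I would compute $N^2 L_N\<\pi^N_s,H\>$ and exploit the fact that the rates $c_{x,x+e_j}$ were chosen precisely to make the process \emph{gradient}: the instantaneous current across $\{x,x+e_j\}$ is a discrete $W_j$-gradient of a local function $h_j$ whose mean under $\nu^N_\alpha$ equals $\Phi(\alpha)$. Two summations by parts, one using the conductance structure of $\xi_{x,x+e_j}$ in \eqref{rate}, then rewrite the integrand as $N^{-d}\sum_x\sum_j[\partial^N_{x_j}(a^N_j\partial^N_{W_j}H)](x/N)\,\tau_x h_j(\eta_s)$ up to negligible terms, the operator being exactly the $\nabla^N A^N\nabla_W^N$ of \eqref{opdisc}. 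A standard bound on the quadratic variation, using that the jump rates are bounded, shows that $M^N_t(H)\to0$ in $L^2(\bb P_{\mu_N})$, so only the drift survives in the limit.

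Third, I would remove the microscopic local function and the microscopic random coefficients. The replacement (one-block and two-block) lemmas, proved via entropy and Dirichlet-form estimates as in \cite{kl,TC,v}, allow one to substitute $\tau_x h_j(\eta_s)$ by $\Phi$ evaluated at the density averaged over a mesoscopic box, with an error that vanishes after $N\to\infty$ and then the box size to zero, yielding $N^{-d}\sum_x(\nabla^N A^N\nabla_W^N H)(x/N)\,\Phi(\bar\eta_s(x))$. Here the random environment is handled by Section \ref{sec6}: since $H\in\bb D_W$ is smooth while $a_j^N(x)=a_j(T_{Nx}\omega)$ oscillates rapidly, Birkhoff's ergodic theorem together with $\partial_{W_j}^N H\to\partial_{W_j}H$ gives, $\mu$-almost surely, the averaging $N^{-d}\sum_x(\nabla^N A^N\nabla_W^N H)(x/N)\,g(x/N)\to\int_{\bb T^d}(\nabla A\nabla_W H)\,g\,dx$ against slowly varying $g$, with homogenized diagonal entries $a_{jj}=E[a_j]$ independent of $\omega$, exactly as in Theorem \ref{homoge}. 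This produces the integral identity in the definition of weak solution of \eqref{g03}, while the required Sobolev regularity $\rho,\Phi(\rho)\in L^2([0,T],H_{1,W}(\bb T^d))$ follows from the energy estimate of Lemma \ref{s03} combined with Lemma \ref{energia}.

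Finally, the main obstacle I anticipate is the replacement step in this setting: the conductances $W_j$ may be discontinuous and the factors $\xi_{x,x+e_j}$ in \eqref{rate} are not uniformly bounded above or below in $N$, so the spectral-gap and moving-particle arguments underlying the one- and two-block estimates must be run with respect to the $W$-weighted Dirichlet form and made compatible with the random, rapidly oscillating weights $a_j^N$. Once these estimates are available, tightness follows from the martingale decomposition and Aldous' criterion, the identification above matches every limit point with the unique weak solution of Lemma \ref{s12}, and the theorem follows; as a byproduct one obtains existence of that weak solution.
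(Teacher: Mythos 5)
Your overall architecture (tightness, identification of limit points via Dynkin's formula and the replacement lemma, uniqueness from Lemma \ref{s12}, energy via Lemma \ref{s03} and Lemma \ref{energia}) matches the paper's. But there is a genuine gap in how you handle the random environment, and it affects both the tightness and the identification steps. You apply the martingale decomposition directly to $\<\pi^N_t,H\>$ for $H\in\bb D_W$ and then propose to average the resulting drift, $N^{-d}\sum_x(\nabla^N A^N\nabla_W^N H)(x/N)\,\tau_x h_j(\eta_s)$, by Birkhoff's ergodic theorem ``against slowly varying $g$,'' concluding that the coefficients homogenize to $E[a_j]$. This step does not work as stated: since $a_j^N(x)=a_j(T_{Nx}\omega)$ oscillates on the lattice scale, $\partial_{x_j}^N a_j^N$ is of order $N$, so $\nabla^N A^N\nabla_W^N H$ is not uniformly bounded in $N$ and does not converge, even weakly, to $\nabla A\nabla_W H$; and after a summation by parts you are left with the product of two merely weakly convergent sequences (the coefficients $a_j^N$ and a discrete gradient of the density field), whose limit need not be the product of the limits. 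The same unboundedness undermines your tightness argument: the additive functional $\int_s^t N^2L_N\<\pi^N_r,H\>dr$ cannot be bounded by $C(H)(t-s)$ uniformly in $N$ when the test function is the uncorrected $H$.

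The paper avoids this by a corrected test function device, which is the essential idea missing from your proposal. One replaces $H$ by $H^N_\lambda$, the solution of the discrete resolvent problem $\lambda H^N_\lambda-\nabla^N A^N\nabla_W^N H^N_\lambda=\lambda H-\nabla A\nabla_W H$, and works with the auxiliary process $\Pi^{\lambda,N}_t(H)=\<\pi^N_t,H^N_\lambda\>$. Then $\bb L_N H^N_\lambda=\lambda H^N_\lambda-\lambda H+\nabla A\nabla_W H$ holds exactly (Remark \ref{diracdiscreta}) and is uniformly controlled by Lemma \ref{lmdiscreto}, which yields both the quadratic-variation bound and the equicontinuity of the drift; the homogenization results (Theorem \ref{homoge} together with Proposition \ref{hconvergencia}) give $\|H^N_\lambda-H\|_N\to0$, which is what allows passage from $\Pi^{\lambda,N}_t$ back to $\pi^N_t$ and from $\bb L_N H^N_\lambda$ to $\nabla A\nabla_W H$ in the limiting integral identity. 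Your concern about re-proving the one- and two-block estimates under unbounded conductances is legitimate, but the paper resolves it by quoting Proposition \ref{s02} from \cite{v}; the work specific to the random environment is the resolvent correction, not the replacement lemma.
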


Let $\mc M$ be the space of positive measures on $\bb T^d$ with total
mass bounded by one endowed with the weak topology. Recall that
$\pi^{N}_{t} \in \mc M$ stands for the empirical measure at time $t$.
This is the measure on $\bb T^d$ obtained by rescaling space by $N$ and
by assigning mass $1/N^d$ to each particle:
\begin{equation}
\label{f01}
\pi^{N}_{t} \;=\; \frac{1}{N^d} \sum _{x\in \bb T^d_N} \eta_t (x)\,
\delta_{x/N}\;,
\end{equation}
where $\delta_u$ is the Dirac measure concentrated on $u$. 

For a function $H:\bb T^d \to \bb R$, $\<\pi^N_t, H\>$ stands for
the integral of $H$ with respect to $\pi^N_t$:
\begin{equation*}
\<\pi^N_t, H\> \;=\; \frac 1{N^d} \sum_{x\in\bb T^d_N}
H (x/N) \eta_t(x)\;.
\end{equation*}
This notation is not to be mistaken with the inner product in
$L^2(\bb T^d)$ introduced earlier. Also, when $\pi_t$ has a density
$\rho$, $\pi(t,du) = \rho(t,u) du$.

Fix $T>0$ and let $D([0,T], \mc M)$ be the space of $\mc M$-valued
c\`adl\`ag trajectories $\pi:[0,T]\to\mc M$ endowed with the
\emph{uniform} topology.  For each probability measure $\mu_N$ on
$\{0,1\}^{\bb T^d_N}$, denote by $\bb Q_{\mu_N}^{W,N}$ the measure on
the path space $D([0,T], \mc M)$ induced by the measure $\mu_N$ and
the process $\pi^N_t$ introduced in \eqref{f01}.

Fix a continuous profile $\rho_0 : \bb T^d \to [0,1]$ and consider a
sequence $\{\mu_N : N\ge 1\}$ of measures on $\{0,1\}^{\bb T^d_N}$
associated to $\rho_0$ in the sense \eqref{f09}. Further, we denote by $\bb Q_{W}$
the probability measure on $D([0,T], \mc M)$ concentrated on the
deterministic path $\pi(t,du) = \rho (t,u)du$, where $\rho$ is the
unique weak solution of \eqref{g03} with $\gamma = \rho_0$, $l_k=0$,
$r_k=1$, $k=1,\ldots,d$ and $\Phi(\alpha) = \alpha + b\alpha^2$.

In subsection \ref{ss1} we show that the sequence $\{\bb Q_{\mu_N}^{W,N} : N\ge
1\}$ is tight, and in subsection \ref{ss2} we characterize the limit
points of this sequence.

\subsection{Tightness}
\label{ss1}
The goal of this subsection is to prove tightness of sequence $\{\bb Q_{\mu_N}^{W,N} : N\ge 1\}$.
We will do it by showing that the set of equicontinuous paths of the empirical measures \eqref{f01} has probability close to one. 

Fix $\lambda >0$ and consider, initially, the auxiliary $\mc
M$-valued Markov process $\{\Pi^{\lambda,N}_t : t\ge 0\}$
defined by
\begin{equation*}
\Pi^{\lambda,N}_t (H) \;=\; \< \pi^N_t,H_\lambda^N\>\;=\;
\frac{1}{N^d} \sum _{x\in \bb Z^d} H_\lambda^N(x/N)
\eta_t (x),
\end{equation*}
for $H$ in $\bb D_W$, where $H_\lambda^N$ is the unique weak solution in $H_{1,W}(\bb T_N^d)$ (see Section \ref{sec5}) of
$$\lambda H_\lambda^N - \nabla^N A^N\nabla_W^N H_\lambda^N = \lambda H - \nabla A \nabla_W H,$$
with the right-hand side being understood as the restriction of the function to the lattice $\bb T_N^d$ (see subsection \ref{correspondencia}).

We first prove tightness of the process $\{\Pi^{\lambda,N}_t : 0\le t
\le T\}$,then we show that $\{\Pi^{\lambda,N}_t
: 0\le t \le T\}$ and $\{\pi^{N}_t : 0\le t \le T\}$ are not far apart.

It is well known \cite{kl} that to prove
tightness of $\{\Pi^{\lambda,N}_t : 0\le t \le T\}$ it is enough to
show tightness of the real-valued processes $\{\Pi^{\lambda,N}_t (H) :
0\le t \le T\}$ for a set of smooth functions $H:\bb T^d\to \bb R$ dense
in $C(\bb T^d)$ for the uniform topology.

Fix a smooth function $H: \bb T^d \to \bb R$. Keep in mind that $\Pi^{\lambda,N}_t (H) = \<\pi^N_t, H_\lambda^N \>$,
and denote by $M^{N,\lambda}_t$ the martingale defined by
\begin{equation}
\label{f10}
M^{N,\lambda}_t \;=\;  \Pi^{\lambda,N}_t (H) \;-\;
\Pi^{\lambda,N}_0 (H) \;-\; \int_0^t ds \, N^2 L_N \<\pi^N_s ,
H_\lambda^N \> \;.
\end{equation}
Clearly, tightness of $\Pi^{\lambda,N}_t (H)$ follows from tightness
of the martingale $M^{N,\lambda}_t$ and tightness of the additive
functional $\int_0^t ds \, N^2 L_N \<\pi^N_s , H_\lambda^N \>$.

A long computation, albeit simple, shows that the quadratic variation
$\<M^{N,\lambda}\>_t$ of the martingale $M^{N,\lambda}_t$ is given by:
\begin{align*}
 \frac{1}{N^{2d-1}}\sum^d_{j=1}\sum_{x\in \bb T^d}[\partial_{W,j}^N H^N_{\lambda}(x/N)]^2&[W((x+e_j)/N) - W(x/N)]\times\\
&\times\int_0^t c_{x,x+e_j}(\eta_s) \, [\eta_s(x+e_j) - \eta_s(x)]^2 \, ds\;.
\end{align*}
 In particular, by Lemma \ref{lmdiscreto},
\begin{equation*}
  \<M^{N,\lambda}\>_t \;\le\; \frac{C_0 t}{N^{2d-1}} \sum^d_{j=1} \|H_\lambda^N\|_{W_j,N}^2 \;\le\; \frac{C(H)t}{\lambda N^d},
\end{equation*}
for some finite constant $C(H)$, which depends only on $H$. Thus, by
Doob inequality, for every $\lambda>0$, $\delta>0$,
\begin{equation}
\label{f02}
\lim_{N\to\infty} \bb P_{\mu_N} \left[ \sup_{0\le t\le T}
\big\vert M^{N,\lambda}_t \big\vert \, > \, \delta \right]
\;=\; 0\;.
\end{equation}
In particular, the sequence of martingales $\{M^{N,\lambda}_t : N\ge
1\}$ is tight for the uniform topology.

It remains to be examined the additive functional of the decomposition
\eqref{f10}. The  generator of the exclusion process $L_N$  can be decomposed
 in terms of the generator of the random walk $\bb L_{N}$.  By a long but simple computation, we obtain that
 $N^2 L_N \<\pi^N , H_\lambda^N \>$ is equal to
\begin{eqnarray*}
\!\!\!\!\!\!\!\!\!\!\!\!\!\! &&
\sum^d_{j=1}\big \{\frac {1}{N^d} \sum_{x\in \bb T^d_N} (\bb L_N^j H_\lambda^N)(x/N)\, \eta(x)
\\
\!\!\!\!\!\!\!\!\!\!\!\!\!\! && \quad
+\; \frac{b}{N^d} \sum_{x\in \bb T^d_N} \big [ (\bb L_N^j H_\lambda^N)
((x+e_j)/N) + (\bb L_{N}^j H_\lambda^N) (x/N) \big ] \,
(\tau_x h_{1,j}) (\eta) \\
\!\!\!\!\!\!\!\!\!\!\!\!\!\! && \qquad
- \; \frac{b}{N^d} \sum_{x\in \bb T^d_N} (\bb L_N^j H_\lambda^N)
(x/N) (\tau_x h_{2,j}) (\eta)\big \}\;,
\end{eqnarray*}
where $\{\tau_x: x\in \bb Z^d\}$ is the group of translations, so that
$(\tau_x \eta)(y) = \eta(x+y)$ for $x$, $y$ in $\bb Z^d$, and the
sum is understood modulo $N$. Also, $h_{1,j}$, $h_{2,j}$ are the cylinder functions
\begin{equation*}
h_{1,j}(\eta) \;=\; \eta(0) \eta({e_j})\;,\quad h_{2,j}(\eta) \;=\; \eta(-e_j)  \eta(e_j)\;.
\end{equation*}

Since $H_\lambda^N$ is the weak solution of the discrete equation, we have by Remark \ref{diracdiscreta} that it is also a strong solution. Then, we may replace $\bb L_N H_\lambda^N$ by $U_\lambda^N
= \lambda H_\lambda^N - H$ in the previous formula. In particular,
for all $0\le s<t\le T$,
\begin{equation*}
\Big\vert \int_s^t dr \, N^2 L_N \<\pi^N_r ,H_\lambda^N \> \Big\vert
\;\le\; \frac {(1+3|b|)(t-s)}{N^d} \sum_{x\in \bb T^d_N} |U_\lambda^N (x/N)|
\;.
\end{equation*}
It follows from the estimate given in Lemma \ref{lmdiscreto}, and from Schwartz
inequality, that the right hand side of the previous expression is bounded above by $C(H,b) (t-s)$
uniformly in $N$, where $C(H,b)$ is a finite constant depending only
on $b$ and $H$. This proves that the additive part of the
decomposition \eqref{f10} is tight for the uniform topology and,
therefore, that the sequence of processes $\{\Pi^{\lambda,N}_t :N\ge
1\}$ is tight.

\begin{lemma}
\label{s06}
The sequence of measures $\{\bb Q_{\mu^N}^{W,N} : N\ge 1\}$ is tight
for the uniform topology.
\end{lemma}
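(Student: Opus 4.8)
The plan is to transfer the tightness already established for the auxiliary process $\{\Pi^{\lambda,N}_t : 0\le t\le T\}$ to the empirical measure process $\{\pi^N_t : 0\le t\le T\}$, and then to use the standard reduction of measure-valued tightness to a dense family of real-valued test functions. By the criterion recalled above (see \cite{kl}), tightness of $\{\bb Q^{W,N}_{\mu^N}\}$ on $D([0,T],\mc M)$ for the uniform topology follows once we know that $\{\pi^N_\cdot(H) : N\ge 1\}$ is tight in $D([0,T],\bb R)$ for every $H$ in a class dense in $C(\bb T^d)$. I would therefore fix a test function $H\in\bb D_W$ and a parameter $\lambda>0$, recalling that $\{\Pi^{\lambda,N}_\cdot(H)\}$ has just been shown to be tight.

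The key step is to show that $\Pi^{\lambda,N}_\cdot(H)$ and $\pi^N_\cdot(H)$ are uniformly close. Here I would exploit that the occupation variables are bounded, $0\le\eta_t(x)\le 1$, to obtain the \emph{deterministic}, $t$-independent bound
\begin{equation*}
\sup_{0\le t\le T}\big|\Pi^{\lambda,N}_t(H)-\pi^N_t(H)\big|\;\le\;\frac1{N^d}\sum_{x\in\bb T^d_N}\big|H^N_\lambda(x/N)-H(x/N)\big|.
\end{equation*}
The crucial observation is that $H^N_\lambda$ is the discrete weak solution of the elliptic problem whose right-hand side is $\lambda H-\nabla A\nabla_W H$, while the continuous weak solution of $\lambda u-\nabla A\nabla_W u=\lambda H-\nabla A\nabla_W H$ is precisely $u=H$. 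Consequently, since $A^N$ homogenizes to $A$, Theorem \ref{homoge} together with Proposition \ref{hconvergencia} gives that $H^N_\lambda$ converges to $H$ not merely weakly but \emph{strongly}, so that $\frac1{N^d}\sum_x|H^N_\lambda(x/N)-H(x/N)|^2\to 0$; an application of the Cauchy--Schwarz inequality then forces the right-hand side above to vanish as $N\to\infty$. No extra argument is needed to make this estimate uniform in $t$, since the bound does not involve $t$ at all.

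To conclude, I would combine the two facts: $\{\Pi^{\lambda,N}_\cdot(H)\}$ is tight and $\sup_{0\le t\le T}|\pi^N_t(H)-\Pi^{\lambda,N}_t(H)|\to 0$. In the uniform topology the latter is exactly the statement that the distance between the two paths tends to zero, so tightness transfers to $\{\pi^N_\cdot(H) : N\ge 1\}$ (for instance, by inserting $\Pi^{\lambda,N}_\cdot(H)$ and using the triangle inequality in the modulus of continuity). Since $H\in\bb D_W$ was arbitrary and this class is dense, the reduction criterion of \cite{kl} yields tightness of the measure-valued process $\{\pi^N_t : 0\le t\le T\}$, that is, of $\{\bb Q^{W,N}_{\mu^N} : N\ge 1\}$.

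The step I expect to be the main obstacle is the second one: making rigorous that the discrete correctors $H^N_\lambda$ converge to $H$ in the \emph{strong} (discrete $L^2$) sense required to annihilate the right-hand side of the displayed bound. This is exactly where the homogenization machinery is indispensable, as weak convergence alone would not guarantee that the averaged absolute difference vanishes; and one must be slightly careful in passing between the $W$-interpolation used to define convergence and the normalized counting measure appearing in the estimate, invoking the continuity of $H$ and a polarization argument applied to the squared norms furnished by Proposition \ref{hconvergencia}.
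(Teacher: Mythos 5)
Your proposal is correct and follows essentially the same route as the paper: reduce tightness of $\{\bb Q^{W,N}_{\mu^N}\}$ to tightness of the real-valued processes $\pi^N_\cdot(H)$ for $H\in\bb D_W$, transfer it from the auxiliary process $\Pi^{\lambda,N}_\cdot(H)$ via the bound $\sup_t|\Pi^{\lambda,N}_t(H)-\<\pi^N_t,H\>|\le C\|H^N_\lambda-H\|_N$ (using that there is at most one particle per site), and kill the right-hand side by the strong $L^2(\bb T^d_N)$ convergence $H^N_\lambda\to H$ supplied by Theorem \ref{homoge} and Proposition \ref{hconvergencia}. The only cosmetic difference is that the paper passes through Chebyshev's inequality and the squared norm rather than your deterministic sup bound plus Cauchy--Schwarz; the substance is identical.
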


\begin{proof}
Fix $\lambda > 0$. It is enough to show that for every function $H\in \bb D_W$
and every $\epsilon>0$, we have
\begin{equation*}
\lim_{N\to\infty} \bb P_{\mu^N} \left[
\sup_{0\le t\le T} |\, \Pi^{\lambda,N}_t (H) -
\<\pi^N_t, H\>\, | > \epsilon
\right] \;=\;0,
\end{equation*}
whence tightness of $\pi^N_t$ follows from 
tightness of $\Pi^{\lambda,N}_t$. By Chebyshev's inequality, the last expression is bounded above by
$$\bb E_{\mu_N} \left[\sup_{0\le t\le T} |\, \Pi^{\lambda,N}_t (H) -
\<\pi^N_t, H\>\, |^2\right] \leq 2\|H_\lambda^N - H\|_N^2,
$$
since there exists at most one particle per site. By Theorem \ref{homoge} and Proposition \ref{hconvergencia}, $\|H_\lambda^N - H\|_N^2\to 0$ as $N\to\infty$, and the proof follows.
\end{proof}

\subsection{Uniqueness of limit points}
\label{ss2}

We prove in this subsection that all limit points $\bb Q^*$ of the
sequence $\bb Q^{W,N}_{\mu_N}$ are concentrated on absolutely
continuous trajectories $\pi(t,du) = \rho(t,u) du$, whose density
$\rho(t,u)$ is a weak solution of the hydrodynamic equation
\eqref{g03} with $l=0$, $r=1$ and $\Phi(\alpha)=\alpha + a\alpha^2$.

We now state a result necessary to prove the uniqueness of limit points. Let, for a local function $g: \{0,1\}^{\bb Z^d} \to \bb R$, 
$\tilde g :[0,1]\to \bb R$ be the expected value of $g$ under the
stationary states:
\begin{equation*}
\tilde g (\alpha) \;=\; E_{\nu_\alpha} [ g(\eta)]\;.
\end{equation*}
For $\ell \ge 1$ and $d$-dimensional integer $x=(x_1,\ldots,x_d)$, denote 
by $\eta^{\ell} (x)$ the empirical density of particles in the box 
$ \bb B_+^\ell(x)= \{(y_1,\ldots,y_d)\in\bb Z^d\;;0\le y_i-x_i < \ell\}$: 
\begin{equation*}
  \eta^{\ell} (x) \;=\; \frac{1}{\ell^d}  \sum_{y\in \bb B_+^\ell(x)} \eta(y)\;.
\end{equation*}

\begin{proposition}[Replacement lemma]
\label{s02}
Fix a cylinder function $g$ and a sequence of functions $\{F_N : N\ge
1\}$, $F_N : N^{-1} \bb T^d_N\to \bb R$ such that
\begin{equation*}
\limsup_{N\to\infty} \frac 1{N^d} \sum_{x\in \bb T^d_N} F_N(x/N)^2 \;<\;
\infty\; .
\end{equation*}
Then, for any $t>0$ and any sequence of probability measures $\{\mu_N
: N\ge 1\}$ on $\{0,1\}^{\bb T^d_N}$,
\begin{equation*}
\limsup_{\varepsilon\to 0} \limsup_{N\to\infty}
\bb E_{\mu_N} \Big[ \, \Big| \int_0^t  \frac 1{N^d}
\sum_{x\in \bb T_N^d} F_N(x/N) \, \big \{ \tau_x g (\eta_s) - \tilde g
(\eta^{\varepsilon N}_s(x))\ d_s \big \} \Big| \, \Big]  \;=\; 0\;.
\end{equation*}
\end{proposition}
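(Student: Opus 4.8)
The plan is to follow the classical entropy method of Guo, Papanicolaou and Varadhan, reducing the dynamical statement to two static estimates---the one-block and two-block estimates. Write
$$V_{\varepsilon,N}(\eta) \;=\; \frac{1}{N^d}\sum_{x\in\bb T^d_N} F_N(x/N)\big\{\tau_x g(\eta) - \tilde g(\eta^{\varepsilon N}(x))\big\}.$$
First I would invoke the entropy inequality together with the a priori bound $H(\mu_N \mid \nu_\alpha)\le C_0 N^d$, valid because $\nu_\alpha$ is a product measure bounded below on the finite state space $\{0,1\}^{\bb T^d_N}$. For any $B>0$ this bounds $\bb E_{\mu_N}\big[\big|\int_0^t V_{\varepsilon,N}(\eta_s)\,ds\big|\big]$ by $C_0/B$ plus $(BN^d)^{-1}\log\bb E_{\nu_\alpha}\big[\exp\big(BN^d\big|\int_0^t V_{\varepsilon,N}\,ds\big|\big)\big]$. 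Removing the absolute value by the elementary inequality $e^{|x|}\le e^x+e^{-x}$, and then applying the Feynman--Kac formula to the $N^2$-speeded generator, I reduce the second term to the variational expression
$$t\,\sup_f\Big\{\int V_{\varepsilon,N}\,f\,d\nu_\alpha \;-\; \frac{N^2}{BN^d}\,\langle -L_N\sqrt f,\sqrt f\rangle_{\nu_\alpha}\Big\},$$
the supremum being over densities $f$ with respect to $\nu_\alpha$. Letting $B\to\infty$ at the very end kills the entropy term, so it remains to show this supremum vanishes in the iterated limit $\limsup_{\varepsilon\to0}\limsup_{N\to\infty}$.

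Next I would split the replacement $\tau_x g \to \tilde g(\eta^{\varepsilon N}(x))$ into the two standard steps. The one-block estimate handles $\tau_x g(\eta)-\tilde g(\eta^{\ell}(x))$ for a fixed microscopic box size $\ell$: here one localizes the variational problem to boxes of side $\ell$, uses the spectral gap of the exclusion dynamics restricted to such a box to absorb the local average of $\tau_x g$ against the Dirichlet form, and invokes the equivalence of ensembles (the local law of large numbers under the canonical measures) to identify the limit with $\tilde g$. The two-block estimate then replaces $\tilde g(\eta^\ell(x))$ by $\tilde g(\eta^{\varepsilon N}(x))$ by comparing empirical densities over boxes at scales $\ell$ and $\varepsilon N$; this again reduces to a Dirichlet-form bound on the cost of transporting a particle between two distant boxes, controlled uniformly once one sends $N\to\infty$ and then $\ell\to\infty$. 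The uniform boundedness of $F_N$ in the averaged sense, of the $a^N_j$ by $\theta^{\pm1}$ as in \eqref{cota A}, and of $c_{x,x+e_j}$ away from $0$ and $\infty$ (guaranteed by $b>-1/2$, since $\eta\in\{0,1\}$), are used throughout to keep all constants under control.

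The main obstacle is that the conductances $\xi_{x,x+e_j}$ are \emph{not} uniformly bounded below: wherever $W$ has a discontinuity the increment $W((x+e_j)/N)-W(x/N)$ is of order one rather than $1/N$, so the corresponding bond rate is of order $N$ rather than $N^2$, and the spectral gap of the exclusion process on a box straddling such a membrane degenerates. I would handle this exactly as in the one- and $d$-dimensional treatments of \cite{TC,v}: fix a threshold and note that only finitely many jumps of each $W_k$ per period exceed it, so only $O(N^{d-1})$ bonds per direction carry a small conductance. The contribution of these bonds to the empirical average is $O(1/N)$ and hence negligible, while on the complementary region the conductances are bounded below and the standard spectral-gap and equivalence-of-ensembles arguments apply verbatim. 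Since the random coefficients $a^N_j$ enter only through the bounds in \eqref{cota A}, they affect nothing but constants, so the replacement lemma of \cite{v} carries over to the present random-environment setting.
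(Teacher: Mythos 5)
The paper itself gives no proof of Proposition \ref{s02}: it defers entirely to \cite[subsection 5.3]{v}, and your outline (entropy inequality, Feynman--Kac, one- and two-block estimates, special treatment of the bonds where $W$ jumps) is exactly the scheme carried out there, following \cite{TC}. So in structure you reproduce the intended argument. One step, however, is stated backwards, and it is precisely the step where the conductances enter. Fix $\delta>0$ and remove the bonds in direction $j$ whose increment $\Delta_{x,j}:=W_j((x_j+1)/N)-W_j(x_j/N)$ exceeds $\delta$; since the increments sum to $W_j(1)-W_j(0)$ over a period, there are $O(1)$ such values of $x_j$ and hence $O(N^{d-1})$ such bonds, as you say. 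But on the complementary ``good'' bonds the rate \eqref{rate} only satisfies $\xi_{x,x+e_j}\ge \theta^{-1}/(N\delta)$, which degenerates as $N\to\infty$; it is \emph{not} bounded below uniformly in $N$, so the claim that ``the standard spectral-gap and equivalence-of-ensembles arguments apply verbatim'' on that region would fail as written. The correct mechanism is the opposite of a lower bound on the conductances: the Dirichlet form produced by the Feynman--Kac/entropy step is weighted by $\Delta_{x,j}^{-1}$, and on a good bond the factor $\Delta_{x,j}^{-1}\ge\delta^{-1}$ converts the a priori bound into an estimate of order $\delta$ on the \emph{standard} (conductance-free) bond Dirichlet form $I_{x,x+e_j}(\sqrt f)$. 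One then runs the usual one-block/two-block limits in $N$ and in the box size, and sends $\delta\downarrow 0$ last, so that the localized densities become asymptotically invariant for the ordinary exclusion dynamics and the equivalence of ensembles applies. With that correction (which is how \cite{TC,v} actually argue), your proof coincides with the one the paper cites; the random coefficients $a_j^N$ indeed only affect constants through \eqref{cota A}.
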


The proof can be found in \cite[subsection 5.3]{v}.

Let $\bb Q^*$ be a limit point of the sequence $\bb Q^{W,N}_{\mu_N}$
and assume, without loss of generality, that $\bb Q^{W,N}_{\mu_N}$
converges to $\bb Q^*$.

Since there is at most one particle per site, it is clear that $\bb
Q^*$ is concentrated on trajectories $\pi_t(du)$ which are absolutely
continuous with respect to the Lebesgue measure, $\pi_t(du) =
\rho(t,u) du$, and whose density $\rho$ is non-negative and bounded by
$1$.

Fix a function $H\in\bb D_W$  and
$\lambda>0$.  Recall the definition of the martingale
$M^{N,\lambda}_t$ introduced in the previous section. From \eqref{f02} we have,
for every $\delta>0$,
\begin{equation*}
\lim_{N\to\infty} \bb P_{\mu_N} \left[ \sup_{0\le t\le T}
\big\vert M^{N,\lambda}_t \big\vert \, > \, \delta \right]
\;=\; 0\;,
\end{equation*}
and from \eqref{f10}, for fixed $0<t\le T$ and $\delta>0$, we have
\begin{equation*}
\lim_{N\to\infty} \bb Q^{W,N}_{\mu_N} \left[ \,
\Big\vert \<\pi^N_t, H^N_\lambda \> \;-\;
\<\pi^N_0, H^N_\lambda \> \;-\;
\int_0^t ds \, N^2 L_N \<\pi^N_s , H^N_\lambda \>
\Big\vert \, > \, \delta \right] \;=\; 0.
\end{equation*}

Note that the expression $N^2 L_N \<\pi^N_s , H^N_\lambda\>$ has been computed in the previous subsection in terms of generator $\bb L_N$. On the other hand, $\bb L_N H_\lambda^N = \lambda H_\lambda^N - \lambda H + \nabla A\nabla_W H$. Since there is at most one particle per site, we may apply Theorem \ref{homoge} to replace $\<\pi^N_t,
H^N_\lambda \>$ and $\<\pi^N_0, H^N_\lambda \>$ by $\<\pi_t, H\>$ and $\<\pi_0,H\>$, respectively, and replace $\bb L_N H_\lambda^N$ by $\nabla A\nabla_W H$ plus a term that vanishes as $N\to\infty$.

Since $E_{\nu_\alpha}[h_{i,j}] = \alpha^2$, $i=1$, $2$ and $j = 1,\ldots, d$, we have by Proposition \ref{s02} that, for every $t>0$, $\lambda>0$,
$\delta>0$, $i=1$, $2$,
\begin{align*}
\lim_{\varepsilon \to 0} \limsup_{N\to\infty}
\bb P_{\mu_N} \Big[ \, \Big| \int_0^t \!\!\! ds\, \frac 1{N^d}
\sum_{x\in \bb T^d_N}& \nabla A\nabla_W H  (x/N) \times\\
&\times\left\{ \tau_x h_{i,j} (\eta_s) -
\left[\eta^{\varepsilon N}_s(x)\right]^2 \right\} \, \Big|
\, > \, \delta \, \Big]  \;=\; 0.
\end{align*}

Since $\eta^{\varepsilon N}_s(x) = \varepsilon^{-d} \pi^N_s (\prod_{j=1}^d[x_j/N, x_j/N + \varepsilon e_j])$,
 we obtain, from the previous considerations, that
\begin{align*}
\lim_{\varepsilon \to 0} \limsup_{N\to\infty} \bb Q^{W,N}_{\mu_N} \left[ \,
\Big\vert\right. &\<\pi_t, H \> \;-\; \\
-\; \<\pi_0, H \> \;-\;&\left.
\int_0^t ds \, \Big\< \Phi \big(\varepsilon^{-d} \pi^N_s (\prod_{j=1}^d[\cdot, \cdot
+ \varepsilon e_j]) \big) \,,\, \nabla A\nabla_W H\Big>
\Big\vert > \delta \right] \;=\; 0\;.
\end{align*}

Using the fact that $\bb Q^{W,N}_{\mu_N}$ converges in the uniform topology to $\bb
Q^*$, we have that
\begin{align*}
\lim_{\varepsilon \to 0}\bb Q*\left[ \,
\Big\vert \<\pi_t, G_\lambda H \> \right.&\;-\; \; \<\pi_0, G_\lambda H \> \;-\;\\
-\;\int_0^t ds \, &\left.\Big\< \Phi \big (\varepsilon^{-d} \pi_s (\prod_{j=1}^d[\cdot, \cdot
+ \varepsilon e_j]) \big) \,,\, U_\lambda\Big>
\Big\vert > \delta \right] \;=\; 0\;.
\end{align*}

Recall that $\bb Q^{*}$ is concentrated on absolutely continuous paths
$\pi_t(du) = \rho(t,u) du$ with positive density bounded by $1$. Therefore,
$\varepsilon^{-d}\pi_s(\prod_{j=1}^d[\cdot, \cdot + \varepsilon e_j])$ converges in
$L^1(\bb T^d)$ to $\rho(s,.)$ as $\varepsilon\downarrow 0$. Thus,
\begin{eqnarray*}
\bb Q^{*} \left[ \,
\Big\vert \<\pi_t, H \> \;-\;
 \<\pi_0, H \> \;-\;
\int_0^t ds \, \< \Phi (\rho_s) \,,\, \nabla A\nabla_W H \>
\Big\vert > \delta \right] \;=\; 0.
\end{eqnarray*}
Letting $\delta\downarrow
0$, we see that, $\bb Q^{*}$ a.s.,
\begin{eqnarray*}
\int_{\bb T^d} \rho(t,u) H(u)du - \int_{\bb T^d} \rho(0,u) H(u)du=
\int_0^t \, \int_{\bb T^d} \Phi (\rho(s,u))  \nabla A\nabla_W H(u)du\,ds  \;.
\end{eqnarray*}

This identity can be extended to a countable set of times $t$. Taking
this set to be dense we
obtain, by continuity of the trajectories $\pi_t$, that it holds for all $0\le t\le T$.

We now have a lemma regarding the energy of such limit points whose proof can be found in \cite[Section 6]{v}.

\begin{lemma}
\label{s03}
There exists a finite constant $K_1$, depending only on $b$, such that
\begin{align*}
E_{\bb Q^*_{W}} \left[ \sup_{H\in\bb D_W} \left\{ \int_0^T ds\, \int_{\bb T^d}
dx\right.\right. &\, (\partial_{x_j} \partial_{W_j} H) (s, x) \, \Phi(\rho(s,x)) \\ 
- \; K_1 \int_0^T ds\, &\left.\left. \int_{\bb T^d} [\partial_{W_j} H (s, x)]^2
\, d(x^j\otimes W_j) \right\} \right] \; \le \; K_0.
\end{align*}
\end{lemma}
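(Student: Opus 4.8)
The plan is to establish this energy estimate by the Guo--Papanicolaou--Varadhan scheme, combining the entropy inequality, the Feynman--Kac formula, and the variational (Rayleigh--Ritz) characterization of the principal eigenvalue of the perturbed generator, as in \cite[Section 6]{v}. Since the class of admissible test functions admits a countable dense subfamily, the first reduction is to replace the supremum over all $H\in\bb D_W$ by a maximum over a finite family $H_1,\dots,H_k$; by monotone convergence it suffices to bound the expectation of $\max_{1\le i\le k}\Lambda_j(H_i,\rho)$ uniformly in $k$, where $\Lambda_j(H,\rho)$ denotes the expression inside the supremum.

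First I would pass to the microscopic level. Using that $\bb Q^{W,N}_{\mu_N}\to\bb Q^*_W$ in the uniform topology, that the functionals involved are continuous, and the Replacement Lemma (Proposition \ref{s02}) together with $E_{\nu_\alpha}[h_{i,j}]=\alpha^2$, the quantity $\Phi(\rho(s,x))$ is replaced by a cylinder function $\tau_x g(\eta_s)$ with $\widetilde g=\Phi$, while the penalizing term passes to its discrete analogue through the $W$-interpolation and the homogenization results of Section \ref{sec6}. Thus $E_{\bb Q^*_W}[\max_i\Lambda_j(H_i,\rho)]$ is bounded by the limit as $N\to\infty$ of $\bb E_{\mu_N}[\max_i\Lambda_j^N(H_i)]$, the discrete functional built from the empirical measure.

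Next I would estimate the microscopic expectation. Writing $\max_i a_i\le N^{-d}\log\sum_i e^{N^d a_i}$ and applying Jensen's inequality reduces the problem, for each fixed $i$, to bounding $N^{-d}\log\bb E_{\mu_N}[\exp(N^d\Lambda_j^N(H_i))]$. The entropy inequality controls $\bb E_{\mu_N}$ by $\bb E_{\nu_\alpha}$ at the cost of $H(\mu_N\mid\nu_\alpha)\le C N^d$, and the Feynman--Kac formula bounds the exponential moment under $\nu_\alpha$ by
\begin{equation*}
\exp\Big\{\int_0^T\!\!\sup_f\big[\,\< V_i(s), f\>_{\nu_\alpha}-N^2\mc D_N(f)\,\big]\,ds\Big\},
\end{equation*}
where $\mc D_N$ is the Dirichlet form of the exclusion dynamics, the supremum runs over $\nu_\alpha$-densities $f$, and $V_i(s)(\eta)=\sum_x(\partial^N_{x_j}\partial^N_{W_j}H_i)(x/N)\,\tau_x g(\eta)$ is the linear part of the functional (the quadratic penalty, being deterministic, factors out). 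Hence everything comes down to showing that this variational expression is bounded by the penalizing term plus a multiple of $N^d$.

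The crux, and the main obstacle, is this last bound. Performing a discrete summation by parts in the variable $x_j$ transfers the operator $\partial^N_{x_j}$ onto $\tau_x g$, and the gradient nature of the dynamics (Remark \ref{taxac}) lets the resulting increments of $\tau_x g$ be written in terms of the instantaneous currents across the bonds $\{x,x+e_j\}$. A Cauchy--Schwarz step then bounds $\< V_i(s),f\>_{\nu_\alpha}$ by a product of a factor controlled by $\mc D_N(f)$ and a factor equal to the discrete $L^2_{x^j\x W_j}$-norm of $\partial^N_{W_j}H_i$; a Young inequality absorbs the first factor into $N^2\mc D_N(f)$ at the expense of the second. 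Choosing $K_1$ large enough --- depending only on $b$ through the uniform bounds on the rates $c_{x,x+e_j}$ --- makes the penalizing term dominate, so the variational expression is at most $C N^d$ uniformly in $N$ and in the family $H_1,\dots,H_k$. Letting $N\to\infty$ and then $k\to\infty$ yields the constant $K_0$. The delicate point is to carry out the summation by parts and the Cauchy--Schwarz/Young estimate keeping the $W_j$-weights and the factors of $N$ in balance, so that $K_1$ is genuinely independent of $N$ and of the test functions.
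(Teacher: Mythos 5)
Your outline is essentially the argument the paper relies on: the paper does not prove Lemma \ref{s03} itself but defers to \cite[Section 6]{v}, and that proof is exactly the Guo--Papanicolaou--Varadhan scheme you describe (reduction of the supremum to a finite maximum, the bound $\max_i a_i\le N^{-d}\log\sum_i e^{N^d a_i}$, entropy inequality, Feynman--Kac with the variational formula for the principal eigenvalue, and a summation by parts plus Cauchy--Schwarz/Young absorption into the Dirichlet form using the gradient condition). Your proposal is correct and takes the same route, including the correct identification of the delicate point, namely keeping the $W_j$-weights and powers of $N$ balanced so that $K_1$ is independent of $N$ and of the test functions.
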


From Lemma \ref{s03}, we may conclude that all limit points have, almost surely, finite energy, and therefore, by Lemma \ref{energia}, $\Phi(\rho(\cdot,\cdot))\in L^2([0,T],H_{1,W}(\bb T^d))$. Analogously, it is possible to show that 
 $\rho(\cdot,\cdot)$ has finite energy and hence it belongs to $L^2([0,T],H_{1,W}(\bb T^d))$.
\begin{proposition}
\label{s15}
As $N\uparrow\infty$, the sequence of probability measures $\bb
Q_{\mu_N}^{W,N}$ converges in the uniform topology to $\bb Q_{W}$.
\end{proposition}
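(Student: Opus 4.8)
The plan is to assemble the three ingredients that have already been prepared: tightness of the sequence $\{\bb Q_{\mu_N}^{W,N}\}$ (Lemma \ref{s06}), the characterization of its limit points carried out in subsection \ref{ss2}, and the uniqueness of weak solutions of the hydrodynamic equation \eqref{g03} (Lemma \ref{s12}). The standard paradigm for hydrodynamic limits is that relative compactness together with a \emph{unique} limit point forces convergence of the entire sequence, and this is exactly the logic I would follow here.

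First I would invoke Lemma \ref{s06}: since $\{\bb Q_{\mu_N}^{W,N} : N\ge 1\}$ is tight in the uniform topology on $D([0,T],\mc M)$, Prohorov's theorem gives relative compactness, so every subsequence admits a further subsequence converging to some limit point $\bb Q^*$. Next I would argue that any such $\bb Q^*$ must equal $\bb Q_W$. The analysis of subsection \ref{ss2} already shows that $\bb Q^*$ is concentrated on absolutely continuous trajectories $\pi_t(du)=\rho(t,u)\,du$, with density bounded in $[0,1]$, satisfying the integral identity in the definition of weak solution of \eqref{g03} with $\Phi(\alpha)=\alpha+a\alpha^2$; the initial condition $\rho(0,\cdot)=\rho_0$ comes from the fact that the $\mu_N$ are associated to $\rho_0$ in the sense of \eqref{f09}. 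To promote this into a genuine weak solution I would combine Lemma \ref{s03} with Lemma \ref{energia}, concluding that both $\rho$ and $\Phi(\rho)$ belong to $L^2([0,T],H_{1,W}(\bb T^d))$. At this point $\rho$ satisfies the full definition of weak solution, so Lemma \ref{s12} applies and pins down $\rho$ uniquely, whence $\bb Q^*=\bb Q_W$.

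Finally, a relatively compact sequence all of whose convergent subsequences share the single limit $\bb Q_W$ must itself converge to $\bb Q_W$ in the uniform topology, which is the assertion. The main obstacle is not any isolated step but rather the verification that the limit point meets the \emph{complete} definition of weak solution; specifically, the regularity $\rho,\Phi(\rho)\in L^2([0,T],H_{1,W}(\bb T^d))$ is the delicate ingredient, and it is precisely here that the energy estimate of Lemma \ref{s03} and the equivalence between finite energy and membership in the Sobolev space (Lemma \ref{energia}) are indispensable. Without establishing this regularity the uniqueness result Lemma \ref{s12} could not be invoked, and the identification of all limit points with the single measure $\bb Q_W$ would fail.
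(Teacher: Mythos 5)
Your proposal is correct and follows essentially the same route as the paper: tightness (Lemma \ref{s06}), identification of every limit point with a weak solution of \eqref{g03} via the analysis of subsection \ref{ss2} together with the energy bounds of Lemmas \ref{s03} and \ref{energia}, and then uniqueness of weak solutions (subsection \ref{unicidade}) to conclude convergence of the full sequence. The paper's stated proof is merely a terser version of exactly this argument, so no further comparison is needed.
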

\begin{proof}
In the previous subsection, we showed that the sequence of probability
measures $\bb Q^{W,N}_{\mu_N}$ is tight for the uniform topology. Moreover, we
just proved that all limit points of this sequence are concentrated on
weak solutions of the parabolic equation \eqref{g03}. The proposition now follows
 from the uniqueness proved in subsection \ref{unicidade}.
\end{proof}

\begin{proof}[Proof of Theorem \ref{t02}]
Since $\bb Q_{\mu_N}^{W,N}$ converges in the uniform topology to $\bb
Q_{W}$, a measure which is concentrated on a deterministic path, for
each $0\le t\le T$ and each continuous function $H:\bb T^d\to \bb R$,
$\<\pi^N_t, H\>$ converges in probability to $\int_{\bb T^d} du
\rho(t,u)H(u)$, where $\rho$ is the unique weak solution of
\eqref{g03} with $l_k=0$, $r_k=1$, $\gamma=\rho_0$ and $\Phi(\alpha) =
\alpha + a \alpha^2$.
\end{proof}

\end{document}